\newtheorem{theorem}{Theorem}[section]
\newtheorem{lemma}[theorem]{Lemma}
\newtheorem{proposition}[theorem]{Proposition}
\newtheorem{corollary}[theorem]{Corollary}
\newtheorem{claim}[theorem]{Claim}
\newtheorem{fact}[theorem]{Fact}
\newenvironment{definition}[1][Definition]{\begin{trivlist}
\item[\hskip \labelsep {\bfseries #1}]}{\end{trivlist}}
\newenvironment{remark}[1][Remark]{\begin{trivlist}
\item[\hskip \labelsep {\bfseries #1}]}{\end{trivlist}}
\newenvironment{question}[1][Question]{\begin{trivlist}
\item[\hskip \labelsep {\bfseries #1}]}{\end{trivlist}}
\newcommand{\cf}{\mathrm{cf}}
\newcommand{\bb}{\mathbb}
\begin{document}
\title{Robust reflection principles}
\author{Chris Lambie-Hanson}
\address{Einstein Institute of Mathematics, Hebrew University of Jerusalem \\ Jerusalem, 91904, Israel}
\email{clambiehanson@math.huji.ac.il}
\thanks{This research was undertaken while the author was a Lady Davis Postdoctoral Fellow. The author would like to thank the Lady Davis Fellowship Trust and the Hebrew University of Jerusalem. The author would also like to thank Menachem Magidor for many helpful discussions.}
\begin{abstract}
	A cardinal $\lambda$ satisfies a property P \emph{robustly} if, whenever $\bb{Q}$ is a forcing poset and $|\bb{Q}|^+ < \lambda$, $\lambda$ satisfies P in $V^{\bb{Q}}$. We study the extent to which certain reflection properties of large cardinals can be satisfied robustly by small cardinals. We focus in particular on stationary reflection and the tree property, both of which can consistently hold but fail to be robust at small cardinals. We introduce natural strengthenings of these principles which are always robust and which hold at sufficiently large cardinals, consider the extent to which these strengthenings are in fact stronger than the original principles, and investigate the possibility of these strengthenings holding at small cardinals, particularly at successors of singular cardinals. 
\end{abstract}
\maketitle

\section{Introduction}

Large cardinal properties have, among others, the following two appealing attributes: they imply certain strong reflection properties, and they are robust under small forcing. The study of the extent to which reflection properties of large cardinals can hold at small cardinals, and in particular at successors of singular cardinals, has been a fruitful line of research in set theory. We continue this line here, adding the requirement of robustness under small forcing to these reflection properties and focusing in particular on stationary reflection and the tree property at successors of singular cardinals. 

\begin{definition}
	Let $P$ be a property that can hold of a cardinal $\lambda$. We say that $\lambda$ \emph{satisfies $P$ robustly} or that $\lambda$ \emph{has the robust property $P$} if, whenever $\bb{Q}$ is a forcing poset and $|\bb{Q}|^+ < \lambda$, $\lambda$ satisfies $P$ in $V^{\bb{Q}}$.
\end{definition}

\begin{remark}
	The requirement $|\bb{Q}|^+ < \lambda$, rather than the seemingly more natural $|\bb{Q}| < \lambda$, is necessary for our purposes in order to obtain consistent principles. If $\lambda = \mu^+$ and $\bb{Q} = \mathrm{Coll}(\omega, \mu)$, then $|\bb{Q}| < \lambda$ and, in $V^{\bb{Q}}$, $\lambda = \omega_1$ and therefore cannot satisfy, for example, stationary reflection or the tree property. 
\end{remark}

Most large cardinal properties are always robust. For example, if $\lambda$ satisfies the property ``is inaccessible," ``is weakly compact,"  ``is measurable," ``is strongly compact," ``is supercompact," etc., then, by an argument of Levy and Solovay (see \cite{levysolovay}), $\lambda$ satisfies the property robustly. Therefore, reflection principles, when they hold due to large cardinal properties, are themselves robust. Of particular interest to us are the following.

\begin{fact}
	Suppose $\lambda$ is weakly compact. Then $\lambda$ satisfies robust stationary reflection and the robust tree property.
\end{fact}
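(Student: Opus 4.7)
The plan is to reduce the statement to the classical Levy--Solovay theorem combined with the standard fact that weakly compact cardinals satisfy both stationary reflection and the tree property in ZFC.

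First, suppose $\lambda$ is weakly compact and let $\bb{Q}$ be a forcing poset with $|\bb{Q}|^+ < \lambda$; in particular, $|\bb{Q}| < \lambda$. I would then invoke the Levy--Solovay argument cited in the paragraph just before the Fact to conclude that $\lambda$ remains weakly compact in $V^{\bb{Q}}$. Since weak compactness provably implies both the tree property (indeed, this is one of the standard equivalent characterizations: every $\lambda$-tree has a cofinal branch) and stationary reflection (via $\Pi^1_1$-indescribability, every stationary $S \subseteq \lambda$ reflects to some $\alpha < \lambda$), both conclusions hold at $\lambda$ in $V^{\bb{Q}}$. Since $\bb{Q}$ was an arbitrary forcing of the prescribed size, robustness of each property at $\lambda$ follows.

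There is no real obstacle here: the statement is essentially an immediate corollary of Levy--Solovay, once one notes that stationary reflection and the tree property are ZFC consequences of weak compactness and therefore transfer to any generic extension in which $\lambda$ remains weakly compact. The only subtlety worth pointing out is that the definition of ``robust'' uses $|\bb{Q}|^+ < \lambda$, which is strictly stronger than the $|\bb{Q}| < \lambda$ required by Levy--Solovay; the implication is thus a fortiori. One might also remark that the use of $|\bb{Q}|^+ < \lambda$ (rather than $|\bb{Q}| < \lambda$) is not needed for this particular Fact, but becomes essential later when the two properties are considered at successor cardinals, as the Remark preceding the Fact foreshadows.
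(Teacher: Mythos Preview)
Your proposal is correct and matches the paper's approach exactly: the paper does not give a separate proof of this Fact but presents it immediately after observing that weak compactness (like other large cardinal properties) is robust by the Levy--Solovay argument, so that stationary reflection and the tree property, being consequences of weak compactness, are themselves robust. Your write-up simply makes this implicit reasoning explicit.
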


\begin{fact}
	Suppose $\mu$ is a singular limit of strongly compact cardinals and $\lambda = \mu^+$. Then $\lambda$ satisfies robust stationary reflection and the robust tree property.
\end{fact}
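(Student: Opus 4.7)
The plan is to leverage the classical theorems that, under the hypothesis that $\mu$ is a singular limit of strongly compact cardinals, $\lambda = \mu^+$ already satisfies stationary reflection and the tree property (the latter due to Magidor--Shelah). Rather than reproving these theorems relative to a forcing extension, I would show that the hypothesis itself is preserved by any forcing $\bb{Q}$ with $|\bb{Q}|^+ < \lambda$, and then apply the classical theorems inside $V^{\bb{Q}}$.

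To carry this out, fix $\bb{Q}$ with $|\bb{Q}|^+ < \lambda = \mu^+$, so in particular $|\bb{Q}| < \mu$. First, $\bb{Q}$ has the $|\bb{Q}|^+$-cc, so all cardinals and cofinalities $\geq |\bb{Q}|^+$ are preserved; in particular, $\mu$ and $\lambda$ remain cardinals in $V^{\bb{Q}}$ and $(\mu^+)^{V^{\bb{Q}}} = \lambda$. Next, fix a strictly increasing sequence $\langle \kappa_i : i < \cf(\mu) \rangle$ of strongly compact cardinals cofinal in $\mu$ with $\kappa_0 > |\bb{Q}|$. By the Levy--Solovay theorem, each $\kappa_i$ remains strongly compact in $V^{\bb{Q}}$, so $\mu$ is still a limit of strongly compact cardinals there. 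Since $\cf(\mu)^V < \mu$, $\mu$ also remains singular in $V^{\bb{Q}}$ (whether or not $\cf(\mu)$ itself is preserved).

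With the hypothesis reestablished in $V^{\bb{Q}}$, applying the classical theorems inside $V^{\bb{Q}}$ yields that every stationary subset of $\lambda$ reflects and that $\lambda$ has the tree property in $V^{\bb{Q}}$. As $\bb{Q}$ was arbitrary, $\lambda$ satisfies robust stationary reflection and the robust tree property. I do not anticipate any substantive obstacle: this is essentially a bookkeeping reduction to Levy--Solovay, based on the observation that ``being a singular limit of strongly compact cardinals'' is itself a robust large cardinal configuration, and the desired conclusions inherit robustness from the robustness of their hypothesis.
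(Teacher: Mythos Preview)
Your proposal is correct and matches the paper's own reasoning: the paper states this as a Fact without a detailed proof, but the preceding paragraph explicitly attributes robustness in such situations to the Levy--Solovay argument, which is precisely the reduction you carry out. Your only addition is making explicit the invocation of the classical results (stationary reflection at successors of singular limits of strongly compacts, and the Magidor--Shelah tree property theorem) inside $V^{\bb{Q}}$, which is exactly what the paper intends.
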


As we will see, though, these reflection principles need not be robust when they hold at small cardinals. We consider here natural strengthenings of reflection principles, in particular stationary reflection and the tree property, that are always robust, and investigate the extent to which they can hold at small cardinals and the extent to which they are true strengthenings of the more classical principles.

The general outline of the paper is as follows. In Section \ref{reflSect}, we consider robust stationary reflection. We show that this is equivalent to a natural condition studied by Cummings and the author in \cite{reflection} and that it is not in general equivalent to stationary reflection at inaccessible cardinals. The rest of the paper is devoted to the tree property and the strong system property. In Section \ref{systemSect}, we introduce the strong system property, a robust strengthening of the tree property that is equivalent to the tree property at inaccessible cardinals. In Section \ref{preservationSect}, we present some strengthenings of branch preservation lemmas for systems, due in their original form to Sinapova (see \cite{sinapova}). In Section \ref{collapsingSect}, we prove a technical lemma about systems in a generic extension by a product of Levy collapses. In Section \ref{weakSquareSect}, we show that fairly weak square principles imply the failure of the strong system property, and we provide a characterization of the robustness of having no special $\mu^+$-Aronszajn trees for infinite $\mu$. In Section \ref{downsection}, we adapt arguments of Fontanella and Magidor from \cite{fontanella} to show that the strong system property can consistently hold at $\aleph_{\omega^2 + 1}$. In Section \ref{separatingSect}, we show that we have some control over the extent of the failure of the strong system property at $\aleph_{\omega^2 + 1}$ and conclude with some open questions.

Our notation is, for the most part, standard. \cite{jech} is our standard reference for all undefined terms and notations. If $\kappa < \lambda$ are infinite cardinals, with $\kappa$ regular, then $S^\lambda_{\kappa} = \{\alpha < \lambda \mid \cf(\alpha) = \kappa \}$. $S^\lambda_{<\kappa}$, $S^\lambda_{\leq \kappa}$, etc. are given the natural meanings. By `inaccessible,' we always mean `strongly inaccessible.' If $R$ is a binary relation on a set $X$, we will often write $x <_R y$ in place of $(x,y) \in R$.

\section{Stationary reflection} \label{reflSect}

Recall the following definitions.

\begin{definition}
	Let $\lambda$ be a regular, uncountable cardinal, and let $S \subseteq \lambda$ be stationary.
	\begin{enumerate}
		\item{If $\alpha < \lambda$ and $\cf(\alpha) > \omega$, then $S$ \emph{reflects} at $\alpha$ if $S \cap \alpha$ is stationary in $\alpha$.}
		\item{$S$ \emph{reflects} if there is $\alpha < \lambda$ such that $S$ reflects at $\alpha$.}
		\item{$S$ \emph{reflects at arbitrarily high cofinalities} if, for every regular $\kappa < \lambda$, there is $\alpha < \lambda$ such that $\cf(\alpha) \geq \kappa$ and $S$ reflects at $\alpha$.}
		\item{\emph{$\mathrm{Refl}(\lambda)$} is the assertion that every stationary subset of $\lambda$ reflects.}
	\end{enumerate}
\end{definition}

The following proposition is easily proven (see \cite{reflection}, for example).

\begin{proposition}
	Suppose $\mathrm{Refl}(\aleph_{\omega + 1})$ holds. Then every stationary subset of $\aleph_{\omega + 1}$ reflects at arbitrarily high cofinalities.
\end{proposition}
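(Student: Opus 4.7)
The plan is to argue by contradiction. Fix a stationary $S \subseteq \aleph_{\omega+1}$, write $\lambda = \aleph_{\omega+1}$, and set $\mathrm{Tr}(T) = \{\alpha < \lambda : \cf(\alpha) > \omega \text{ and } T \cap \alpha \text{ is stationary in } \alpha\}$. Suppose toward contradiction that there is some $n < \omega$ with $\mathrm{Tr}(S) \cap S^\lambda_{\geq \aleph_n} = \emptyset$, equivalently $\mathrm{Tr}(S) \subseteq S^\lambda_{<\aleph_n}$. The specific feature of $\lambda = \aleph_{\omega+1}$ that drives the argument is that $S^\lambda_{<\aleph_n} = \bigcup_{k<n} S^\lambda_{\aleph_k}$ is a \emph{finite} union, so finite-union arguments preserve stationarity.

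The first step upgrades $\mathrm{Refl}(\lambda)$ to the following: $\mathrm{Tr}(T)$ is stationary whenever $T \subseteq \lambda$ is stationary. If not, pick a club $C$ disjoint from $\mathrm{Tr}(T)$, and let $C'$ be its club of limit points. Apply $\mathrm{Refl}(\lambda)$ to the stationary set $T \cap C'$ to obtain $\beta$ at which it reflects. Then $(T \cap C') \cap \beta$ stationary in $\beta$ gives $T \cap \beta$ stationary in $\beta$ (so $\beta \in \mathrm{Tr}(T)$) and $C' \cap \beta$ unbounded in $\beta$, whence $\beta \in C' \subseteq C$ by closedness; this contradicts $C \cap \mathrm{Tr}(T) = \emptyset$.

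The central lifting lemma is: whenever $R \subseteq \mathrm{Tr}(S) \cap S^\lambda_{\aleph_k}$ is stationary, $\mathrm{Tr}(S) \cap S^\lambda_{>\aleph_k}$ is stationary. By the first step, $\mathrm{Tr}(R)$ is stationary, and one shows $\mathrm{Tr}(R) \subseteq \mathrm{Tr}(S) \cap S^\lambda_{>\aleph_k}$. Fix $\beta \in \mathrm{Tr}(R)$. The stationarity of $R \cap \beta \subseteq S^\beta_{\aleph_k}$ rules out $\cf(\beta) \leq \aleph_k$: any continuous cofinal sequence in $\beta$ of length $\leq \aleph_k$ would yield a club in $\beta$ whose limit points all have cofinality $< \aleph_k$, disjoint from $S^\beta_{\aleph_k}$. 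To see $\beta \in \mathrm{Tr}(S)$, take any club $D \subseteq \beta$, let $D^*$ be its club of limit points, and pick $\alpha \in R \cap D^*$; since $\alpha \in R \subseteq \mathrm{Tr}(S)$ and $D \cap \alpha$ is a club in $\alpha$, we have $S \cap D \cap \alpha \neq \emptyset$, witnessing stationarity of $S \cap \beta$ in $\beta$.

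To conclude, $\mathrm{Tr}(S)$ is stationary by the first step, and under our hypothesis it equals the finite union $\bigcup_{k<n} \mathrm{Tr}(S) \cap S^\lambda_{\aleph_k}$, so some piece $\mathrm{Tr}(S) \cap S^\lambda_{\aleph_{k_1}}$ is stationary. The lifting lemma produces a stationary $\mathrm{Tr}(S) \cap S^\lambda_{>\aleph_{k_1}}$, which again splits as a finite union of fixed-cofinality pieces (using $\mathrm{Tr}(S) \subseteq S^\lambda_{<\aleph_n}$) and thus contains a stationary piece $\mathrm{Tr}(S) \cap S^\lambda_{\aleph_{k_2}}$ with $k_2 > k_1$. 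Iterating at most $n$ times produces a stationary subset of $\mathrm{Tr}(S) \cap S^\lambda_{\geq \aleph_n}$, contradicting $\mathrm{Tr}(S) \subseteq S^\lambda_{<\aleph_n}$. The main obstacle is the first step, which requires intersecting $T$ with the club of limit points $C'$ in just the right way so that the reflection point of $T \cap C'$ is forced to land inside $C$.
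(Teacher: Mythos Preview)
Your proof is correct. The paper does not actually supply its own proof of this proposition; it merely states that the result ``is easily proven'' and cites \cite{reflection}. Your argument is essentially the standard one found there: first upgrade $\mathrm{Refl}(\lambda)$ to the statement that $\mathrm{Tr}(T)$ is stationary for every stationary $T$, then use the elementary fact that a stationary set concentrating on $S^\lambda_{\aleph_k}$ can only reflect at points of cofinality $> \aleph_k$, and iterate finitely many times using that below $\aleph_\omega$ there are only finitely many possible cofinalities. One minor cosmetic point: in your first step you could simply intersect $T$ with $C$ rather than with $C'$, since if $(T\cap C)\cap\beta$ is stationary in $\beta$ then $C\cap\beta$ is unbounded in $\beta$ and closedness of $C$ already gives $\beta\in C$; passing to $C'$ is harmless but unnecessary.
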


Also, standard arguments yield that, if $\lambda$ is weakly compact, then every stationary subset of $\lambda$ reflects at arbitrarily high cofinalities. However, the situation is different in general. In \cite{reflection}, Cummings and the author show that, assuming the existence of sufficiently large cardinals, it is consistent that there is a singular cardinal $\mu > \aleph_\omega$ such that $\mathrm{Refl}(\mu^+)$ holds and there is a stationary subset of $\mu^+$ that does not reflect at arbitrarily high cofinalities. In \cite{reflection2}, the author extends this result to show that, assuming the existence of a proper class of supercompact cardinals, there is a class forcing extension in which, whenever $\mu > \aleph_\omega$ is a singular cardinal, $\mathrm{Refl}(\mu^+)$ holds and there is a stationary subset of $\mu^+$ that does not reflect at arbitrarily high cofinalities.

It turns out that this notion is closely related to robust stationary reflection.

\begin{theorem}
	Suppose $\lambda$ is a regular, uncountable cardinal. The following are equivalent.
	\begin{enumerate}
		\item{$\lambda$ satisfies robust stationary reflection.}
		\item{Every stationary subset of $\lambda$ reflects at arbitrarily high cofinalities.}
	\end{enumerate}
\end{theorem}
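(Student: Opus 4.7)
The plan is to treat the two directions separately via standard small-forcing and cofinality analysis.

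For $(2) \Rightarrow (1)$: Fix a forcing $\bb{Q}$ with $|\bb{Q}|^+ < \lambda$ and suppose $T$ is stationary in $\lambda$ in the generic extension $V[G]$, with $\bb{Q}$-name $\dot T$. For each $q \in \bb{Q}$ set $T_q = \{\alpha < \lambda \mid q \Vdash \alpha \in \dot T\} \in V$; then $T = \bigcup_{q \in G} T_q$ is a union of fewer than $\lambda$ many sets, so since $\lambda$ is regular a standard pigeonhole (intersect would-be witnessing clubs for the $T_q$ to get a club disjoint from $T$) produces some $q_0 \in G$ with $T_{q_0} \subseteq T$ stationary in $V[G]$, and hence stationary in $V$ by downward absoluteness. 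Applying (2) to $T_{q_0}$ with $\kappa = |\bb{Q}|^+$ yields $\alpha < \lambda$ with $\cf^V(\alpha) \geq |\bb{Q}|^+$ at which $T_{q_0}$ reflects in $V$. Since $\bb{Q}$ is $|\bb{Q}|^+$-c.c., both $\cf(\alpha)$ and the stationarity of $T_{q_0} \cap \alpha$ in $\alpha$ are preserved, so $T \cap \alpha \supseteq T_{q_0} \cap \alpha$ is stationary in $V[G]$ and $T$ reflects at $\alpha$.

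For $(1) \Rightarrow (2)$: Argue the contrapositive. Assume stationary $S \subseteq \lambda$ and regular $\kappa < \lambda$ witness the failure of (2), so $S$ reflects at no $\alpha$ with $\cf(\alpha) \geq \kappa$. If $\lambda = \kappa^+$, then $S^\lambda_\kappa$ is already a stationary non-reflecting subset of $\lambda$: any $\alpha < \kappa^+$ has $\cf(\alpha) \leq \kappa$, and the limit points of any club in $\alpha$ of that order type all have cofinality strictly less than $\kappa$, so no such $\alpha$ is a reflection point of $S^\lambda_\kappa$; hence (1) fails in $V$ itself. Otherwise $\kappa^+ < \lambda$, and I propose forcing with $\bb{Q} = \mathrm{Coll}(\omega, <\kappa)$, which has size $\kappa$, is $\kappa^+$-c.c., and collapses every cardinal below $\kappa$ to $\omega$. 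Then $|\bb{Q}|^+ < \lambda$, $S$ remains stationary, and every $\alpha < \lambda$ of uncountable cofinality in $V^{\bb{Q}}$ must have had $\cf^V(\alpha) \geq \kappa$; by hypothesis a $V$-club $C \subseteq \alpha$ with $C \cap S = \emptyset$ exists, and since closedness and unboundedness are absolute, $C$ is still a club of $\alpha$ in $V^{\bb{Q}}$, witnessing the non-reflection of $S$ at $\alpha$. Hence $S$ is a stationary non-reflecting set in $V^{\bb{Q}}$ and (1) fails.

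The main point requiring care is the cofinality analysis of the Levy collapse, specifically the middle case $\cf^V(\alpha) = \kappa$, where $\kappa$ has become $\omega_1^{V^{\bb{Q}}}$ but the ground model club $C$ remains a club of $\alpha$ of order type $\omega_1^{V^{\bb{Q}}}$; this is immediate from absoluteness of closure and unboundedness, and I would spell it out explicitly. The other ingredients---the pigeonhole/downward absoluteness step in the first direction, and preservation of stationary subsets of $\lambda$ under $\lambda$-c.c.\ forcing in the second---are standard.
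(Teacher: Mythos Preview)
Your proposal is correct and follows essentially the same approach as the paper. The only cosmetic differences are that the paper uses $\mathrm{Coll}(\omega,\kappa)$ rather than $\mathrm{Coll}(\omega,<\kappa)$, disposes of the case $\lambda=\kappa^+$ via an upfront observation that both (1) and (2) force $\lambda$ to be weakly inaccessible or the successor of a singular, and in the $(2)\Rightarrow(1)$ direction works with names in $V$ rather than passing to $V[G]$.
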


\begin{proof}
	First note that, if $\lambda = \kappa^+$ and $\kappa$ is regular, then $S^{\lambda}_\kappa$ is a non-reflecting stationary subset of $\lambda$. Hence, both (1) and (2) imply that $\lambda$ is either weakly inaccessible or the successor of a singular cardinal. In particular, if $\kappa < \lambda$ is regular, then $\kappa^+ < \lambda$.

	(1) $\Rightarrow$ (2): Assume (1) holds. Suppose for sake of contradiction that $S \subseteq \lambda$ is stationary, $\kappa < \lambda$ is regular, and $S$ does not reflect at any ordinal in $S^{\lambda}_{\geq \kappa}$. Let $\bb{P} = \mathrm{Coll}(\omega, \kappa)$. $|\bb{P}| = \kappa$, so $|\bb{P}|^+ < \lambda$. In particular, $\bb{P}$ has the $\lambda$-c.c., so $S$ remains stationary in $V^{\bb{P}}$. Also, if $\alpha < \lambda$ and $\cf^{V^{\bb{P}}}(\alpha) > \omega$, then $\cf^V(\alpha) > \kappa$. Since $S$ does not reflect at any ordinal in $S^{\lambda}_{\geq \kappa}$ in $V$, there is a club $C_\alpha$ in $\alpha$ such that $C_\alpha \cap S = \emptyset$. $C_\alpha$ still witnesses that $S$ does not reflect at $\alpha$ in $V^{\bb{P}}$, so $S$ is a non-reflecting stationary subset of $\lambda$ in $V^{\bb{P}}$, contradicting (1).

	(2) $\Rightarrow$ (1): Assume (2) holds. Suppose for sake of contradiction that $|\bb{P}|$ is a forcing poset, $|\bb{P}|^+ < \lambda$, $p \in \bb{P}$, and $\dot{S}$ is a $\bb{P}$-name such that $p \Vdash ``\dot{S}$ is a non-reflecting stationary subset of $\lambda$." For all $q \leq p$, let $S_q = \{\eta < \lambda \mid q \Vdash ``\eta \in \dot{S}" \}$. Since $p \Vdash ``\dot{S} \subseteq \bigcup_{q \leq p} S_q"$ and $|\bb{P}| < \lambda$, there must be $q \leq p$ such that $S_q$ is stationary in $\lambda$. Fix such a $q$. By (2), we may find $\alpha \in S^\lambda_{\geq |\bb{P}|^+}$ such that $S_q$ reflects at $\alpha$. Since $\bb{P}$ trivially has the $|\bb{P}|^+$-c.c., $S_q \cap \alpha$ remains stationary in $V^{\bb{P}}$. But then, since $q \Vdash ``S_q \subseteq \dot{S}"$, we have $q \Vdash ``\dot{S}$ reflects at $\alpha,"$ which is a contradiction.
\end{proof}

The next result shows that robust stationary reflection is not necessarily equivalent to stationary reflection, even for inaccessible cardinals.

\begin{theorem}
	Suppose there is an inaccessible limit of supercompact cardinals. Then there is a forcing extension with an inaccessible cardinal $\lambda$ such that $\mathrm{Refl}(\lambda)$ holds but there is a stationary $S \subseteq S^\lambda_\omega$ that does not reflect at any ordinal in $S^\lambda_{>\aleph_\omega}$.
\end{theorem}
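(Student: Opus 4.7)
The plan is to produce the extension in two stages: first a preparation $\bb{P}$ that makes every supercompact cardinal below $\lambda$ Laver-indestructible, and then a Jensen-style forcing $\bb{Q}$ that adds the desired non-reflecting stationary set. The preparation is necessary because $\bb{Q}$ will have size $\lambda$, so the Levy--Solovay theorem cannot be used directly to preserve supercompactness of any single $\kappa<\lambda$; I will instead rely on Laver indestructibility via the $\aleph_{\omega+1}$-directed closure of $\bb{Q}$.

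Let $\langle \kappa_\alpha : \alpha < \lambda\rangle$ enumerate the supercompact cardinals of $V$ in $(\aleph_\omega,\lambda)$, which are cofinal in $\lambda$ by hypothesis. Let $\bb{P}$ be the reverse Easton iteration which, at stage $\alpha$, forces with the Laver preparation of $\kappa_\alpha$. By standard arguments, $|\bb{P}|=\lambda$, $\lambda$ remains inaccessible in $V':=V^{\bb{P}}$, and each $\kappa_\alpha$ is, in $V'$, indestructibly supercompact under further $\kappa_\alpha$-directed-closed forcing of any size. In $V'$, define $\bb{Q}$ to have conditions $(s,\vec C)$ where $s:\alpha+1\to 2$ for some $\alpha<\lambda$, $s(\beta)=1$ implies $\cf(\beta)=\omega$, and $\vec C=\langle C_\gamma : \gamma\leq\alpha,\ \cf(\gamma)>\aleph_\omega\rangle$ with each $C_\gamma$ a club in $\gamma$ disjoint from $s^{-1}(\{1\})$; order by end-extension of $s$ preserving $\vec C$.

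I will verify that $\bb{Q}$ is ${<}\lambda$-directed-closed and has the $\lambda$-c.c.\ by a standard $\Delta$-system argument exploiting inaccessibility, so $\lambda$ remains inaccessible in $V'':=V'^{\bb{Q}}$ and no cardinals or cofinalities below $\lambda$ change. A density argument---given a $V'$-club $D\subseteq\lambda$, extend any condition by choosing $\beta\in D\cap S^\lambda_\omega$ above its current top, setting $s(\beta)=1$, and assigning $C_\gamma\subseteq(\mathrm{old\ top},\gamma)$ to each intermediate $\gamma$ with $\cf(\gamma)>\aleph_\omega$---together with the $\lambda$-c.c.\ shows the generic $S:=\{\beta<\lambda : \exists (s,\vec C)\in G,\ s(\beta)=1\}$ is a stationary subset of $S^\lambda_\omega$; the generic $C_\gamma$ then witnesses non-reflection of $S$ at each $\gamma\in S^\lambda_{>\aleph_\omega}$. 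Since $\bb{Q}$ is in particular $\aleph_{\omega+1}$-directed-closed, each $\kappa_\alpha$ remains $\lambda$-supercompact in $V''$ by indestructibility. For any stationary $T\subseteq\lambda$ in $V''$ I fix such a $\kappa:=\kappa_\alpha$ and take a $\lambda$-supercompactness embedding $j:V''\to M$ with $\mathrm{crit}(j)=\kappa$ and $M^\lambda\subseteq M$; then $j\restriction\lambda$ is continuous, $j[\lambda]$ is club in $\sup j[\lambda]$, and pulling $M$-clubs back by $j$ (using continuity and closure of $j[\lambda]$) shows $j[T]$ is stationary there. Since $j[T]\subseteq j(T)$ and $\cf^M(\sup j[\lambda])=\lambda>\omega$, $j(T)$ reflects in $M$ at $\sup j[\lambda]<j(\lambda)$, and by elementarity $T$ reflects at some ordinal below $\lambda$ in $V''$.

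The main technical obstacle is the closure verification for $\bb{Q}$ at cofinalities $>\aleph_\omega$. Given a decreasing sequence $\langle (s_\xi,\vec C_\xi) : \xi<\delta\rangle$ with $\cf(\delta)>\aleph_\omega$ and $\mathrm{top}(s_\xi)=\beta_\xi$, one forms the limit $s:=\bigcup_\xi s_\xi\cup\{(\beta,0)\}$ at $\beta:=\sup_\xi\beta_\xi$ and must supply a club $C_\beta$ in $\beta$ disjoint from $s^{-1}(\{1\})$. The prescription $C_\beta:=\{\beta_\xi : \xi<\delta \text{ is a continuity point of } \xi\mapsto\beta_\xi,\ \cf(\xi)>\omega\}$ is closed and unbounded in $\beta$, and at any such $\beta_\xi$ one has $\cf(\beta_\xi)=\cf(\xi)>\omega$, which by the constraint $s(\beta)=1\Rightarrow\cf(\beta)=\omega$ forces $s(\beta_\xi)=0$ as required.
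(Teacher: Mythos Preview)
Your approach has a fatal gap: the poset $\bb{Q}$ is \emph{not} ${<}\lambda$-closed (it is not even $\aleph_{\omega+1}$-closed), so the appeal to Laver indestructibility collapses and you lose the verification of $\mathrm{Refl}(\lambda)$.

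To see the failure of closure, build a decreasing sequence of length $\aleph_{\omega+1}$ as follows. For $\xi<\aleph_{\omega+1}$ let $s_\xi:\xi+1\to 2$ be defined by $s_\xi(\zeta)=1$ iff $\cf(\zeta)=\omega$, and let $\vec C_\xi$ be empty (there is no $\gamma\leq\xi$ with $\cf(\gamma)>\aleph_\omega$). Each $(s_\xi,\vec C_\xi)$ is a condition and the sequence is decreasing. A lower bound would have top $\beta\geq\aleph_{\omega+1}$ and would have to supply a club $C_{\aleph_{\omega+1}}\subseteq\aleph_{\omega+1}$ disjoint from $s^{-1}(\{1\})=S^{\aleph_{\omega+1}}_\omega$; but $S^{\aleph_{\omega+1}}_\omega$ is stationary, so no such club exists. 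Your proposed $C_\beta=\{\beta_\xi:\xi\text{ a continuity point},\ \cf(\xi)>\omega\}$ is not closed in general: a countable limit of such $\beta_\xi$'s is $\beta_\eta$ for some continuity point $\eta$ of cofinality $\omega$, and nothing prevents $s(\beta_\eta)=1$. The poset $\bb{Q}$ is only ${<}\lambda$-\emph{strategically} closed (player~II can arrange $s(\mathrm{top})=0$ at her moves and use her tops as $C_\beta$), but strategic closure does not suffice for Laver indestructibility, which requires genuine $\kappa$-directed closure. Without indestructibility you have no argument that any $\kappa_\alpha$ remains supercompact in $V''$, and your reflection argument evaporates.

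There are two secondary errors as well. First, $\bb{Q}$ does not have the $\lambda$-c.c.: for each $\alpha\in S^\lambda_\omega$ the condition $(s_\alpha,\vec C_\alpha)$ with $s_\alpha^{-1}(\{1\})=\{\alpha\}$ gives an antichain of size $\lambda$, so your stationarity argument (which invokes the $\lambda$-c.c.\ to reduce to ground-model clubs) needs to be replaced by a genericity argument against \emph{names} for clubs. Second, your claim that $j\restriction\lambda$ is continuous is false at ordinals of cofinality $\geq\kappa$; the standard fix is to first thin $T$ to a single cofinality $\mu$ and then choose $\kappa=\kappa_\alpha>\mu$, so that $j$ is continuous on $T$.

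The paper avoids the closure obstacle by a genuinely different route: it first performs a full-support collapse iteration making $\lambda$ the least inaccessible, then shoots a club through the approachable ordinals below $\lambda$, and only then adds the non-reflecting set. The approachability step is exactly what supplies, at every limit stage, the clubs needed to witness that the partial generic set is non-stationary in each initial segment; this is what your direct construction lacks. The argument that $\mathrm{Refl}(\lambda)$ survives is then carried out (as in the cited earlier work) using the specific structure of the collapse iteration rather than via blanket indestructibility.
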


\begin{proof}
	The proof largely follows the proof of Theorem 3.1 in \cite{reflection}, so we omit many of the details. Let $\lambda$ be the least inaccessible limit of supercompact cardinals. In particular, $\lambda$ is not Mahlo. Let $\langle \kappa_i \mid i < \lambda \rangle$ be an increasing, continuous sequence of cardinals such that:
	\begin{itemize}
		\item{$\kappa_0 = \omega$;}
		\item{if $i = 0$ or $i$ is a limit ordinal, $\kappa_{i+1} = \kappa_i^+$;}
		\item{if $i$ is a successor ordinal, $\kappa_{i+1}$ is supercompact;}
		\item{$\sup(\{\kappa_i \mid i < \lambda\}) = \lambda$.}
	\end{itemize}
	We first define a forcing iteration $\langle \bb{P}_i, \dot{\bb{Q}}_j \mid i \leq \lambda, j < \lambda \rangle$, taken with full supports, as follows. If $i=0$ or $i$ is a limit ordinal, then $\Vdash_{\bb{P}_i} ``\dot{\bb{Q}}_i$ is trivial." If $i$ is a successor ordinal, then $\Vdash_{\bb{P}_i}``\dot{\bb{Q}}_i = \mathrm{Coll}(\kappa_i, < \kappa_{i+1})."$ Let $\bb{P} = \bb{P}_\lambda$. In $V^{\bb{P}}$, $\lambda$ is the least inaccessible cardinal and, for all $i < \lambda$, $\kappa_i = \aleph_i$. In $V^{\bb{P}}$, let $\vec{a} = \langle a_\alpha \mid \alpha < \lambda \rangle$ be an enumeration of all bounded subsets of $\lambda$, and let $\bb{A}$ be the forcing to shoot a club through the set of ordinals below $\lambda$ that are approachable with respect to $\vec{a}$. In $V^{\bb{P} * \dot{\bb{A}}}$, let $\bb{S}$ be the poset whose conditions are of the form $s = (\gamma^s, x^s)$ such that:
	\begin{itemize}
		\item{$\gamma^s < \lambda$;}
		\item{$x^s \subseteq (\gamma^s + 1) \cap \mathrm{cof}(\omega)$;}
		\item{for all $\beta \in S^\lambda_{\geq \aleph_\omega}$, $x^s \cap \beta$ is not stationary in $\beta$.}
	\end{itemize}
	If $s,t \in \bb{S}$, then $t \leq s$ iff $\gamma^t \geq \gamma^s$ and $x^t \cap (\gamma^s + 1) = x^s$.

	If $G*H*I$ is generic for $\bb{P} * \dot{\bb{A}} * \dot{\bb{S}}$, then $V[G*H*I]$ is the desired model, with $S = \bigcup_{s \in I} x^s$ being the witnessing stationary subset of $S^\lambda_\omega$ not reflecting at any ordinal in $S^\lambda_{>\aleph_\omega}$. The verification is as in \cite{reflection} and is thus omitted.
\end{proof}

\section{Systems} \label{systemSect}

\begin{definition}
	Let $R$ be a binary relation on a set $X$.
	\begin{itemize}
		\item{If $a,b \in X$, then $a$ and $b$ are \emph{$R$-comparable} if $a = b$, $a <_R b$, or $b <_R a$. Otherwise, $a$ and $b$ are \emph{$R$-incomparable}, which is denoted $a \perp_R b$.}
		\item{$R$ is \emph{tree-like} if, for all $a,b,c \in X$, if $a <_R c$ and $b <_R c$, then $a$ and $b$ are $R$-comparable.}
	\end{itemize}
\end{definition}

\begin{definition}
	Let $\lambda$ be an infinite, regular cardinal. $S = \langle \{\{\alpha\} \times \kappa_\alpha \mid \alpha \in I\}, \mathcal{R} \rangle$ is a \emph{$\lambda$-system} if:
	\begin{enumerate}
		\item{$I \subseteq \lambda$ is unbounded and, for all $\alpha \in I$, $0 < \kappa_\alpha < \lambda$. We sometimes identify $S$ with $\{\{\alpha\} \times \kappa_\alpha \mid \alpha \in I \}$. For each $\alpha \in I$, we say that $S_\alpha := \{\alpha\} \times \kappa_\alpha$ is the $\alpha^{\mathrm{th}}$ level of $S$;}
		\item{$\mathcal{R}$ is a set of binary, transitive, tree-like relations on $S$ and $|\mathcal{R}| < \lambda$;}
		\item{for all $R \in \mathcal{R}$, $\alpha_0, \alpha_1 \in I$, $\beta_0 < \kappa_{\alpha_0}$, and $\beta_1 < \kappa_{\alpha_1}$, if $(\alpha_0, \beta_0) <_R (\alpha_1,\beta_1)$, then $\alpha_0 < \alpha_1$;}
		\item{for all $\alpha_0 < \alpha_1$, both in $I$, there are $\beta_0 < \kappa_{\alpha_0}$, $\beta_1 < \kappa_{\alpha_1}$, and $R \in \mathcal{R}$ such that $(\alpha_0, \beta_0) <_R (\alpha_1, \beta_1)$.}
	\end{enumerate}

	If $S = \langle \{\{\alpha\} \times \kappa_\alpha \mid \alpha \in I \}, \mathcal{R} \rangle$ is a $\lambda$-system, then we define $\mathrm{width}(S) = \max(\sup(\{\kappa_\alpha \mid \alpha \in I\}), |\mathcal{R}|)$ and $\mathrm{height}(S) = \lambda$. $S$ is a \emph{narrow $\lambda$-system} if $\mathrm{width}(S)^+ < \lambda$.

	$S$ is a \emph{strong $\lambda$-system} if it satisfies the following strengthening of (4):
	\begin{enumerate}
		\item[($4'$)]{for all $\alpha_0 < \alpha_1$, both in $I$, and for every $\beta_1 < \kappa_{\alpha_1}$, there are $\beta_0 < \kappa_{\alpha_0}$ and $R \in \mathcal{R}$ such that $(\alpha_0, \beta_0) <_R (\alpha_1,\beta_1)$.}
	\end{enumerate}

	If $R \in \mathcal{R}$, a \emph{branch of $S$ through $R$} is a set $b \subset S$ such that for all $a_0, a_1 \in b$, $a_0$ and $a_1$ are $R$-comparable. $b$ is a \emph{cofinal branch} if, for unboundedly many $\alpha \in I$, $b \cap S_\alpha \not= \emptyset$.
\end{definition}

\begin{remark}
	In previous presentations of systems (e.g. \cite{magidorshelah} and \cite{sinapova}), $\lambda$-systems were typically considered only for successor cardinals $\lambda$, and it was assumed that all $\lambda$-systems were of the form $\langle I \times \kappa, \mathcal{R} \rangle$, i.e. that all levels of the system were of the same width. If $\lambda$ is a successor cardinal and $S = \langle \{\{\alpha\} \times \kappa_\alpha \mid \alpha \in I \}, \mathcal{R} \rangle$ is a $\lambda$-system, or if $\kappa$ is weakly inaccessible and $S$ is a narrow $\lambda$-system, then there is an unbounded $J \subseteq I$ and a $\kappa < \lambda$ such that, for all $\alpha \in J$, $\kappa_\alpha = \kappa$. It will then be sufficient for us to work with subsystems of the form $\langle J \times \kappa, \mathcal{R} \rangle$, so, in the case that $\lambda$ is a successor cardinal, we will assume our systems are of this form (and typically, we will in fact have $\lambda = \kappa^+$). If $\lambda$ is weakly inaccessible and we do not want to assume narrowness, though, our more general notion of system seems to us to be the correct notion to work with.  
\end{remark}

\begin{proposition}
	Suppose $\mu$ is an infinite cardinal. Then there is a strong $\mu^+$-system $S = \langle I \times \kappa, \mathcal{R} \rangle$ such that $|\mathcal{R}| = \mu$ and $S$ has no cofinal branch.
\end{proposition}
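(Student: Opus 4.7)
The plan is to construct the system explicitly with $\kappa = \mu$, $I = [\mu,\mu^+)$, and $\mathcal{R} = \{R_\delta : \delta < \mu\}$. For each $\alpha \in I$ and $\delta < \mu$, fix an injection $f^\delta_\alpha : \mu \to \alpha$ (possible since $|\alpha| = \mu$), and define
\[
(\beta,\xi) <_{R_\delta} (\alpha,\eta) \iff \beta < \alpha \text{ and } f^\delta_\alpha(\eta) = f^\delta_\beta(\xi).
\]
Condition (3) is built in, and transitivity and tree-likeness of each $R_\delta$ follow immediately from the injectivity of the $f^\delta_\alpha$: two elements lying $R_\delta$-below a common upper bound must share a single ``thread value'' $\gamma = f^\delta_\alpha(\eta)$ at the top, forcing each lower $\xi$-coordinate to equal $(f^\delta_\beta)^{-1}(\gamma)$, so the lower elements are $R_\delta$-comparable.

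What remains is to choose the $f^\delta_\alpha$ so that $(4')$ holds and no $R_\delta$ admits a cofinal branch. Condition $(4')$ amounts to requiring that for every $\alpha_0 < \alpha_1$ in $I$ and every $\beta_1 < \mu$ there is a $\delta$ with $f^\delta_{\alpha_1}(\beta_1) \in \mathrm{range}(f^\delta_{\alpha_0})$. By tree-likeness, any cofinal $R_\delta$-branch is determined by a fixed ordinal $\gamma^* < \mu^+$ lying in $\mathrm{range}(f^\delta_\alpha)$ for unboundedly many $\alpha$, so ruling out cofinal branches reduces to arranging that for every $(\gamma,\delta) \in \mu^+ \times \mu$ the set $\{\alpha \in I : \gamma \in \mathrm{range}(f^\delta_\alpha)\}$ is bounded in $\mu^+$.

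I secure both properties by a recursion on $\alpha_1 \in I$ with bookkeeping. At stage $\alpha_1$ there are only $|\alpha_1|\cdot\mu = \mu$ pairs $(\alpha_0,\beta_1)$ demanding $(4')$-witnesses, and these can be distributed injectively among the $\mu$ colors so that each $\delta$ carries at most one forced value for $f^\delta_{\alpha_1}$. Using a fixed bijection $\mu^+ \times \mu \to \mu^+$, we pre-assign a ``retirement stage'' $\alpha^\ast(\gamma,\delta)$ to each pair and ban each $\gamma$ from appearing in $\mathrm{range}(f^\delta_\alpha)$ once $\alpha \geq \alpha^\ast(\gamma,\delta)$, which directly implies the branch condition.

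The main obstacle is showing the recursion never gets stuck: at each stage $\alpha_1$ and each $\delta$, the single forced value from $(4')$ must be selectable from the $\mu$-sized set $\mathrm{range}(f^\delta_{\alpha_0^\ast})$ while avoiding the cumulative ban list, and the remaining values of $f^\delta_{\alpha_1}$ must extend to an injection into $\alpha_1$ also avoiding the ban. This reduces to a cardinal arithmetic count balancing the size $\mu$ of each old range against the growing ban list, and is managed by ordering the retirement bijection so that the ban list intersected with any such fixed range always remains a proper subset. Verifying this bookkeeping, and the compatibility of the forced values across the various $(\alpha_0,\beta_1)$ requests, is the delicate combinatorial core of the proof.
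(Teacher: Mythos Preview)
Your setup is fine: with injections $f^\delta_\alpha:\mu\to\alpha$ and the relation $(\beta,\xi)<_{R_\delta}(\alpha,\eta)\iff f^\delta_\alpha(\eta)=f^\delta_\beta(\xi)$, transitivity and tree-likeness are immediate, and you correctly identify that a cofinal $R_\delta$-branch corresponds to a single ordinal $\gamma$ lying in $\mathrm{range}(f^\delta_\alpha)$ for unboundedly many $\alpha$.

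The plan breaks down, however, at exactly the place you flag as ``the delicate combinatorial core.'' The retirement scheme you describe is inconsistent with condition $(4')$. Take $\alpha_0=\mu$, the least element of $I$. For every $\delta<\mu$ we have $\mathrm{range}(f^\delta_\mu)\subseteq\mu$, so only pairs $(\gamma,\delta)\in\mu\times\mu$ are relevant. There are $\mu$ such pairs, and each has a pre-assigned retirement stage $\alpha^\ast(\gamma,\delta)<\mu^+$; by the regularity of $\mu^+$, the supremum $\beta^\ast=\sup\{\alpha^\ast(\gamma,\delta):\gamma,\delta<\mu\}$ is below $\mu^+$. Now fix any $\alpha_1>\beta^\ast$ and any $\beta_1<\mu$. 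Whatever color $\delta$ you assign to the pair $(\mu,\beta_1)$, the forced value $f^\delta_{\alpha_1}(\beta_1)$ must lie in $\mathrm{range}(f^\delta_\mu)\subseteq\mu$, yet every $\gamma<\mu$ is already retired for $\delta$ at stage $\alpha_1$, so $f^\delta_{\alpha_1}(\beta_1)$ would violate your ban. No choice of bijection or ordering can repair this: a $\mu$-sized set of retirement stages is always bounded in $\mu^+$. Thus the recursion cannot get past the least level and simultaneously satisfy $(4')$ and the ban.

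By contrast, the paper's argument avoids all bookkeeping by running the injections the other way. Fix for each $\alpha<\mu^+$ an injection $f_\alpha:\alpha\to\mu$, take width $2$, and for $i<2$, $\eta<\mu$ declare $(\alpha,1-i)<_{R^i_\eta}(\beta,i)$ iff $\alpha<\beta$ and $f_\beta(\alpha)=\eta$. This is trivially a strong $\mu^+$-system with $\mu$ relations, and no $R^i_\eta$ admits a chain of length $3$ (the second coordinate of the top of any edge is $i$, of the bottom is $1-i$), so there is certainly no cofinal branch. The entire construction fits in a few lines; you should look for a system in which branches are killed \emph{locally} rather than by a global retirement schedule.
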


We provide two simple and quite different proofs of this proposition.

\begin{proof}[Proof 1]
	We define a system $S = \langle \mu^+ \times 2, \mathcal{R} \rangle$, where $\mathcal{R} = \{R^i_\eta \mid i < 2, \eta < \mu \}$. For each $\alpha < \mu^+$, let $f_\alpha : \alpha \rightarrow \mu$ be injective. Fix $i < 2$ and $\eta < \mu$, and suppose $\alpha < \beta < \mu^+$ and $k_\alpha, k_\beta < 2$. Then $(\alpha, k_\alpha) <_{R^i_\eta} (\beta, k_\beta)$ if and only if:
	\begin{itemize}
		\item{$k_\beta = i$;}
		\item{$k_\alpha = 1-i$;}
		\item{$f_\beta(\alpha) = \eta$.}
	\end{itemize}
	The statement that each $R^i_\eta$ is transitive and tree-like is vacuously true, and it is easily verified that this defines a strong system. $S$ does not even have a branch of length 3, so it certainly does not have a cofinal branch.
\end{proof}

\begin{proof}[Proof 2]
	If $\mu = \omega$, then there is a $\mu^+$-Aronszajn tree, which is a strong $\mu^+$ system with 1 relation and no cofinal branch. If $\mu > \omega$, let $\bb{P} = \mathrm{Coll}(\omega, \mu)$. In $V^{\bb{P}}$, $\mu^+ = \omega_1$, so there is a $\mu^+$-Aronszajn tree. Let $\dot{T}$ be a $\bb{P}$-name for a $\mu^+$-Aronszajn tree. Without loss of generality, the underlying set of $\dot{T}$ is forced to be $\mu^+ \times \omega$. 

	In $V$, we define a system $S = \langle \mu^+ \times \omega, \mathcal{R} \rangle$, where $\mathcal{R} = \{R_p \mid p \in \bb{P}\}$. If $p \in \bb{P}$, $\alpha < \beta < \mu^+$, and $n_\alpha, n_\beta < \omega$, then let $(\alpha,n_\alpha) <_{R_p} (\beta, n_\beta)$ if and only if $p \Vdash ``(\alpha,n_\alpha) <_{\dot{T}} (\beta,n_\beta)."$ It is easily verified that $S$ is a strong $\mu^+$-system. If $S$ had a cofinal branch, there would be an unbounded set $I \subseteq \mu^+$ and a condition $p \in \bb{P}$ such that, for every $\alpha \in I$, there is $n_\alpha < \omega$ such that, whenever $\alpha < \beta$ are both in $I$, $p \Vdash ``(\alpha,n_\alpha) <_{\dot{T}} (\beta, n_\beta)."$ But then $p$ forces that the downward closure of $\{(\alpha, n_\alpha) \mid \alpha \in I\}$ is a cofinal branch in $\dot{T}$, contradicting the fact that $\dot{T}$ is a name for an Aronszajn tree.
\end{proof}

\begin{definition}
	Let $\lambda$ be a regular cardinal. $\lambda$ satisfies the \emph{strong system property} if, whenever $S = \langle I \times \kappa, \mathcal{R} \rangle$ is a strong system and $|\mathcal{R}|^+ < \lambda$, $S$ has a cofinal branch.
\end{definition}

\begin{remark}
	Note that, if $\lambda$ is a regular cardinal, then a $\lambda$-tree $(T, <_T)$ can be viewed as a strong $\lambda$-system with 1 relation. Thus, if $\lambda$ satisfies the strong system property, then $\lambda$ also satisfies the tree property.
\end{remark}

In Section \ref{reflSect}, we saw that robust stationary reflection is equivalent to the property that every stationary set reflects at arbitrarily high cofinalities. It is not clear that we have an exactly analogous situation here with the robust tree property and the strong system property. We do, however, have the following.

\begin{proposition} \label{robustsystemproperty}
	Suppose $\lambda$ is a regular, uncountable cardinal. The following are equivalent.
	\begin{enumerate}
		\item{Every strong $\lambda$-system with only countably many relations has a cofinal branch, and this property is robust under small forcing.}
		\item{$\lambda$ satisfies the strong system property.}
	\end{enumerate}
\end{proposition}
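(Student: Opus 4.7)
The plan is to prove each direction by a small collapse together with a transfer between $V$ and the extension. The direction (2) $\Rightarrow$ (1) has trivial content in $V$ itself; the content is robustness. Let $\bb{Q}$ be a forcing with $|\bb{Q}|^+ < \lambda$, let $q \in \bb{Q}$ force that $\dot{S} = \langle \dot{I} \times \check{\kappa}, \{\dot{R}_n \mid n < \omega\} \rangle$ is a strong $\lambda$-system, and code $\dot{S}$ back into $V$. First, I will find by pigeonhole a condition $p_* \leq q$ and an unbounded $J_* \subseteq \lambda$ in $V$ such that $p_* \Vdash \check{J}_* \subseteq \dot{I}$: for each $\alpha$ in the unbounded set $\{\alpha < \lambda : \exists r \leq q, \ r \Vdash \check{\alpha} \in \dot{I}\}$, pick a witness $p_\alpha \leq q$, and use $|\bb{Q}| < \lambda$ and the regularity of $\lambda$ to find $p_*$ occurring unboundedly often. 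In $V$, define $T = \langle J_* \times \kappa, \{R'_{n, p} \mid n < \omega, \ p \leq p_*\} \rangle$ by $(\alpha_0, \beta_0) <_{R'_{n, p}} (\alpha_1, \beta_1)$ iff $p \Vdash (\check{\alpha}_0, \check{\beta}_0) <_{\dot{R}_n} (\check{\alpha}_1, \check{\beta}_1)$. Transitivity and tree-likeness of each $R'_{n,p}$ follow immediately from the corresponding properties of $\dot{R}_n$, and the strong system condition for $T$ holds because $p_*$ forces any pair from $J_*$ into $\dot{I}$, so the strong system condition of $\dot{S}$ can be witnessed below $p_*$. Since $|\mathcal{R}(T)|^+ \leq |\bb{Q}|^+ < \lambda$, (2) yields a cofinal branch $b$ of $T$; reading off the defining $R'_{n, p}$, the condition $p$ forces $b$ to be a cofinal branch of $\dot{S}$, and a density argument completes this direction.

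For (1) $\Rightarrow$ (2), I apply (1) in a small generic extension and preserve the branch. Let $S = \langle I \times \kappa, \mathcal{R} \rangle$ be a strong $\lambda$-system with $|\mathcal{R}|^+ < \lambda$, and let $\bb{Q} = \mathrm{Coll}(\omega, |\mathcal{R}|)$. In $V^{\bb{Q}}$, $S$ remains a strong $\lambda$-system but with countably many relations, so by (1) it has a cofinal branch $\dot{b}$, which below a suitable condition $p_0$ we may assume is through a fixed $R \in \mathcal{R}$. For each $p \leq p_0$, set $b_p = \{(\alpha, \beta) \in S : p \Vdash (\check{\alpha}, \check{\beta}) \in \dot{b}\}$; by absoluteness of $R$-comparability, each $b_p$ is a branch of $S$ through $R$ in $V$. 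If no $b_p$ is cofinal, then $\gamma_p := \sup\{\alpha : (\alpha, \beta) \in b_p \text{ for some } \beta\} + 1 < \lambda$, and regularity of $\lambda$ together with $|\bb{Q}| < \lambda$ gives $\gamma^* := \sup_{p \leq p_0} \gamma_p < \lambda$. But $p_0 \Vdash \dot{b}$ is cofinal provides, by genericity, some $q \leq p_0$, $\alpha > \gamma^*$, and $\beta < \kappa$ with $q \Vdash (\check{\alpha}, \check{\beta}) \in \dot{b}$, so $\alpha \leq \gamma_q \leq \gamma^*$, a contradiction. Hence some $b_p$ is a cofinal branch of $S$ in $V$.

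The main technical obstacle is verifying the strong system condition for $T$ in the (2) $\Rightarrow$ (1) direction: the naive choice of levels as $\{\alpha : \exists r \leq q, \ r \Vdash \check{\alpha} \in \dot{I}\}$ can fail, since two such witnesses $r_0, r_1$ may be incompatible, so no single $p$ forces both levels into $\dot{I}$ and hence no $R'_{n, p}$ can connect them. Localizing to a uniform lower bound $p_*$ serving an unbounded collection of levels, which is the essential use of $|\bb{Q}| < \lambda$ and the regularity of $\lambda$, is what resolves this difficulty.
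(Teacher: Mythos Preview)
Your proof is correct and follows essentially the same approach as the paper's: in both directions you collapse or code via the small poset, pull the system across between $V$ and $V^{\bb{Q}}$ using relations indexed by conditions, and use $|\bb{Q}| < \lambda$ together with regularity for the pigeonhole step. The only cosmetic differences are that you frame (2) $\Rightarrow$ (1) via a density argument where the paper argues by contradiction, and in (1) $\Rightarrow$ (2) you first pass to a condition $p_0$ deciding the relation $R$ through which $\dot{b}$ runs (which is in fact slightly cleaner than the paper's phrasing).
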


\begin{proof}
	(1) $\Rightarrow$ (2): Suppose (1) holds, and let $S = \langle \{\{\alpha\} \times \kappa_\alpha \mid \alpha \in I \}, \mathcal{R} \rangle$ be a strong $\lambda$-system with $|\mathcal{R}|^+ < \lambda$. Let $\bb{P} = \mathrm{Coll}(\omega, |\mathcal{R}|)$, and let $G$ be $\bb{P}$-generic. $|\bb{P}|^+ < \lambda$ and, in $V[G]$, $S$ is a strong $\lambda$-system with countably many relations. Thus, by (1), there is a cofinal branch $b \subseteq S$ in $V[G]$. Let $\dot{b} \in V$ be a $\bb{P}$-name for a cofinal branch through $S$. For $p \in \bb{P}$, let $b_p = \{u \in S \mid p \Vdash ``u \in \dot{b}"\}$. Each $b_p$ is a branch through $S$. Since $|\bb{P}| < \lambda$, there is $p \in G$ such that $b_p$ is cofinal.

	(2) $\Rightarrow$ (1): Suppose $\lambda$ satisfies the strong system property and, for sake of contradiction, suppose there is a poset $\bb{P}$ such that $|\bb{P}|^+ < \lambda$, a condition $p \in \bb{P}$, and a $\bb{P}$-name $\dot{S} = \langle \{\{\alpha\} \times \dot{\kappa}_\alpha \mid \alpha \in \dot{I} \}, \{\dot{R}_n \mid n < \omega \} \rangle$ such that $p$ forces $\dot{S}$ to be a strong $\lambda$-system with countably many relations and no cofinal branch.

	For all $\alpha < \lambda$ such that $p \not\Vdash ``\alpha \not\in I,"$ find $q_\alpha \leq p$ and $\kappa^*_\alpha < \lambda$ such that $q_\alpha \Vdash ``\alpha \in \dot{I}$ and $\dot{\kappa}_\alpha = \kappa^*_\alpha."$ As $|\bb{P}| < \lambda$, we can find an unbounded $J \subseteq \lambda$ and a $q \leq p$ such that, for all $\alpha \in J$, $q_\alpha = q$. Define a system $T = \langle \{\{\alpha\} \times \kappa^*_\alpha \mid \alpha \in J\}, \{R_{n,s} \mid n < \omega, s \leq q\} \rangle$ in $V$ as follows: for all $\alpha_0 < \alpha_1$, both in $J$, for all $\beta_0 < \kappa^*_{\alpha_0}$ and $\beta_1 < \kappa^*_{\alpha_1}$, for all $n < \omega$, and for all $s \leq q$, let $(\alpha_0, \beta_0) <_{R_{n,s}} (\alpha_1, \beta_1)$ iff $s \Vdash ``(\alpha_0, \beta_0) <_{\dot{R}_n} (\alpha_1, \beta_1)"$. Since $p$ forces $\dot{S}$ to be a strong $\lambda$-system, it is easily verified that $T$ is a strong $\lambda$-system with $|\bb{P}|$ relations. By the strong system property, there are $b \subseteq T$, $n < \omega$, and $s \leq q$ such that $b$ is a cofinal branch in $T$ through $R_{n, s}$. But then $s \Vdash ``b$ is a cofinal branch in $\dot{S}$ through $\dot{R}_n$," contradicting the assumption that $p$ forces $\dot{S}$ to have no cofinal branches.
\end{proof}

\begin{remark}
	Note that, in the proof of (2) $\Rightarrow$ (1) in Proposition \ref{robustsystemproperty}, there was nothing special about the assumption that $\dot{S}$ was forced to only have countably many relations. We could have reached the same conclusion from the assumption that $p \Vdash ``\dot{S}$ has $\delta$ relations" for any $\delta$ with $\delta^+ < \lambda$. We therefore obtain that the strong system property is always robust under small forcing. In particular, suppose $\lambda$ is regular, $\kappa$ and $\mu$ are such that $\kappa^+, \mu^+ < \lambda$, $\bb{P}$ is a forcing poset with $|\bb{P}| = \kappa$, and, in $V^{\bb{P}}$, there is a strong $\lambda$-system with $\mu$ relations and no cofinal branch. Then, in $V$, there is a strong $\lambda$-system with $\max(\mu, \kappa)$ relations and no cofinal branch.
\end{remark}

\begin{proposition} \label{wkcompact}
	Suppose $\lambda$ is weakly compact. Then $\lambda$ satisfies the strong system property.
\end{proposition}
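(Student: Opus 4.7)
The plan is to exploit the elementary embedding characterization of weak compactness and reflect a branch from above the critical point down to $V$. Fix a strong $\lambda$-system $S = \langle I \times \kappa, \mathcal{R}\rangle$ with $|\mathcal{R}|^+ < \lambda$. Using weak compactness, pick a transitive model $M$ of size $\lambda$ containing $S$ (and all auxiliary data) together with an elementary embedding $j : M \to N$ into a transitive $N$, with $\mathrm{crit}(j) = \lambda$. Since $\kappa$ and $|\mathcal{R}|$ lie strictly below $\mathrm{crit}(j)$, one has $j(\kappa) = \kappa$ and $j(\mathcal{R}) = \{j(R) \mid R \in \mathcal{R}\}$; moreover $j(I) \cap \lambda = I$ and $j(I)$ is unbounded in $j(\lambda)$, so $j(S)$ is a strong $j(\lambda)$-system in $N$ whose levels still have width $\kappa$.

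The key move is to fix any $\alpha^* \in j(I)$ with $\alpha^* > \lambda$ and any $\beta^* < \kappa$, then analyze the predecessors of $(\alpha^*, \beta^*)$ in $j(S)$. Applying clause ($4'$) of the strong-system definition inside $N$ furnishes, for each $\gamma \in I$, a pair $(\delta_\gamma, R_\gamma) \in \kappa \times \mathcal{R}$ with $(\gamma, \delta_\gamma) <_{j(R_\gamma)} (\alpha^*, \beta^*)$. A pigeonhole on $\mathcal{R}$ (using $|I| = \lambda > |\mathcal{R}|$) then yields a single $R^* \in \mathcal{R}$ and an unbounded $J \subseteq I$ on which $R_\gamma \equiv R^*$.

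To finish, use tree-likeness together with the level-ordering axiom: for $\gamma_1 < \gamma_2$ in $J$, both $(\gamma_i, \delta_{\gamma_i})$ are $j(R^*)$-below $(\alpha^*, \beta^*)$, so they are $j(R^*)$-comparable, and the level condition forces $(\gamma_1, \delta_{\gamma_1}) <_{j(R^*)} (\gamma_2, \delta_{\gamma_2})$. Because both nodes lie below the critical point, $j$ fixes them, so elementarity pulls this back to $R^*$ in $V$. Hence $b = \{(\gamma, \delta_\gamma) \mid \gamma \in J\}$ is a cofinal branch of $S$ through $R^*$.

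This is really a textbook weak-compact reflection argument, so there is no deep obstacle. The only delicate points are choosing $M$ rich enough that $N$ genuinely recognizes $j(S)$ as a strong $j(\lambda)$-system (routine) and verifying that $j$ maps $\mathcal{R}$ pointwise so that each $R_\gamma$ can be identified with an element of $\mathcal{R}$ in $V$; this last step is exactly where the hypothesis $|\mathcal{R}|^+ < \lambda$, and in particular $|\mathcal{R}| < \mathrm{crit}(j)$, does its work.
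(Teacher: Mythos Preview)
Your proof is correct and takes essentially the same approach as the paper's: extend the system past $\lambda$ via a weak-compactness characterization, pick a node on a level above $\lambda$, apply $(4')$ to obtain predecessors on each level of $I$, pigeonhole on $\mathcal{R}$ to fix a single relation, and invoke tree-likeness to conclude these predecessors form a cofinal branch. The only cosmetic difference is that you use the elementary-embedding formulation $j : M \to N$ with $\mathrm{crit}(j) = \lambda$, whereas the paper uses the equivalent extension-property formulation $(V_\lambda, \in, A) \prec (X, \in, B)$.
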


\begin{proof}
	Let $S = \langle \{\{\alpha\} \times \kappa_\alpha \mid \alpha \in I\}, \mathcal{R} \rangle$ be a strong $\lambda$-system. $S$ can be coded in a natural way by a set $A \subseteq V_\lambda$. By the weak compactness of $\lambda$, find a set $X \not= V_\lambda$ and $B \subseteq X$ such that $(V_\lambda, \in, A) \prec (X, \in, B)$. By elementarity and the fact that $|\mathcal{R}| < \lambda$, $B$ codes a strong system $T = \langle \{\{\alpha\} \times \kappa_\alpha \mid \alpha \in J\}, \mathcal{R} \rangle$ such that $J$ is unbounded in the ordinals of $X$ and $T$ extends $S$. Choose $\gamma \in J \setminus \kappa$ and, for all $\alpha \in I$, find $\beta_\alpha < \kappa_\alpha$ and $R_\alpha \in \mathcal{R}$ such that $(\alpha, \beta_\alpha) <_{R_\alpha} (\gamma, 0)$ in $T$. Since $|\mathcal{R}| < \lambda$, there is an unbounded $I^* \subseteq I$ and a fixed $R \in \mathcal{R}$ such that, for all $\alpha \in I^*$, $R_\alpha = R$. Then $b = \{(\alpha, \beta_\alpha) \mid \alpha \in I^*\}$ is a cofinal branch in $S$ through $R$.
\end{proof}

Since $\lambda$ is weakly compact iff $\lambda$ is inaccessible and has the tree property, Proposition \ref{wkcompact} implies that, for inaccessible $\lambda$, the tree property is equivalent to the strong system property. As we will see later, this equivalence does not necessarily hold for accessible cardinals. Also, note that this is in contrast to the situation with stationary reflection, as we saw in the previous section that, for inaccessible $\lambda$, stationary reflection is not necessarily equivalent to robust stationary reflection.

\begin{proposition}
	Suppose $\mu$ is a singular limit of strongly compact cardinals and $\lambda > \mu$ is a regular cardinal. If $S = \langle I \times \kappa, \mathcal{R} \rangle$ is a strong $\lambda$-system, $\kappa \leq \mu$, and $|\mathcal{R}| < \mu$, then $S$ has a cofinal branch. In particular, $\mu^+$ satisfies the strong system property.
\end{proposition}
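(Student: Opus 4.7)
The plan is to adapt the weak-compactness argument of Proposition~\ref{wkcompact} by replacing the elementary extension $V_\lambda \prec X$ used there with a fine, $\nu$-complete ultrafilter obtained from the strong compactness of a cardinal $\nu<\mu$. Since $\mu$ is a singular limit of strongly compact cardinals, I fix $\nu<\mu$ strongly compact with $|\mathcal{R}|^+<\nu$; when $\kappa<\mu$, I also arrange $\kappa<\nu$ (the case $\kappa=\mu$ is handled by the extra step below). By $\nu$-strong compactness and $\nu<\lambda$, let $U$ be a fine, $\nu$-complete ultrafilter on $\mathcal{P}_\nu(\lambda)$. For each $x\in\mathcal{P}_\nu(\lambda)$, regularity of $\lambda>\nu$ gives $\sup(x)<\lambda$, so I fix $\alpha_x\in I$ with $\alpha_x>\sup(x)$.

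The main computation goes as follows. For each $\eta\in I$, fineness gives $\hat\eta:=\{x\in\mathcal{P}_\nu(\lambda):\eta\in x\}\in U$. For $x\in\hat\eta$, applying $(4')$ to $\eta<\alpha_x$ (both in $I$) with $\beta_1=0$ yields some $(\beta,R)\in\kappa\times\mathcal{R}$ with $(\eta,\beta)<_R(\alpha_x,0)$; selecting one such pair by a fixed wellorder defines $f_\eta:\hat\eta\to\kappa\times\mathcal{R}$. Since $|\kappa\times\mathcal{R}|<\nu$ and $U$ is $\nu$-complete, $f_\eta$ is constant on some $X_\eta\in U$ with value $(\beta^*_\eta,R^*_\eta)$; pigeonholing over $I$ (whose size exceeds $|\mathcal{R}|$) then produces an unbounded $I^*\subseteq I$ on which $R^*_\eta$ takes a single value $R^*$. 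Setting $b=\{(\eta,\beta^*_\eta):\eta\in I^*\}$, for $\eta_0<\eta_1$ in $I^*$ any $x\in X_{\eta_0}\cap X_{\eta_1}$ (nonempty, as the intersection lies in $U$) places both $(\eta_0,\beta^*_{\eta_0})$ and $(\eta_1,\beta^*_{\eta_1})$ $R^*$-below $(\alpha_x,0)$; tree-likeness of $R^*$ then gives $(\eta_0,\beta^*_{\eta_0})<_{R^*}(\eta_1,\beta^*_{\eta_1})$, so $b$ is the desired cofinal branch.

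The main obstacle is the edge case $\kappa=\mu$, where no strongly compact $\nu<\mu$ can exceed $\kappa$, so $\nu$-completeness does not directly collapse the $\beta$-coordinate. The plan there is to split the pigeonhole: fix $\nu<\mu$ strongly compact with $|\mathcal{R}|^+,\cf(\mu)^+<\nu$ and a cofinal sequence $\langle\mu_i:i<\cf(\mu)\rangle$ of strongly compacts in $\mu$, reduce to the constant $R^*$ as above, and then pass to the ultrapower $M_U$ to observe that $[\beta_\eta]_U<j_U(\mu)=\sup\{j_U(\xi):\xi<\mu\}$ (using $\cf(\mu)<\nu$), so $[\beta_\eta]_U<j_U(\mu_{i_\eta})$ for some $i_\eta<\cf(\mu)$. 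A second pigeonhole (legal since $\cf(\mu)<\nu$) fixes $i_\eta=i^*$ on an unbounded $I^{**}\subseteq I^*$, so $\beta_\eta(x)<\mu_{i^*}$ on a $U$-large set. If $\mu_{i^*}<\nu$, the final pigeonhole producing a constant $\beta^*_\eta$ goes through as before; otherwise I iterate with a strongly compact $\nu'\in(\mu_{i^*},\mu)$, and since the successive $\mu_{i^*}$ strictly increase below $\mu$ the iteration terminates in fewer than $\cf(\mu)$ steps. The ``in particular'' statement about $\mu^+$ follows at once, as any strong $\mu^+$-system with $|\mathcal{R}|^+<\mu^+$ automatically has $\kappa\leq\mu$ and $|\mathcal{R}|<\mu$.
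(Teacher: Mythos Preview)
Your argument for $\kappa<\mu$ is correct, and your first reduction in the $\kappa=\mu$ case---fixing $R^*$ and bounding the second coordinate below some $\mu_{i^*}$ via the ultrapower observation $j_U(\mu)=\sup_{i<\cf(\mu)}j_U(\mu_i)$---is correct and matches the paper's first step.

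The gap is in your second step when $\kappa=\mu$. The phrase ``iterate with a strongly compact $\nu'\in(\mu_{i^*},\mu)$'' is not well-defined, and the termination claim fails under either natural reading. If you mean to apply your $\kappa<\mu$ argument to the reduced system $\langle I^{**}\times\mu_{i^*},\{R^*\}\rangle$, this does not go through: that system satisfies only condition (4), not (4$'$), and your $\kappa<\mu$ argument used (4$'$) essentially when it anchored everything at $(\alpha_x,0)$. If instead you mean to restart the whole argument on the original $S$ with a larger $\nu'$, then in the ``otherwise'' case you learn only that the new bound $\mu_{i^{*'}}\geq\nu'>\mu_{i^*}$, so the successive bounds do increase---but a strictly increasing sequence of ordinals below $\mu$ can have length far exceeding $\cf(\mu)$ (e.g.\ $0,1,2,\ldots$ inside $\aleph_\omega$), so there is no reason the process halts in fewer than $\cf(\mu)$ steps, and you say nothing about limit stages.

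The fix, which is exactly the paper's second step, is to run \emph{one} further ultrafilter argument directly on the plain narrow system $S'=\langle I^{**}\times\mu_{i^*},\{R^*\}\rangle$, now pigeonholing on \emph{pairs} of coordinates rather than relying on (4$'$). Choose a strongly compact $\nu'>\mu_{i^*}$ and a $\nu'$-complete ultrafilter $U'$ on $\lambda$ extending the cobounded filter with $I^{**}\in U'$. For each $\alpha\in I^{**}$ and each $\beta\in I^{**}\setminus(\alpha+1)$, condition (4) for $S'$ gives a pair $(\gamma^\alpha_\beta,\delta^\alpha_\beta)\in\mu_{i^*}\times\mu_{i^*}$ with $(\alpha,\gamma^\alpha_\beta)<_{R^*}(\beta,\delta^\alpha_\beta)$; since $|\mu_{i^*}\times\mu_{i^*}|<\nu'$, the $\nu'$-completeness of $U'$ gives a constant value $(\gamma^\alpha,\delta^\alpha)$ on a $U'$-large set $Y_\alpha$. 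One more pigeonhole over $\alpha$ fixes $(\gamma^*,\delta^*)$ on an unbounded $J'\subseteq I^{**}$. For $\alpha_0<\alpha_1$ in $J'$, any $\beta\in Y_{\alpha_0}\cap Y_{\alpha_1}$ witnesses $(\alpha_0,\gamma^*),(\alpha_1,\gamma^*)<_{R^*}(\beta,\delta^*)$, and tree-likeness yields the cofinal branch $\{(\alpha,\gamma^*):\alpha\in J'\}$. No iteration is needed.
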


\begin{proof}
	Fix a regular $\lambda > \mu$, and let $S = \langle I \times \kappa, \mathcal{R} \rangle$ be a strong $\lambda$-system with $\kappa \leq \mu$ and $|\mathcal{R}| < \mu$. We assume for this proof that $\kappa = \mu$, as the case $\kappa < \mu$ is easier. Let $\langle \mu_i \mid i < \cf(\mu) \rangle$ be an increasing sequence of strongly compact cardinals, cofinal in $\mu$, such that $\cf(\mu), |\mathcal{R}| < \mu_0$.

	Let $F$ be the filter of co-bounded subsets of $S$, i.e. the set of $X \subseteq I \times \kappa$ such that $|(I \times \kappa) \setminus X| < \lambda$, and let $U$ be a $\mu_0$-complete ultrafilter on $S$ extending $F$. For each $\alpha \in I$ and $u \in S_{>\alpha}$, pick $(i^\alpha_u, R^\alpha_u)$, with $i^\alpha_u < \cf(\mu)$ and $R^\alpha_u \in \mathcal{R}$, such that, for some $\beta < \mu_{i^\alpha_u}$, $(\alpha, \beta) <_{R^\alpha_u} u$. Since $S_{>\alpha} \in U$ and $U$ is $\mu_0$-complete, there is $(i^\alpha, R^\alpha)$ such that the set $X_\alpha := \{u \in S_{>\alpha} \mid (i^\alpha_u, R^\alpha_u) = (i^\alpha, R^\alpha)\} \in U$. There is then an unbounded $J \subseteq I$ and $(i^*, R^*)$ such that, for all $\alpha \in J$, $(i^\alpha, R^\alpha) = (i^*, R^*)$. Now, if $\alpha_0 < \alpha_1$ are both in $J$, we can find $u \in X_{\alpha_0} \cap X_{\alpha_1}$. There are then $\beta_0, \beta_1 < \mu_{i^*}$ such that $(\alpha_0, \beta_0), (\alpha_1, \beta_1) <_{R^*} u$. Since $R^*$ is tree-like, we have $(\alpha_0, \beta_0) <_{R^*} (\alpha_1, \beta_1)$. This shows that $S' := \langle J \times \mu_{i^*}, \{R^*\} \rangle$ is a $\lambda$-system.

	Next, fix $k > i^*$ with $k < \cf(\mu)$, and let $U'$ be a $\mu_k$-complete ultrafilter over $\lambda$ extending the co-bounded filter and such that $J \in U'$. Fix $\alpha \in J$. For all $\beta \in J \setminus (\alpha + 1)$, fix $\gamma^\alpha_\beta, \delta^\alpha_\beta < \mu_{i^*}$ such that $(\alpha, \gamma^\alpha_\beta) <_{R^*} (\beta, \delta^\alpha_\beta)$. Since $J \setminus (\alpha + 1) \in U'$ and $U'$ is $\mu_k$-complete, we can fix $\gamma^\alpha, \delta^\alpha < \mu_{i^*}$ such that $Y_\alpha := \{ \beta \in J \setminus (\alpha + 1) \mid (\gamma^\alpha_\beta, \delta^\alpha_\beta) = (\gamma^\alpha, \delta^\alpha)\} \in U'$. Next, fix an unbounded $J' \subseteq J$ and $\gamma^*, \delta^* < \mu_{i^*}$ such that, for all $\alpha \in J'$, $(\gamma^\alpha, \delta^\alpha) = (\gamma^*, \delta^*)$. Suppose $\alpha_0 < \alpha_1$ are both in $J'$. Fix $\beta \in X_{\alpha_0} \cap X_{\alpha_1}$. Then $(\alpha_0, \gamma^*), (\alpha_1, \gamma^*) <_{R^*} (\beta, \delta^*)$, so, since $R^*$ is tree-like, $(\alpha_0, \gamma^*) <_{R^*} (\alpha_1, \gamma^*)$. Hence, $\langle (\alpha, \gamma^*) \mid \alpha \in J' \rangle$ is a cofinal branch through $R^*$ in $S$. 
\end{proof}

A similar argument shows that all systems with finite width have a cofinal branch.

\begin{proposition}
	Suppose $\lambda$ is a regular cardinal and $S = \langle I \times n, \mathcal{R} \rangle$ is a $\lambda$-system with $n, |\mathcal{R}| < \omega$. Then $S$ has a cofinal branch.
\end{proposition}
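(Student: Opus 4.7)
The plan is to mimic the strongly-compact argument of the previous proposition, but using just an arbitrary ultrafilter $U$ on $I$ that extends the filter of co-bounded subsets. No completeness property of $U$ is needed, because the finiteness of $n$ and of $|\mathcal{R}|$ ensures that every pigeonhole step partitions $I$ into only finitely many pieces, and every ultrafilter chooses one piece from a finite partition.

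First I would fix such an ultrafilter $U$. For each $\alpha \in I$, the tail $I \setminus (\alpha + 1)$ is in $U$, and for each $\beta$ in this tail, condition (4) of the definition of a $\lambda$-system supplies some triple $(i,j,R) \in n \times n \times \mathcal{R}$ with $(\alpha, i) <_R (\beta, j)$; pick one such triple $(i^\alpha_\beta, j^\alpha_\beta, R^\alpha_\beta)$. A finite pigeonhole inside $U$ yields a single triple $(i^\alpha, j^\alpha, R^\alpha)$ together with a $U$-large set $A_\alpha \subseteq I \setminus (\alpha+1)$ on which $(i^\alpha_\beta, j^\alpha_\beta, R^\alpha_\beta)$ is constantly equal to $(i^\alpha, j^\alpha, R^\alpha)$. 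A second finite pigeonhole, now applied to $\alpha \mapsto (i^\alpha, j^\alpha, R^\alpha)$ on $I$, produces a fixed $(i^*, j^*, R^*)$ and a $U$-large set $J \subseteq I$ on which $(i^\alpha, j^\alpha, R^\alpha) = (i^*, j^*, R^*)$.

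Then I would claim that $b := \{(\alpha, i^*) \mid \alpha \in J\}$ is a cofinal branch through $R^*$. Cofinality is immediate because $J \in U$ and $U$ extends the co-bounded filter, so $J$ is unbounded in $\lambda$. For pairwise $R^*$-comparability, given $\alpha_0 < \alpha_1$ both in $J$, the set $A_{\alpha_0} \cap A_{\alpha_1} \cap (I \setminus (\alpha_1 + 1))$ lies in $U$ and is in particular nonempty; picking any $\beta$ in it, the defining property of $A_{\alpha_0}$ and $A_{\alpha_1}$ gives $(\alpha_0, i^*) <_{R^*} (\beta, j^*)$ and $(\alpha_1, i^*) <_{R^*} (\beta, j^*)$. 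Tree-likeness of $R^*$ then forces $(\alpha_0, i^*)$ and $(\alpha_1, i^*)$ to be $R^*$-comparable, and condition (3) pins down the direction: $(\alpha_0, i^*) <_{R^*} (\alpha_1, i^*)$. The only conceptual point to watch is this replacement of a $\mu_0$-complete ultrafilter by an arbitrary one, which is what removes the strongly-compact hypothesis used in the previous proposition and carries the whole argument through in pure ZFC.
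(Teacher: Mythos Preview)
Your proof is correct and follows essentially the same approach as the paper: both take an ultrafilter extending the co-bounded filter, use finite pigeonhole twice to pin down a fixed triple $(i^*, j^*, R^*)$ and an unbounded $J$, and then use tree-likeness at a common $\beta$ above any pair from $J$ to conclude that $\{(\alpha, i^*) \mid \alpha \in J\}$ is a cofinal branch. The only cosmetic differences are that the paper puts its ultrafilter on $\lambda$ with $I \in U$ rather than directly on $I$, and obtains $J$ merely unbounded (via regularity of $\lambda$) rather than $U$-large.
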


\begin{proof}
	Let $U$ be an ultrafilter over $\lambda$, extending the co-bounded filter, such that $I \in U$. Fix $\alpha \in I$. For all $\beta \in I \setminus (\alpha + 1)$, choose $i^\alpha_\beta, j^\alpha_\beta < n$ and $R^\alpha_\beta \in \mathcal{R}$ such that $(\alpha, i^\alpha_\beta) <_{R^\alpha_\beta} (\beta, j^\alpha_\beta)$. Fix $i^\alpha, j^\alpha < n$ and $R^\alpha \in \mathcal{R}$ such that $X_\alpha := \{\beta \in I \setminus (\alpha + 1) \mid (i^\alpha_\beta, j^\alpha_\beta, R^\alpha_\beta) = (i^\alpha, j^\alpha, R^\alpha)\} \in U$. Fix an unbounded $J \subseteq I$ and $(i^*, j^*, R^*)$ such that, for all $\alpha \in J$, $(i^\alpha, j^\alpha, R^\alpha) = (i^*, j^*, R^*)$. Now, suppose $\alpha_0 < \alpha_1$ are both in $J$, and find $\beta \in X_{\alpha_0} \cap X_{\alpha_1}$. $(\alpha_0, i^*), (\alpha_1, i^*) <_{R^*} (\beta, j^*)$, so $(\alpha_0, i^*) <_{R^*} (\alpha_1, j^*)$. Thus, $\{(\alpha, i^*) \mid \alpha \in J\}$ is a cofinal branch through $R^*$ in $S$.
\end{proof}

However, the existence of certain subadditive, unbounded functions implies the existence of strong systems of possibly small width with no cofinal branch.

\begin{definition}
	Suppose $\kappa < \lambda$ are infinite, regular cardinals and $d:[\lambda]^2 \rightarrow \kappa$.
	\begin{enumerate}
		\item{$d$ is \emph{subadditive} if, for all $\alpha < \beta < \gamma < \lambda$:}
		\begin{enumerate}
			\item{$d(\alpha, \gamma) \leq \max(\{d(\alpha, \beta), d(\beta, \gamma)\})$;}
			\item{$d(\alpha, \beta) \leq \max(\{d(\alpha, \gamma), d(\beta, \gamma)\})$.}
		\end{enumerate}
		\item{$d$ is \emph{unbounded} if, whenever $I \subseteq \lambda$ is unbounded, $d``[I]^2$ is unbounded in $\kappa$.}
	\end{enumerate}
\end{definition}

For more on the consistency of such functions, see \cite{narrowsystems}.

\begin{proposition}
	Suppose $\kappa < \lambda$ are infinite, regular cardinals and $d:[\lambda]^2 \rightarrow \kappa$ is subadditive and unbounded. Then there is a strong $\lambda$-system $S = \langle \lambda \times 1, \mathcal{R} \rangle$ such that $|\mathcal{R}| = \kappa$ and $S$ has no cofinal branch.
\end{proposition}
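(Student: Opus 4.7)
The plan is to use $d$ directly to define the relations, with one relation $R_\eta$ for each threshold $\eta < \kappa$. Specifically, I would define $\mathcal{R} = \{R_\eta \mid \eta < \kappa\}$, where, for all $\alpha < \beta < \lambda$, we set $(\alpha, 0) <_{R_\eta} (\beta, 0)$ iff $d(\alpha,\beta) \leq \eta$. Clause (3) of the definition of a system is built into this definition, and clauses (1) and (2) are trivial.

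Next I would verify that each $R_\eta$ is transitive and tree-like, which is exactly where the two clauses of subadditivity are used. If $(\alpha,0) <_{R_\eta} (\beta, 0) <_{R_\eta} (\gamma, 0)$, then subadditivity (a) gives $d(\alpha, \gamma) \leq \max(d(\alpha,\beta), d(\beta,\gamma)) \leq \eta$, so $R_\eta$ is transitive. For tree-likeness, suppose $(\alpha, 0), (\beta, 0) <_{R_\eta} (\gamma, 0)$; we may assume $\alpha < \beta < \gamma$, and then subadditivity (b) gives $d(\alpha, \beta) \leq \max(d(\alpha, \gamma), d(\beta, \gamma)) \leq \eta$, so $(\alpha, 0) <_{R_\eta} (\beta, 0)$. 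The strong system condition ($4'$) is immediate: given $\alpha < \beta$ in $\lambda$, let $\eta = d(\alpha, \beta)$, and then $(\alpha, 0) <_{R_\eta} (\beta, 0)$.

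To rule out cofinal branches, suppose for contradiction that $b$ is a cofinal branch through some $R_\eta$, and let $I = \{\alpha < \lambda \mid (\alpha, 0) \in b\}$. Then $I$ is unbounded in $\lambda$, and pairwise $R_\eta$-comparability forces $d(\alpha, \beta) \leq \eta$ for all $\{\alpha, \beta\} \in [I]^2$, so $d``[I]^2 \subseteq \eta + 1$ is bounded in $\kappa$, contradicting the unboundedness of $d$. Finally, to see that $|\mathcal{R}| = \kappa$, note that the same unboundedness applied to $I = \lambda$ gives $\mathrm{ran}(d)$ cofinal in $\kappa$, and since $R_\eta \subsetneq R_{\eta'}$ whenever $\eta < \eta'$ and some pair $\{\alpha, \beta\}$ has $\eta < d(\alpha, \beta) \leq \eta'$, we obtain $\kappa$ distinct relations (and there are at most $\kappa$ of them by construction).

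There isn't really a main obstacle here: once one notices that subadditivity clauses (a) and (b) correspond exactly to transitivity and tree-likeness of the threshold relations $R_\eta$, the verification is essentially automatic, and unboundedness of $d$ is precisely the statement that no threshold $\eta$ supports a cofinal branch.
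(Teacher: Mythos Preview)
Your proof is correct and is essentially identical to the paper's: both define $R_\eta$ by the threshold condition $d(\alpha,\beta) \leq \eta$, use subadditivity clauses (a) and (b) for transitivity and tree-likeness respectively, and invoke unboundedness of $d$ to rule out cofinal branches. You even go slightly beyond the paper by explicitly verifying that the $R_\eta$ are pairwise distinct (using that $\mathrm{ran}(d)$ is cofinal in the regular cardinal $\kappa$), which the paper leaves implicit.
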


\begin{proof}
	We define $S = \langle \lambda \times 1, \mathcal{R} \rangle$, with $\mathcal{R} = \{R_\eta \mid \eta < \kappa\}$. Given $\alpha < \beta < \lambda$ and $\eta < \kappa$, let $(\alpha,0) <_{R_\eta} (\beta,0)$ if and only if $\eta \geq d(\alpha, \beta)$. The fact that each $R_\eta$ is transitive and tree-like follows from (a) and (b) of the definition of subadditivity, respectively. It is then easy to verify that $S$ is a strong $\lambda$-system. Suppose for sake of contradiction that $S$ has a cofinal branch. Then there is an unbounded $I \subseteq \lambda$ and an $\eta < \kappa$ such that, for all $\alpha < \beta$, both in $I$, we have $(\alpha,0) <_{R_\eta} (\beta,0)$. Since $d$ is unbounded, we can find $\alpha < \beta$ in $I$ such that $d(\alpha, \beta) > \eta$. But then $(\alpha,0) \not<_{R_\eta} (\beta,0)$. Contradiction.
\end{proof}

\section{Preservation lemmas} \label{preservationSect}

In this section, we present various preservation lemmas for systems. We first make the following useful definition.

\begin{definition}
	Let $\lambda$ be an uncountable regular cardinal, and let $S = \langle I \times \kappa, \mathcal{R} \rangle$ be a narrow $\lambda$-system. $\bar{b} = \{b_{\gamma, R} \mid \gamma < \kappa, R \in \mathcal{R} \}$ is a \emph{full set of branches through S} if:
	\begin{enumerate}
		\item{for all $\gamma < \kappa$ and $R \in \mathcal{R}$, $b_{\gamma, R}$ is a branch of $S$ through $R$;}
		\item{for all $\alpha \in I$, there are $\gamma < \kappa$ and $R \in \mathcal{R}$ such that $b_{\gamma, R} \cap S_\alpha \not= \emptyset$.}
	\end{enumerate}
\end{definition}

\begin{remark}
	Note that, since $\lambda$ is regular and $\mathrm{width}(S) < \lambda$, condition (2) in the above definition implies that, for some $\gamma < \kappa$ and $R \in \mathcal{R}$, $b_{\gamma, R}$ is a cofinal branch.
\end{remark}

\begin{lemma} \label{preservationtheorem}
	Suppose $\theta < \lambda$ are regular cardinals and $\bb{P}$ and $\bb{Q}$ are forcing notions such that $\bb{P}$ has the $\theta$-c.c.\ and $\bb{Q}$ is $\theta$-closed. Let $G$ be $\bb{P}$-generic over $V$ and let $H$ be $\bb{Q}$-generic over $V[G]$. Suppose further that, in $V[G]$, $S$ is a narrow $\lambda$-system, $\mathrm{width}(S) < \theta$, and, in $V[G][H]$, $S$ has a full set of branches. Then $S$ has a cofinal branch in $V[G]$.
\end{lemma}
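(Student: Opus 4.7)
The plan is to argue the contrapositive: assuming $S$ has no cofinal branch in $V[G]$, derive a contradiction from the existence of a full set of branches in $V[G][H]$. Write $\mu = \mathrm{width}(S)$, so $\mu < \theta$ and every level of $S$ has size at most $\mu$. By the remark following the definition of a full set of branches, some $b_{\gamma^*, R^*}$ in the purported full set is cofinal; working in $V[G]$ and pigeonholing over the fewer than $\theta$ many pairs $(\gamma, R)$, I pass below a condition $q_0 \in \bb{Q}$ and fix $R^* \in \mathcal{R}$ and a $\bb{Q}$-name $\dot{b}^*$ such that $q_0 \Vdash$ ``$\dot{b}^*$ is a cofinal branch through $R^*$.'' Replacing $\dot{b}^*$ by its $R^*$-downward closure, which is still a cofinal chain through $R^*$ by tree-likeness, I may assume $\dot{b}^*$ is downward-closed.

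The core of the argument is a standard Silver-style tree-of-conditions construction in $V[G]$, exploiting the $\theta$-closure of $\bb{Q}$ (which carries over adequately to $V[G]$ via Easton's Lemma and the $\theta$-c.c.\ of $\bb{P}$). I build, by recursion on $|s|$, a binary tree of conditions $\langle p_s : s \in 2^{<\mu^+}\rangle \subseteq \bb{Q}$ below $q_0$, nodes $\langle v_s : s \in 2^{<\mu^+}\rangle \subseteq S$, and strictly increasing levels $\langle \alpha_n : n < \mu^+\rangle \subseteq I$, such that $p_{s \frown i} \leq p_s$, $p_s \Vdash v_s \in \dot{b}^*$, $v_s \in S_{\alpha_{|s|}}$, and $v_{s \frown 0} \neq v_{s \frown 1}$. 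The availability of splittings at each stage is ensured by the no-cofinal-branch hypothesis: if there were no splitting below some $p_s$ past a fixed bound, then by downward closure and cofinality of $\dot{b}^*$ the set of nodes forced by $p_s$-extensions into $\dot{b}^*$ would furnish a cofinal branch through $R^*$ in $V[G]$, a contradiction. Limit stages below $\mu^+$ use the closure of $\bb{Q}$.

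The main obstacle I anticipate is the concluding counting step. At step $\mu$, lower bounds of the branches of the tree of conditions force $2^\mu \geq \mu^+$ pairwise distinct $\mu$-chains of $S$ through $R^*$, distinct by tree-likeness of $R^*$ at each first point of difference. For a tree one simply passes to a single common level above $\sup_n \alpha_n$ and obtains too many distinct elements there, contradicting the width bound; in our non-strong-system setting, however, $\dot{b}^*$ need not hit a chosen common level in $I$, so this step requires genuine use of the full-set-of-branches hypothesis. Here I plan to exploit that the fewer than $\theta$ many branches $b_{\gamma, R}$ together meet every $\alpha \in I$: for a chosen $\alpha^* \in I$ past the tree of conditions, some branch $b_{\gamma, R}$ in the full set hits $\alpha^*$, and by pigeonholing the $\mu^+$ branches of the tree of conditions over index pairs $(\gamma, R)$ I can locate at least $\mu^+$ distinct elements of $S_{\alpha^*}$ in $V[G]$, contradicting $|S_{\alpha^*}| \leq \mu$.
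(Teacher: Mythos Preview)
Your final counting step has a genuine gap. The tree of conditions you build splits only the single branch $\dot{b}^*$ through $R^*$: for each node $s$, the elements $v_{s^\frown 0}\neq v_{s^\frown 1}$ are both forced into $\dot{b}^*$. When you then choose $\alpha^*$ above the tree and invoke the full set of branches, the element $u_f\in S_{\alpha^*}$ you decide below each $p_f$ lies in some $b_{\gamma_f,R_f}$, and there is no reason for $R_f=R^*$ or for $u_f$ to be $R^*$-above any of the $v_{f\restriction n}$. So even after pigeonholing to make $(\gamma_f,R_f,u_f)$ constant on $\mu^+$ many $f$'s, you obtain no incomparability contradiction: two such $f\neq g$ force the same $u\in b_{\gamma,R}$, but the split you arranged at their first difference was in $\dot{b}^*$, a completely different branch. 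The Silver argument for trees succeeds precisely because the single branch hits every level; here you have correctly identified that it does not, but the proposed patch does not reconnect the split nodes to the level-$\alpha^*$ witnesses. A workable binary-tree version would have to split \emph{every} cofinal $b_{\gamma,R}$ at \emph{every} node of the tree, so that after pigeonholing on $(\gamma,R,u)$ the first-difference split is in the very branch $b_{\gamma,R}$ that $u$ belongs to; that is substantially more than what you wrote.

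The paper avoids the binary tree entirely. It builds in $V$ a single decreasing $\theta$-sequence $\langle q_\xi\mid\xi<\theta\rangle$ in $\bb{Q}$, and at each successor step produces a side condition $r_{\xi+1}\leq q_\xi$ such that $q_{\xi+1}$ and $r_{\xi+1}$ split \emph{all} branches $b_{\gamma,\eta}$ simultaneously (densely over $\bb{P}$, via maximal antichains). One then fixes $\beta^*$ above all the splitting witnesses, passes to $V[G]$, extends each $r_{\xi+1}$ to $r'_{\xi+1}$ deciding which $(\gamma,\eta,\delta)$ gives $(\beta^*,\delta)\in b_{\gamma,\eta}$, and pigeonholes over the $\theta$ many $\xi$'s against fewer than $\theta$ many possible triples. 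The crucial point is linearity: if $\zeta<\xi$ receive the same triple, then $r'_{\xi+1}\leq q_\xi\leq q_{\zeta+1}$, so $r'_{\xi+1}$ and $r'_{\zeta+1}$ lie below the two sides of the stage-$\zeta$ split of $b_{\gamma,\eta}$, yet both put $(\beta^*,\delta)$ into that branch---contradicting tree-likeness. This linear-with-side-conditions structure is exactly what lets the final pigeonholing interact with the splitting; your incomparable leaves $p_f,p_g$ do not have that relationship.

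A secondary issue: you propose to run the construction in $V[G]$, citing Easton's Lemma. Easton only yields that $\bb{Q}$ is $\theta$-\emph{distributive} in $V[G]$, not $\theta$-closed; lower bounds along new decreasing sequences need not exist. The paper builds its $\bb{Q}$-sequences in $V$ (using genuine $\theta$-closure there) and works with $\bb{P}$-names for $S$, passing to $V[G]$ only at the end.
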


\begin{remark}
	This is a slight improvement of Theorem 8 from \cite{sinapova}. The difference is that our proof works for any $S$ with $\mathrm{width}(S) < \theta$, whereas the original theorem required $\mathrm{width}(S)^+ < \theta$.
\end{remark}

\begin{proof}
	Work in $V$. Let $\dot{S}$ be a $\bb{P}$-name for a narrow $\lambda$-system as in the statement of the lemma. Since every narrow $\lambda$-system is easily isomorphic to one of the form $\langle \lambda \times \kappa, \mathcal{R} \rangle$, we may choose $(p,q) \in \bb{P} \times \bb{Q}$ such that:
	\begin{itemize}
		\item{there are $\kappa, \nu < \theta$ such that $p$ forces $\dot{S}$ to be of the form $\langle \lambda \times \kappa, \{\dot{R}_\eta \mid \eta < \nu\} \rangle$;}
		\item{there is a set of $\bb{P} \times \bb{Q}$-names $\{\dot{b}_{\gamma, \dot{R}_\eta} \mid \gamma < \kappa, \eta < \nu\}$ that is forced by $(p,q)$ to be a full set of branches through $\dot{S}$ (for notational simplicity, we will write $\dot{b}_{\gamma, \eta}$ instead of $\dot{b}_{\gamma, \dot{R}_\eta}$);}
		\item{there is $\alpha^* < \lambda$ such that, for every $(\gamma, \eta) \in \kappa \times \nu$, $p \Vdash ``q \Vdash ``\dot{b}_{\gamma, \eta} \subseteq \dot{S}_{<\alpha^*}"$ or $q \Vdash ``\dot{b}_{\gamma, \eta}$ is cofinal.""}
	\end{itemize}
	The last requirement is possible because, in $V^{\bb{P}}$, $\bb{Q}$ is $\theta$-distributive and $\max(\{\kappa, \nu\}) < \theta$. Also, for sake of contradiction, assume that $p \Vdash ``\dot{S}$ has no cofinal branches." Let $\epsilon := \max(\{\kappa, \nu\})$.

	\begin{claim} \label{splitclaim1}
		Suppose  $(\gamma, \eta) \in \kappa \times \nu$, $p' \leq p$, $(p',q) \Vdash ``\dot{b}_{\gamma, \eta}$ is cofinal," and $q_0, q_1 \leq q.$ Then there are $p'' \leq p'$, $q''_i \leq q_i$, and $u_i \in \lambda \times \kappa$ for $i < 2$ such that:
		\begin{itemize}
			\item{for $i < 2$, $(p'', q''_i) \Vdash ``u_i \in \dot{b}_{\gamma, \eta}"$;}
			\item{$p'' \Vdash ``u_0 \perp_{\dot{R}_\eta} u_1."$}
		\end{itemize}
	\end{claim}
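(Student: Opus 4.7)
The plan is to argue by contradiction. Suppose the claim fails for the given $\gamma, \eta, p', q_0, q_1$; unpacking, this says that for every $p'' \leq p'$, every $q''_0 \leq q_0$ and $q''_1 \leq q_1$, and every $u_0, u_1$ with $(p'', q''_i) \Vdash u_i \in \dot{b}_{\gamma, \eta}$, the condition $p''$ does not force $u_0 \perp_{\dot{R}_\eta} u_1$. Fix a $\bb{P}$-generic $G$ through $p'$; the goal is to use the failure to produce, in $V[G]$, a cofinal branch of $\dot{S}[G]$ through $R := \dot{R}_\eta[G]$, contradicting the standing assumption that $p \Vdash \dot{S}$ has no cofinal branches. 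For $i < 2$, define in $V[G]$
\[
B_i := \{u \in \lambda \times \kappa : (\exists p'' \in G, p'' \leq p')(\exists q^* \leq q_i)((p'', q^*) \Vdash u \in \dot{b}_{\gamma, \eta})\},
\]
and I claim that $B_0$ itself is a cofinal branch of $\dot{S}[G]$ through $R$.

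Cofinality of each $B_i$ follows from a direct density argument: since $(p', q_i) \Vdash \dot{b}_{\gamma, \eta}$ is cofinal, for each $\alpha < \lambda$ the projection to $\bb{P}$ of the set of $\bb{P} \times \bb{Q}$-conditions below $(p', q_i)$ that decide some member of $\dot{b}_{\gamma, \eta}$ at a level $\geq \alpha$ is dense below $p'$, so genericity of $G$ places a witness into $B_i \cap S_{\geq \alpha}$. Next I would show cross-comparability: every $u_0 \in B_0$ and $u_1 \in B_1$ are $R$-comparable in $V[G]$. Take witnesses $(p''_0, q^*_0)$ and $(p''_1, q^*_1)$ and choose a common lower bound $p''' \in G$ of $p''_0, p''_1$. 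For any $r \leq p'''$, the tuple $(r, q^*_0, q^*_1, u_0, u_1)$ is a valid instance of the failure hypothesis, so no $r \leq p'''$ forces $u_0 \perp_{\dot{R}_\eta} u_1$; density of the set of $r \leq p'''$ deciding the $\dot{R}_\eta$-comparability of $u_0, u_1$, combined with genericity of $G$, then supplies $r \in G$ forcing them comparable.

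To finish, I would deduce that $B_0$ is itself a branch. Given $u_0, u'_0 \in B_0$, the cofinality of $B_1$ yields $u_1 \in B_1$ at a level strictly above those of $u_0$ and $u'_0$; cross-comparability gives $u_0 <_R u_1$ and $u'_0 <_R u_1$, and tree-likeness of $R$ then forces $u_0$ and $u'_0$ to be $R$-comparable. Thus $B_0$ is a cofinal branch of $\dot{S}[G]$ through $R$ in $V[G]$, the sought contradiction. The crux of the argument, and the reason for introducing the auxiliary $B_1$ alongside $B_0$, lies in this last step: the failure hypothesis only bridges $q_0$-side witnesses with $q_1$-side witnesses, so a cofinal object built from $q_1$-witnesses is needed as a ``probe'' to promote cross-comparability into internal comparability of $B_0$ via tree-likeness.
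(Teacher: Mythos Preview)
Your proof is correct and essentially the same as the paper's: both pass to a $\bb{P}$-generic extension through $p'$, define the cofinal sets $B_i$ (the paper calls them $E_i$) of elements that some extension of $q_i$ places in $\dot{b}_{\gamma,\eta}$, and combine cofinality, tree-likeness, and the no-cofinal-branch hypothesis. The only difference is orientation: the paper argues directly (since $E_0$ is not a branch it contains an $R_\eta$-incomparable pair, and a high $u_1 \in E_1$ must be incomparable with one of them by tree-likeness), whereas you run the contrapositive (failure of the claim gives cross-comparability, which via tree-likeness makes $B_0$ a branch).
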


	\begin{proof}
		Let $G'$ be $\bb{P}$-generic over $V$ with $p' \in G'$, and work in $V[G']$. For $i < 2$, let $E_i = \{u \in \lambda \times \kappa \mid$ for some $q^* \leq q_i$, $q^* \Vdash ``u \in \dot{b}_{\gamma, \eta}"\}.$ Since, in $V$, $(p', q) \Vdash ``\dot{b}_{\gamma, \eta}$ is cofinal," $\{\alpha < \lambda \mid E_i \cap S_\alpha \not= \emptyset\}$ is cofinal in $\lambda$. Since $S$ has no cofinal branch in $V[G']$, we can find $u^0_0, u^1_0 \in E_0$ such that $u^0_0 \perp_{R_\eta} u^1_0$. Let $\alpha < \lambda$ be such that $u^0_0, u^1_0 \in S_{<\alpha}$, and find $u_1 \in E_1 \cap S_{\geq \alpha}$. Since $R_\eta$ is tree-like, $u_1$ is $R_\eta$-comparable with at most one of $u^0_0$ and $u^1_0$. Let $k < 2$ be such that $u^k_0 \perp_{R_\eta} u_1$, and let $u_0 = u^k_0$. For $i < 2$, find $q''_i \leq q_i$ such that $q''_i \Vdash ``u_i \in \dot{b}_{\gamma, \eta}."$ Finally, find $p'' \in G'$ such that $p'' \leq p'$, $p'' \Vdash ``u_0 \perp_{\dot{R}_\eta} u_1,"$ and, for $i < 2$, $(p'', q''_i) \Vdash ``u_i \in \dot{b}_{\gamma, \eta}."$ Then $p''$, $q''_i$, and $u_i$ are as desired.
	\end{proof}

	\begin{claim} \label{splitclaim2}
		Suppose $(\gamma, \eta) \in \kappa \times \nu$ and $q_0, q_1 \leq q$. Then there are $q'_i \leq q_i$ for $i < 2$ and a maximal antichain $A \subseteq \bb{P}$ of conditions below $p$ such that, for all $p' \in A$, either $(p',q) \Vdash ``\dot{b}_{\gamma, \eta}$ is not cofinal" or there are $u_i \in \lambda \times \kappa$ for $i < 2$ such that:
		\begin{itemize}
			\item{for $i < 2$, $(p', q'_i) \Vdash ``u_i \in \dot{b}_{\gamma, \eta}"$;}
			\item{$p' \Vdash ``u_0 \perp_{\dot{R}_\eta} u_1."$}
		\end{itemize}
	\end{claim}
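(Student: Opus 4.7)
My plan is to build the antichain $A$ by recursion together with two decreasing sequences $\langle q^\beta_0 \mid \beta \leq \delta \rangle$ and $\langle q^\beta_1 \mid \beta \leq \delta \rangle$ in $\bb{Q}$, starting from $q^0_i := q_i$ and ultimately setting $q'_i := q^\delta_i$. The antichain will be enumerated as $A = \{p_\beta \mid \beta < \delta\}$; the $\theta$-c.c.\ of $\bb{P}$ will force $\delta < \theta$, and the $\theta$-closure of $\bb{Q}$ will supply lower bounds at each limit stage $\alpha < \theta$ (take $q^\alpha_i$ below every $q^\beta_i$ with $\beta < \alpha$).

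At a successor stage $\beta$, I look for some $p^* \leq p$ incompatible with every $p_\gamma$ for $\gamma < \beta$. If none exists, the construction halts with $\delta := \beta$ and $\{p_\gamma \mid \gamma < \beta\}$ maximal below $p$. Otherwise, the third bullet of the setup forces $q$ to decide, in the $\bb{P}$-extension, whether $\dot{b}_{\gamma, \eta}$ is bounded (i.e.\ contained in $\dot{S}_{<\alpha^*}$) or cofinal, so by density I may extend $p^*$ to some $\bar{p}$ deciding this disjunction. If $(\bar{p}, q) \Vdash ``\dot{b}_{\gamma, \eta}$ is not cofinal,'' set $p_\beta := \bar{p}$ and $q^{\beta+1}_i := q^\beta_i$. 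If instead $(\bar{p}, q) \Vdash ``\dot{b}_{\gamma, \eta}$ is cofinal,'' I apply Claim \ref{splitclaim1} with $\bar{p}$ in place of $p'$ and with the current $q^\beta_0, q^\beta_1$ in place of $q_0, q_1$; this produces $p_\beta \leq \bar{p}$, $q^{\beta+1}_i \leq q^\beta_i$, and $u^\beta_0, u^\beta_1 \in \lambda \times \kappa$ with $(p_\beta, q^{\beta+1}_i) \Vdash ``u^\beta_i \in \dot{b}_{\gamma, \eta}''$ and $p_\beta \Vdash ``u^\beta_0 \perp_{\dot{R}_\eta} u^\beta_1.''$

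The verification is then automatic. By construction $A$ is a maximal antichain below $p$ of size $\delta < \theta$, and $q'_i := q^\delta_i \leq q_i$ exists by the $\theta$-closure of $\bb{Q}$. For each $p_\beta \in A$, either $(p_\beta, q) \Vdash ``\dot{b}_{\gamma, \eta}$ is not cofinal''---the first disjunct of the conclusion---or, since $q'_i \leq q^{\beta+1}_i$, we have $(p_\beta, q'_i) \Vdash ``u^\beta_i \in \dot{b}_{\gamma, \eta}''$ together with $p_\beta \Vdash ``u^\beta_0 \perp_{\dot{R}_\eta} u^\beta_1,''$ giving the second disjunct.

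No serious obstacle arises: the two sides of the recursion dovetail because the $\theta$-c.c.\ of $\bb{P}$ bounds the length of the construction strictly below the closure parameter of $\bb{Q}$. The work done in Claim \ref{splitclaim1}, which strengthens both the $\bb{P}$- and $\bb{Q}$-conditions at once, is exactly what is needed at each step of a standard maximality recursion building the $\bb{P}$-antichain.
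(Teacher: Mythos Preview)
Your proposal is correct and follows essentially the same approach as the paper: build a maximal antichain below $p$ by recursion of length $<\theta$ (using the $\theta$-c.c.\ of $\bb{P}$), at each stage either recording a condition that forces $\dot{b}_{\gamma,\eta}$ to be non-cofinal or invoking Claim~\ref{splitclaim1} to split, while simultaneously decreasing the two $\bb{Q}$-conditions using the $\theta$-closure of $\bb{Q}$. The only differences are cosmetic bookkeeping (the paper also checks for maximality at limit stages and handles the globally non-cofinal case separately with $A=\{p\}$), not substance.
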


	\begin{proof}
		If $(p,q) \Vdash ``\dot{b}_{\gamma, \eta}$ is not cofinal," we may simply take $A = \{p\}$ and, for $i < 2$, $q'_i = q_i$. Otherwise, we recursively attempt to construct $\{p_\xi \mid \xi < \theta\}$ and $\{q^\xi_i \mid \xi < \theta, i < 2\}$, with $\{p_\xi \mid \xi < \theta\}$ an antichain in $\bb{P}$ below $p$ and, for $i < 2$, $\langle q^\xi_i \mid \xi < \theta \rangle$ a decreasing sequence of conditions in $\bb{Q}$ below $q_i$. To start, let $p'_0 \leq p$ be such that $(p'_0, q) \Vdash ``\dot{b}_{\gamma, \eta}$ is cofinal." Then apply Claim \ref{splitclaim1} to $p'_0$ and $q_i$ for $i < 2$ to obtain $p_0 \leq p'_0$, $q^0_i \leq q_i$, and $u^0_i \in \lambda \times \kappa$ for $i < 2$. 

		Suppose $\xi < \theta$ and we have defined $\{p_\zeta \mid \zeta \leq \xi\}$ and $\{q^\zeta_i \mid \zeta \leq \xi, i < 2\}$. If $\{p_\zeta \mid \zeta \leq \xi\}$ is a maximal antichain below $p$, then stop the construction, let $A = \{p_\zeta \mid \zeta \leq \xi\}$, and, for $i < 2$, let $q'_i = q^\xi_i$. Otherwise, find $p'_{\xi+1} \leq p$ such that $p'_{\xi+1}$ is incompatible with $p_\zeta$ for every $\zeta \leq \xi$. If $(p'_{\xi+1},q) \Vdash ``\dot{b}_{\gamma, \eta}$ is not cofinal," then let $p_{\xi+1} = p'_{\xi+1}$ and, for $i < 2$, let $q^{\xi+1}_i = q^\xi_i$. Otherwise, find $p''_{\xi+1} \leq p'_{\xi+1}$ such that $(p''_{\xi+1}, q) \Vdash ``\dot{b}_{\gamma, \eta}$ is cofinal," and apply Claim \ref{splitclaim1} to $p''_{\xi+1}$ and $q^\xi_i$ for $i < 2$ to obtain $p_{\xi+1} \leq p''_{\xi+1}$, $q^{\xi+1}_i \leq q^\xi_i$, and $u^{\xi+1}_i \in \lambda \times \kappa$ for $i < 2$.

		Finally, suppose $\xi < \theta$ is a limit ordinal and we have defined $\{p_\zeta \mid \zeta < \xi\}$ and $\{q^\zeta_i \mid \zeta < \xi, i < 2\}$. For $i < 2$, let $q^*_i$ be a lower bound for $\{q^\zeta_i \mid \zeta < \xi\}$. If $\{p_\zeta \mid \zeta < \xi\}$ is a maximal antichain below $p$, then stop the construction, let $A = \{p_\zeta \mid \zeta < \xi\}$, and, for $i < 2$, let $q'_i = q^*_i$. Otherwise, proceed as in the successor case (working with $q^*_i$ instead of the $q^\xi_i$ from the previous paragraph) to define $p_\xi$ and $q^\xi_i$ for $i < 2$.

		Since $\bb{P}$ has the $\theta$-c.c., our construction must halt at some stage $\xi < \theta$. It is easy to verify that $A$ and $q'_i$ for $i < 2$ are as desired.
	\end{proof}

	\begin{claim} \label{splitclaim3}
		Suppose $q' \leq q$. Then there are $q^*_0, q^*_1 \leq q'$ such that, for every $(\gamma, \eta) \in \kappa \times \nu$, there is a maximal antichain $A_{\gamma, \eta}$ of conditions below $p$ such that, for all $p' \in A_{\gamma, \eta}$, either $(p',q) \Vdash ``\dot{b}_{\gamma, \eta}$ is not cofinal" or there are $u_i \in \lambda \times \kappa$ for $i < 2$ such that:
		\begin{itemize}
			\item{for $i < 2$, $(p', q^*_i) \Vdash ``u_i \in \dot{b}_{\gamma, \eta}"$;}
			\item{$p' \Vdash ``u_0 \perp_{\dot{R}_\eta} u_1."$}
		\end{itemize}
	\end{claim}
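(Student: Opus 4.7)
The plan is to apply Claim \ref{splitclaim2} iteratively along an enumeration of $\kappa \times \nu$, building decreasing sequences of conditions in $\bb{Q}$ in parallel on each of the two coordinates, and taking a common lower bound at the end.

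More precisely: enumerate $\kappa \times \nu$ as $\langle (\gamma_\xi, \eta_\xi) \mid \xi < \epsilon \rangle$, which is possible since $|\kappa \times \nu| \leq \epsilon = \max(\{\kappa, \nu\}) < \theta$. I would then recursively construct, for $i < 2$, a decreasing sequence $\langle q^\xi_i \mid \xi \leq \epsilon \rangle$ of conditions in $\bb{Q}$ below $q'$, together with maximal antichains $A_{\gamma_\xi, \eta_\xi}$ below $p$, as follows. Set $q^0_0 = q^0_1 = q'$. At successor stages $\xi + 1$, apply Claim \ref{splitclaim2} to the pair $(\gamma_\xi, \eta_\xi)$ and the conditions $q^\xi_0, q^\xi_1$ to obtain $q^{\xi+1}_i \leq q^\xi_i$ for $i < 2$ and a maximal antichain $A_{\gamma_\xi, \eta_\xi}$ of conditions below $p$ witnessing the conclusion of Claim \ref{splitclaim2} with respect to $q^{\xi+1}_0, q^{\xi+1}_1$. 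At limit stages $\xi \leq \epsilon$, use the $\theta$-closure of $\bb{Q}$ together with the fact that $\xi \leq \epsilon < \theta$ to choose, for each $i < 2$, a lower bound $q^\xi_i$ of $\langle q^\zeta_i \mid \zeta < \xi \rangle$. Finally, set $q^*_i = q^\epsilon_i$ for $i < 2$.

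To verify that $q^*_0, q^*_1$ and the antichains $A_{\gamma, \eta}$ have the desired property, fix $(\gamma, \eta) \in \kappa \times \nu$ and let $\xi < \epsilon$ be such that $(\gamma_\xi, \eta_\xi) = (\gamma, \eta)$. By construction, $A_{\gamma, \eta}$ is a maximal antichain below $p$ such that, for every $p' \in A_{\gamma, \eta}$, either $(p', q) \Vdash ``\dot{b}_{\gamma, \eta}$ is not cofinal$"$ or there are $u_0, u_1 \in \lambda \times \kappa$ with $(p', q^{\xi+1}_i) \Vdash ``u_i \in \dot{b}_{\gamma, \eta}"$ for $i < 2$ and $p' \Vdash ``u_0 \perp_{\dot{R}_\eta} u_1."$ Since $q^*_i \leq q^{\xi+1}_i$ for $i < 2$, the forcing statement $(p', q^*_i) \Vdash ``u_i \in \dot{b}_{\gamma, \eta}"$ follows immediately, and the incompatibility statement is unchanged. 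This yields the claim.

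The only real point requiring care is the limit stage of the recursion, where one needs to ensure that a common lower bound exists; this is guaranteed by the $\theta$-closure of $\bb{Q}$ and the bound $\epsilon < \theta$, which is built into the hypothesis $\mathrm{width}(S) < \theta$. Everything else is a straightforward bookkeeping exercise built on top of Claim \ref{splitclaim2}.
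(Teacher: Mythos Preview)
Your proof is correct and follows essentially the same approach as the paper: enumerate $\kappa \times \nu$ in order type $\epsilon$, apply Claim~\ref{splitclaim2} at each successor step to obtain the antichains and extend the two sequences, take lower bounds at limits using the $\theta$-closure of $\bb{Q}$, and let $q^*_0, q^*_1$ be the final lower bounds. The only cosmetic difference is that you carry the recursion up to $\xi = \epsilon$ and set $q^*_i = q^\epsilon_i$, whereas the paper takes a separate lower bound after the length-$\epsilon$ sequence; you also spell out the verification that $q^*_i \leq q^{\xi+1}_i$ suffices, which the paper leaves implicit.
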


	\begin{proof}
		Let $\{(\gamma_\xi, \eta_\xi) \mid \xi < \epsilon\}$ be an enumeration of $\kappa \times \nu$. For $i < 2$, define decreasing sequences $\langle q^\xi_i \mid \xi < \epsilon \rangle$ of conditions in $\bb{Q}$ as follows. Let $q^0_0 = q^0_1 = q'$. If $\xi < \epsilon$ is a limit ordinal and $\langle q^\zeta_i \mid \zeta < \xi, i < 2 \rangle$ has been defined, let $q^\xi_i$ be a lower bound for $\langle q^\zeta_i \mid \zeta < \xi \rangle$. If $\xi < \epsilon$ and $q^\xi_i$ has been defined for $i < 2$, then apply Claim \ref{splitclaim2} to $(\gamma_\xi, \eta_\xi)$ and $q^\xi_i \leq q$ to obtain $q^{\xi+1}_i \leq q^\xi_i$ and a maximal antichain $A_{\gamma_\xi, \eta_\xi}$ of conditions below $p$. Finally, for $i < 2$, let $q^*_i$ be a lower bound for $\langle q^\xi_i \mid \xi < \epsilon \rangle$. It is clear that $q^*_i$ for $i < 2$ are as desired, as witnessed by the antichains $\{A_{\gamma_\xi, \eta_\xi} \mid \xi < \epsilon\}$.
	\end{proof}

	We now construct, in $V$, a decreasing sequence $\langle q_\xi \mid \xi < \theta \rangle$ of conditions from $\bb{Q}$, together with conditions $\{r_{\xi + 1} \mid \xi < \theta \}$, also in $\bb{Q}$. To start, let $q_0 = q$. If $\xi < \theta$ is a limit ordinal and $\langle q_\zeta \mid \zeta < \xi \rangle$ has been defined, let $q_\xi$ be a lower bound for $\langle q_\zeta \mid \zeta < \xi \rangle$. If $\xi < \theta$ and $q_\xi$ has been defined, apply Claim \ref{splitclaim3} to $q_\xi$ to obtain $q_{\xi+1}, r_{\xi+1} \leq q_\xi$ and, for every $(\gamma, \eta) \in \kappa \times \nu$, a maximal antichain $A^\xi_{\gamma, \eta}$ of conditions from $\bb{P}$ below $p$. Find $\beta^* < \lambda$ large enough so that $\beta^* > \alpha^*$ and, for all $\xi < \theta$, all $(\gamma, \eta) \in \kappa \times \nu$, and all $p' \in A^\xi_{\gamma, \eta}$, either $(p',q) \Vdash ``\dot{b}_{\gamma, \eta}$ is not cofinal" or there are $u_0, u_1 \in S_{<\beta^*}$ such that:
	\begin{itemize}
		\item{$(p', q_{\xi+1}) \Vdash ``u_0 \in \dot{b}_{\gamma, \eta}"$;}
		\item{$(p', r_{\xi+1}) \Vdash ``u_1 \in \dot{b}_{\gamma, \eta}"$;}
		\item{$p' \Vdash ``u_0 \perp_{\dot{R}_\eta} u_1."$}
	\end{itemize}

	Let $G$ be $\bb{P}$-generic over $V$ with $p \in G$, and move to $V[G]$. For $\eta < \nu$, let $R_\eta$ be the realization of $\dot{R}_\eta$. For all $\xi < \theta$, find $r'_{\xi+1} \leq r_{\xi+1}$, $\delta_{\xi+1}, \gamma_{\xi+1} < \kappa$, and $\eta_{\xi+1} < \nu$ such that $r'_{\xi+1} \Vdash ``(\beta^*, \delta_{\xi+1}) \in \dot{b}_{\gamma_{\xi+1}, \eta_{\xi+1}}."$ Since $\theta > \epsilon$ and $\theta$ remains regular in $V[G]$, we can find $\zeta < \xi < \theta$, $\delta^*, \gamma^* < \kappa$, and $\eta^* < \nu$ such that $\delta_{\zeta+1} = \delta_{\xi+1} = \delta^*$, $\gamma_{\zeta+1} = \gamma_{\xi+1} = \gamma^*$, and $\eta_{\zeta+1} = \eta_{\xi+1} = \eta^*$. Find $p' \in A^\zeta_{\gamma^*, \eta^*} \cap G$. Since, in $V[G]$, $r'_{\zeta + 1} \Vdash ``(\beta^*, \delta^*) \in \dot{b}_{\gamma^*, \eta^*} \cap S_{\geq \alpha^*}"$, it cannot be the case that, in $V$, $(p',q) \Vdash ``\dot{b}_{\gamma^*, \eta^*}$ is not cofinal." Therefore, there are $u_0, u_1 \in S_{<\beta^*}$ such that:
	\begin{itemize}
		\item{$(p', q_{\zeta+1}) \Vdash ``u_0 \in \dot{b}_{\gamma^*, \eta^*}"$;}
		\item{$(p', r_{\zeta+1}) \Vdash ``u_1 \in \dot{b}_{\gamma^*, \eta^*}"$;}
		\item{$p' \Vdash ``u_0 \perp_{\dot{R}_{\eta^*}} u_1."$}
	\end{itemize}
	Since $p' \in G$, $r'_{\zeta + 1} \leq r_{\zeta + 1}$, and $r'_{\xi+1} \leq r_{\xi+1} \leq q_\xi \leq q_{\zeta + 1}$, we have the following in $V[G]$:
	\begin{enumerate}
		\item{$u_0 \perp_{R_{\eta^*}} u_1$;}
		\item{$r'_{\xi+1} \Vdash ``u_0, (\beta^*, \delta^*) \in \dot{b}_{\gamma^*, \eta^*}"$;}
		\item{$r'_{\zeta+1} \Vdash ``u_1, (\beta^*, \delta^*) \in \dot{b}_{\gamma^*, \eta^*}"$.}
	\end{enumerate}
	By (2), (3), and the fact that $R_{\eta^*}$ is tree-like, we have that $u_0$ and $u_1$ are $R_{\eta^*}$-comparable, but this contradicts (1), so we are finished.
\end{proof}

We also make note for future use of the following improvement of Sinapova's lemma, due to Neeman. A proof can be found in \cite{neeman}.

\begin{lemma} \label{neemanlemma}
	Suppose that $\lambda$ is a regular, uncountable cardinal, $S = \langle I \times \kappa, \mathcal{R} \rangle$ is a narrow $\lambda$-system, and $\mathrm{width}(S) = \theta$. Suppose $\bb{P}$ is a forcing poset, and let $\bb{P}^{\theta^+}$ denote the full-support product of $\theta^+$ copies of $\bb{P}$. Suppose moreover that $\bb{P}^{\theta^+}$ is $\theta^{++}$-distributive, $G$ is $\bb{P}$-generic over $V$, and, in $V[G]$, there is a full set of branches through $S$. Then there is a cofinal branch through $S$ in $V$.
\end{lemma}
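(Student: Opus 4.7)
The plan is to suppose for contradiction that $S$ has no cofinal branch in $V$ and derive a contradiction by using the product $\bb{P}^{\theta^+}$ to produce many mutually generic copies of a cofinal branch, and then using $\theta^{++}$-distributivity to reflect enough structure back into $V$.

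By hypothesis $V[G]$ has a full set of branches through $S$, and the remark after that definition guarantees that one of them is cofinal; fix a $\bb{P}$-name $\dot{b}$ and a condition $p_0 \in \bb{P}$ such that $p_0 \Vdash ``\dot{b}$ is a cofinal branch through some fixed $R \in \mathcal{R}$." I would then force with $\bb{P}^{\theta^+}$ below the condition that is $p_0$ on every coordinate. The resulting generic decomposes as a sequence $\langle H_\alpha \mid \alpha < \theta^+ \rangle$ of mutually $V$-generic filters for $\bb{P}$, yielding cofinal branches $b_\alpha := \dot{b}[H_\alpha]$ through $R$ in $V[\bar H]$. Since $V[H_\alpha] \cap V[H_\beta] = V$ by mutual genericity, and $V$ has no cofinal branch by assumption, the $b_\alpha$'s are pairwise distinct; tree-likeness of $R$ then implies that once two of them disagree at a level where both are nonempty, they remain disagreed at all higher levels where both are nonempty.

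For each $\xi \in I$, define $f_\xi : \theta^+ \to S_\xi \cup \{*\}$ by $f_\xi(\alpha) = b_\alpha \cap S_\xi$, with value $*$ if the intersection is empty. This takes values in a set in $V$ of size at most $\kappa + 1 \leq \theta < \theta^{++}$, so the $\theta^{++}$-distributivity of $\bb{P}^{\theta^+}$ yields $f_\xi \in V$. At each $\xi$ where some $b_\alpha$ hits $S_\xi$, a pigeonhole in $V$ on the at most $\theta$ many values produces some $u_\xi \in S_\xi$ with $A_\xi := f_\xi^{-1}(u_\xi)$ of cardinality $\theta^+$. Using the split-persistence of the $b_\alpha$'s, one shows that the equivalence relations on $\theta^+$ given by agreement at $\xi$ refine as $\xi$ increases, and from this, a suitable cofinal subsequence of the $u_\xi$'s forms an $R$-chain in $V$ that is a cofinal branch through $S$, yielding the desired contradiction.

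The main technical obstacle is coherence: although each $f_\xi$ individually lies in $V$, the sequence $\langle f_\xi \mid \xi \in I \rangle$ has length $\lambda > \theta^+$ and a priori need not lie in $V$, so one cannot simply work with the realized sequence. The coherence argument must instead be carried out at the level of $\bb{P}^{\theta^+}$-names: one invokes $\theta^{++}$-distributivity repeatedly to arrange that a sufficiently strong condition in the product decides the $f_\xi$ for enough $\xi$ to extract the branch definably in $V$. This is precisely the point at which Neeman's use of the full-support product improves on the two-step iteration of $\bb{P}$ with a $\theta$-closed $\bb{Q}$ in Lemma \ref{preservationtheorem}, replacing closure of a factor by distributivity of the product.
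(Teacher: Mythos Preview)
The paper does not prove this lemma; it merely cites Neeman, so there is nothing to compare your sketch against. On its own merits, your strategy---use the product to generate $\theta^+$ mutually generic copies and exploit $\theta^{++}$-distributivity---is the right one, and the mutual-genericity fact you invoke (that $V[H_\alpha]\cap V[H_\beta]$ contains no new sets of ordinals, so the $b_\alpha$'s are pairwise distinct) is correct. But there are two real gaps.

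First, by fixing a single cofinal branch $\dot b$ rather than the full set, you lose the property that every level is hit. Your pigeonhole at level $\xi$ can land on the value $*$: it may well be that only $\le\theta$ of the $b_\alpha$'s meet $S_\xi$ while $\theta^+$ of them miss it, so no $u_\xi$ with $|A_\xi|=\theta^+$ need exist. Your refinement claim fails for the same reason: if $f_{\xi'}(\alpha)=f_{\xi'}(\beta)=*$, nothing forces $f_\xi(\alpha)=f_\xi(\beta)$ for $\xi<\xi'$. You must work with the full set $\bar b^\alpha$ at each coordinate, which guarantees that every level is hit. Second, the endgame is not to extract a branch in $V$ but to reach a direct counting contradiction. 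Using $\theta^{++}$-distributivity (there are only $\theta^+$ tasks), find a condition $\bar q$ and a level $\xi^*$ such that: (i) for each $\alpha$ and each branch index $i$, $\bar q$ decides whether $b^\alpha_i$ is cofinal, and if not, forces it bounded below $\xi^*$; (ii) for each $\alpha\neq\beta$ and each pair $i,j$ of cofinal-branch indices through the same $R$, $\bar q$ forces $b^\alpha_i$ and $b^\beta_j$ to contain $R$-incomparable nodes below level $\xi^*$ (possible since neither branch lies in $V$; this is the splitting argument of Claim~\ref{splitclaim1}). Now fix $\xi\in I$ with $\xi>\xi^*$ and, extending $\bar q$ once more by distributivity, decide for each $\alpha$ an index $i_\alpha$, relation $R_\alpha$, and node $u_\alpha\in S_\xi$ with $u_\alpha\in b^\alpha_{i_\alpha}$ (this exists by fullness, and since $\xi>\xi^*$ the branch $b^\alpha_{i_\alpha}$ must be cofinal). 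Pigeonhole on $(u_\alpha,R_\alpha)\in S_\xi\times\mathcal R$, a set of size $\le\theta$, yields $\alpha\neq\beta$ with $u_\alpha=u_\beta$ and $R_\alpha=R_\beta$; but then $b^\alpha_{i_\alpha}$ and $b^\beta_{i_\beta}$ are cofinal through the same relation and agree at level $\xi>\xi^*$, contradicting (ii).
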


We show now that cofinal branches cannot be added to systems by forcing posets satisfying the appropriate approximation property.

\begin{definition}
	Let $\lambda$ be a regular cardinal, and let $\bb{P}$ be a forcing poset. $\bb{P}$ has the \emph{$\lambda$-approximation property} if, for every $y \in V$ and every $\bb{P}$-name $\dot{x}$ for a subset of $y$ such that, for all $z \in (\mathcal{P}_\lambda(y))^V$, $\Vdash_{\bb{P}}``\dot{x} \cap z \in V"$, we have $\Vdash_{\bb{P}} \dot{x} \in V$. 
\end{definition}

\begin{lemma} \label{approxbranchprop}
	Suppose $\lambda$ is a regular cardinal, $S = \langle \{\{\alpha\} \times \kappa_\alpha \mid \alpha \in I\}, \mathcal{R} \rangle$ is a $\lambda$-system, and $\bb{P}$ has the $\lambda$-approximation property. If $G$ is $\bb{P}$-generic over $V$ and, in $V[G]$, $S$ has a cofinal branch, then $S$ has a cofinal branch in $V$.
\end{lemma}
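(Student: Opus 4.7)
The plan is to apply the $\lambda$-approximation property directly to a name for a cofinal branch. Suppose $G$ is $\bb{P}$-generic over $V$ and $b \in V[G]$ is a cofinal branch through some $R \in \mathcal{R}$. I first replace $b$ by its downward $R$-closure $b^* = b \cup \{v \in S \mid \exists w \in b \; v <_R w\}$; tree-likeness of $R$ ensures $b^*$ is still an $R$-chain (if $v_1, v_2 \in b^*$ lie below elements $w_1 \leq_R w_2$ of $b$, tree-likeness applied to $v_1, v_2 <_R w_2$ makes them comparable), and it is still cofinal since $b \subseteq b^*$. Without loss of generality, then, assume $b$ is downward $R$-closed. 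Let $y = \bigcup_{\alpha \in I} S_\alpha \in V$, fix $p \in G$ and a name $\dot{b}$ such that $p$ forces $\dot{b}$ to be a downward-closed cofinal branch through $R$, and pad $\dot{b}$ by $\emptyset$ below conditions incompatible with $p$ so that $\Vdash_{\bb{P}} \dot{b} \subseteq y$ outright.

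To invoke the $\lambda$-approximation property on $\dot{b}$, I must show that $\Vdash_{\bb{P}} \dot{b} \cap z \in V$ for every $z \in (\mathcal{P}_\lambda(y))^V$. Fix such a $z$. Since $|z| < \lambda$ and $\lambda$ is regular, the first-coordinate projection of $z$ is bounded in $\lambda$, so there is $\alpha^* < \lambda$ with $z \subseteq S_{<\alpha^*}$. In any extension $V[G']$, if $\dot{b}^{G'} = \emptyset$ the claim is trivial; otherwise $\dot{b}^{G'}$ is a downward-closed cofinal branch, so I may pick any $u \in \dot{b}^{G'} \cap S_{\geq \alpha^*}$, and the claim is that
\[
\dot{b}^{G'} \cap z = \{v \in z \mid v <_R u\}.
\]
For the $\subseteq$ direction: any $v \in \dot{b}^{G'} \cap z$ is $R$-comparable with $u$, and clause (3) of the definition of a system together with $v \in S_{<\alpha^*}$ and $u \in S_{\geq \alpha^*}$ rules out both $v = u$ and $u <_R v$, forcing $v <_R u$. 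For the $\supseteq$ direction: downward-closedness of $\dot{b}^{G'}$ gives $v \in \dot{b}^{G'}$ directly. The right-hand side is computed from $z$, $R$, and the single element $u$, each of which is an element of $V$, so $\dot{b}^{G'} \cap z \in V$.

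By the $\lambda$-approximation property, $\Vdash_{\bb{P}} \dot{b} \in V$, and the realization in $V[G]$ then is a cofinal branch lying in $V$, as required. The one subtle point is the reduction to a downward-closed branch in the first paragraph: without it, the $\supseteq$ inclusion can fail — a chain may omit elements of $\{v \in z \mid v <_R u\}$ — and $b \cap z$ need not be reconstructible inside $V$. This step is harmless because $b^*$ is definable in $V[G]$ from $b$ and $R$, and the lemma only asks for the existence of some cofinal branch in $V$.
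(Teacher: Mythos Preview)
Your proof is correct and follows essentially the same approach as the paper: take the downward $R$-closure of the branch, then for any $z \in (\mathcal{P}_\lambda(S))^V$ pick a witness $u$ above $z$ and recover $b \cap z$ as $\{v \in z \mid v <_R u\} \in V$. You are merely more explicit than the paper about why the downward closure remains a chain, about padding the name so the hypothesis of the approximation property holds outright, and about the two inclusions; the underlying idea is identical.
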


\begin{proof}
	In $V[G]$, suppose $b \subseteq S$ is a cofinal branch through $R \in \mathcal{R}$. By closing $b$ downward, we may assume that $b$ is a maximal branch, i.e., if $\alpha \in I$ and $v \in b \cap S_\alpha$, then $b \cap S_{<\alpha} = \{u \in S \mid u <_R v\}$. It suffices to show that $b \cap z \in V$ for all $z \in (\mathcal{P}_\lambda(S))^V$, as then the $\lambda$-approximation property will imply that $b \in V$.

	To this end, let $z \in (\mathcal{P}_\lambda(S))^V$. As $\lambda$ is regular, there is $\alpha < \lambda$ such that $z \subseteq S_{<\alpha}$. Find $v \in b \cap S_{\geq \alpha}$. Then $b \cap z = \{u \in S \mid u <_R v\} \cap z \in V$.
\end{proof}

The following lemma was proved by Unger in \cite{unger}.

\begin{lemma} \label{approxlemma}
	Suppose $\lambda$ is a regular cardinal, $\bb{P}$ is a forcing poset, and $\bb{P} \times \bb{P}$ has the $\lambda$-c.c. Then $\bb{P}$ has the $\lambda$-approximation property.
\end{lemma}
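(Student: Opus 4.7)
The plan is to prove the contrapositive by building a large antichain in $\bb{P} \times \bb{P}$. Suppose $\bb{P}$ fails the $\lambda$-approximation property: fix $y \in V$, a $\bb{P}$-name $\dot{x}$ for a subset of $y$ such that $\Vdash_{\bb{P}} ``\dot{x} \cap z \in V"$ for every $z \in (\mathcal{P}_\lambda(y))^V$, and a condition $p \in \bb{P}$ forcing $\dot{x} \notin V$. I will recursively construct, for $\alpha < \lambda$, pairs $(p^0_\alpha, p^1_\alpha) \in \bb{P} \times \bb{P}$ below $(p,p)$ and elements $\eta_\alpha \in y$ satisfying (i) $p^0_\alpha \Vdash ``\eta_\alpha \in \dot{x}"$ and $p^1_\alpha \Vdash ``\eta_\alpha \notin \dot{x}"$, and (ii) for every $\beta < \alpha$, $p^0_\alpha$ and $p^1_\alpha$ both decide ``$\eta_\beta \in \dot{x}$'' the same way. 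The pairs $(p^0_\alpha, p^1_\alpha)$ will then form an antichain of size $\lambda$ in $\bb{P} \times \bb{P}$: for $\beta < \alpha$, invariant (ii) at stage $\alpha$ combined with (i) at stage $\beta$ forces either $p^0_\alpha \perp p^0_\beta$ or $p^1_\alpha \perp p^1_\beta$, depending on how $p^0_\alpha$ and $p^1_\alpha$ decide $\eta_\beta$---contradicting the $\lambda$-c.c.\ of $\bb{P} \times \bb{P}$.

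The successor stage is the only substantive step, and it is where the $\lambda$-approximation hypothesis enters. Having built the construction through stage $\alpha$, set $z_\alpha := \{\eta_\beta \mid \beta < \alpha\}$; then $|z_\alpha| < \lambda$, so $\dot{x} \cap z_\alpha$ is forced to lie in $V$. This yields a maximal antichain $A \subseteq \bb{P}$ below $p$ together with sets $v_a \in V$ for $a \in A$ such that $a \Vdash ``\dot{x} \cap z_\alpha = v_a"$. I pick any $a \in A$; since $a \leq p$, $a$ forces $\dot{x} \notin V$, and so $a$ cannot decide ``$\eta \in \dot{x}$'' for every $\eta \in y$---otherwise $a$ would force $\dot{x} = \{\eta \in y \mid a \Vdash ``\eta \in \dot{x}"\} \in V$. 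Hence some $\eta \in y$ admits extensions $q_0, q_1 \leq a$ with $q_0 \Vdash ``\eta \in \dot{x}"$ and $q_1 \Vdash ``\eta \notin \dot{x}"$, and I set $p^0_\alpha := q_0$, $p^1_\alpha := q_1$, and $\eta_\alpha := \eta$. Clause (i) is immediate; clause (ii) holds because $q_0, q_1 \leq a$ both force $\dot{x} \cap z_\alpha = v_a$, so they agree with $a$ on the membership of every $\eta_\beta \in z_\alpha$.

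The limit stages of the recursion require no bookkeeping, since invariant (ii) at stage $\alpha$ refers only to earlier stages individually and imposes no coherence condition across stages. The main obstacle I expect is the conceptual one of identifying invariant (ii): it must be strong enough to force pairwise incompatibility of the pairs $(p^0_\alpha, p^1_\alpha)$ yet compatible with the approximation hypothesis, and the hinge is the observation that passing below a single $a \in A$ that decides $\dot{x} \cap z_\alpha$ automatically enforces (ii) for all $\beta < \alpha$ at once.
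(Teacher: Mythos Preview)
The paper does not prove this lemma; it simply cites Unger \cite{unger}. Your argument is correct and is essentially the standard proof: assuming a witness to the failure of the $\lambda$-approximation property, you recursively build $\lambda$ many pairs $(p^0_\alpha,p^1_\alpha)$ that are pairwise incompatible in $\bb{P}\times\bb{P}$, where at each stage the approximation hypothesis lets you pass below a condition deciding $\dot{x}\cap\{\eta_\beta\mid\beta<\alpha\}$ before splitting on a new $\eta_\alpha$. The verification that these pairs form an antichain and that the recursion goes through is as you describe; note in particular that $\eta_\alpha\notin z_\alpha$ is automatic, since $a$ already decides membership in $\dot{x}$ for every element of $z_\alpha$.
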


\section{Collapsing and the narrow system property} \label{collapsingSect}

\begin{definition}
	Suppose $\lambda$ is a regular cardinal. $\lambda$ has the \emph{narrow system property} if every narrow $\lambda$-system has a cofinal branch.
\end{definition}

The narrow system property is a useful tool for analyzing trees and strong systems. For more on the narrow system property, see \cite{narrowsystems}. In this section, we prove a technical lemma about the narrow system property in a forcing extension by a product of Levy collapses.

\begin{lemma} \label{collapselemma}
	Suppose $\langle \kappa_n \mid n < \omega \rangle$ is an increasing sequence of indestructibly supercompact cardinals. Let $\mu = \sup(\{\kappa_n \mid n < \omega\})$, and let $\lambda = \mu^+$. For $m < \omega$, let $\bb{S}_m$ be the full-support product $\prod_{n \geq m} \mathrm{Coll}(\kappa^{+2}_n, < \kappa_{n+1})$. Then, in $V^{\bb{S}_m}$, $\lambda$ has the narrow system property.
\end{lemma}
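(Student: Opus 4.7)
Let $G_m$ be $\bb{S}_m$-generic over $V$ and let $S = \langle I \times \kappa, \mathcal{R} \rangle$ be a narrow $\lambda$-system in $V[G_m]$; by the remark on the general form of narrow systems we may assume a common level width $\kappa$. Set $\theta := \max(\kappa, |\mathcal{R}|)$. Since $\theta^+ < \lambda = \mu^+$, we have $\theta < \mu$, so fix $n \geq m$ with $\theta^+ < \kappa_n$. The plan is to factor $\bb{S}_m$ so as to isolate the supercompact cardinal $\kappa_{n+1}$, use its supercompactness to produce a cofinal branch of $S$ in a generic extension of $V[G_m]$, and then descend the branch back to $V[G_m]$ via Lemma \ref{neemanlemma}. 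Concretely, write $\bb{S}_m \cong \bb{T} \times \bb{C} \times \bb{U}'$, where $\bb{T} = \prod_{m \leq k < n} \mathrm{Coll}(\kappa_k^{+2}, <\kappa_{k+1})$ has size $\kappa_n$, $\bb{C} = \mathrm{Coll}(\kappa_n^{+2}, <\kappa_{n+1})$, and $\bb{U}' = \prod_{k \geq n+1} \mathrm{Coll}(\kappa_k^{+2}, <\kappa_{k+1})$ is $\kappa_{n+1}^{+2}$-directed-closed in $V$; split $G_m = G^< \times H \times H'$ correspondingly.

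Since $\kappa_{n+1}$ is indestructibly supercompact in $V$ and $\bb{U}'$ is $\kappa_{n+1}^{+2}$-directed-closed, $\kappa_{n+1}$ remains supercompact in $V[H']$; and since $|\bb{T}| < \kappa_{n+1}$, Levy-Solovay further guarantees that $\kappa_{n+1}$ is $\lambda$-supercompact in $V[G^<][H']$, witnessed by some $j : V[G^<][H'] \to M$ with $\mathrm{crit}(j) = \kappa_{n+1}$, $j(\kappa_{n+1}) > \lambda$, and $M^\lambda \subseteq M$. Using the factorization $j(\bb{C}) \cong \bb{C} \times \bb{P}$ with $\bb{P} = \mathrm{Coll}(\kappa_n^{+2}, [\kappa_{n+1}, j(\kappa_{n+1})))$, and noting that $j \upharpoonright \bb{C}$ is the identity, any $\bb{P}$-generic $H^*$ over $V[G_m]$ lifts $j$ to $\tilde j : V[G_m] \to M[H \times H^*]$ in $V[G_m][H^*]$. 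Working in $V[G_m][H^*]$, pick $\gamma \in \tilde j(I)$ with $\gamma > \sup \tilde j[\lambda]$ (possible since $\mathrm{crit}(\tilde j) < \lambda$ gives $\sup \tilde j[\lambda] < \tilde j(\lambda)$). For each $\alpha \in I$, clause (4) applied inside $\tilde j(S)$ supplies $\beta_\alpha, \delta_\alpha < \kappa$ and $R_\alpha \in \mathcal{R}$ with $(\tilde j(\alpha), \beta_\alpha) <_{\tilde j(R_\alpha)} (\gamma, \delta_\alpha)$; since $\theta < \mathrm{crit}(\tilde j)$ and $\lambda$ is regular, pigeonhole yields a cofinal $I^* \subseteq I$ and a fixed triple $(\beta^*, \delta^*, R^*)$ such that $(\tilde j(\alpha), \beta^*) <_{\tilde j(R^*)} (\gamma, \delta^*)$ for every $\alpha \in I^*$. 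Tree-likeness of $\tilde j(R^*)$ combined with elementarity then shows that $b := \{(\alpha, \beta^*) \mid \alpha \in I^*\}$ is a cofinal branch through $R^*$ of the narrow subsystem $S' = \langle I^* \times \kappa, \mathcal{R} \rangle$, and hence is a full set of branches for $S'$, in $V[G_m][H^*]$.

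Finally, in $V[G_m]$, $\bb{P}$ is $\kappa_n^{+2}$-directed-closed, so its full-support product $\bb{P}^{\theta^+}$ is also $\kappa_n^{+2}$-directed-closed and, since $\theta^+ < \kappa_n$, is therefore $\theta^{++}$-distributive. Lemma \ref{neemanlemma} applied to $S'$ then produces a cofinal branch of $S'$, and hence of $S$, in $V[G_m]$. The main technical hurdle is the lifting of $j$ through $\bb{C}$ and verifying that $\bb{P}$ has precisely the right closure to satisfy the hypotheses of Lemma \ref{neemanlemma}; the indestructible supercompactness of $\kappa_{n+1}$ is what makes the whole strategy work.
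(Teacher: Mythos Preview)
Your overall strategy---factor $\bb{S}_m$ to isolate a supercompact $\kappa_{n+1}$, lift a $\lambda$-supercompactness embedding through the collapse $\bb{C}$, read off branches from the top of the lifted system, and reflect them down via a preservation lemma---is exactly the paper's approach. The factoring, the indestructibility/Levy--Solovay step, and the lift are all fine.

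There is, however, a genuine gap in the final reflection step. You pigeonhole inside $V[G_m][H^*]$ to obtain the cofinal set $I^* \subseteq I$, and then apply Lemma~\ref{neemanlemma} to the subsystem $S' = \langle I^* \times \kappa, \mathcal{R}\rangle$. But Lemma~\ref{neemanlemma} requires the system to live in the ground model $V[G_m]$, and your $I^*$ is constructed using the embedding $\tilde{j}$, which only exists in $V[G_m][H^*]$. So $S'$ need not lie in $V[G_m]$, and the lemma does not apply to it. Nor can you simply apply the lemma to $S$ itself using only the single branch $b$: a cofinal branch of $S$ need not meet every level of $S$, so it does not by itself constitute a full set of branches for $S$.

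The fix is to drop the pigeonhole entirely. Having fixed $\gamma \in \tilde{j}(I)$ above $\sup \tilde{j}[\lambda]$, define for each $\delta < \kappa$ and $R \in \mathcal{R}$ the set $b_{\delta,R} = \{(\alpha,\beta) \in S : (\tilde{j}(\alpha),\beta) <_{\tilde{j}(R)} (\gamma,\delta)\}$. Tree-likeness and elementarity make each $b_{\delta,R}$ a branch through $R$, and clause~(4) of the system axioms guarantees that every level of $S$ is met by some $b_{\delta,R}$. This is a full set of branches for $S$ itself, lying in $V[G_m][H^*]$, and now Lemma~\ref{neemanlemma} applies directly to $S \in V[G_m]$ with your closure analysis of $\bb{P}^{\theta^+}$ unchanged. (The paper carries out exactly this construction of $\bar b$, and then invokes Lemma~\ref{preservationtheorem} rather than Lemma~\ref{neemanlemma}, using that $\bb{S}_{m,n^*}$ has the $\kappa_{n^*}$-c.c.\ and the tail collapse $\bb{R}$ is $\kappa_{n^*}^{+2}$-closed; either preservation lemma suffices here.)
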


\begin{proof}
	For $n > m$, let $\bb{S}_{m,n} = \prod_{m\leq k < n} \mathrm{Coll}(\kappa^{+2}_k, < \kappa_{k+1})$, and note that $\bb{S}_m \cong \bb{S}_{m,n} \times \bb{S}_{n}$. Let $\dot{S}$ be an $\bb{S}_m$-name for a narrow $\lambda$-system. Without loss of generality, we may assume there are $\kappa, \nu < \mu$ such that $\dot{S}$ is forced to be of the form $\langle \lambda \times \kappa, \{\dot{R}_\eta \mid \eta < \nu\} \rangle$. Fix $m < n^* < \omega$ such that $\kappa, \nu < \kappa_{n^*}$. Let $G_{n^*+1}$ be $\bb{S}_{n^*+1}$-generic over $V$. Note that, in $V[G_{n^*+1}]$, $\kappa_{n^*+1}$ remains supercompact. Move to $V[G_{n^*+1}]$, which we denote $V_1$, reinterpreting $\dot{S}$ as an $\bb{S}_{m,n^*+1}$-name.

	Let $j:V_1 \rightarrow M$ witness that $\kappa_{n^*+1}$ is $\lambda$-supercompact. Note that $j(\bb{S}_{m,n^*}) = \bb{S}_{m,n^*}$ and $j(\bb{S}_{n^*, n^*+1}) = \mathrm{Coll}(\kappa^{+2}_{n^*}, < j(\kappa_{n^*+1})) \cong \bb{S}_{n^*, n^*+1} * \dot{\bb{R}}$, where $\dot{\bb{R}}$ is $\kappa^{+2}_{n^*}$-closed in $V_1^{\bb{S}_{n^*, n^*+1}}$. Let $G_{n^*, n^*+1}$ be $\bb{S}_{n^*, n^*+1}$-generic over $V_1$, and let $\bb{R}$ be the interpretation of $\dot{\bb{R}}$ in $V_1[G_{n^*, n^*+1}]$. Then, letting $G_{m,n^*}$ be $\bb{S}_{m,n^*}$-generic over $V_1[G_{n^*, n^*+1}]$ and $H$ be $\bb{R}$-generic over $V_1[G_{n^*, n^*+1}][G_{m,n^*}]$, we can lift $j$ to $j:V_1[G_{n^*, n^*+1}][G_{m,n^*}] \rightarrow M[G_{n^*, n^*+1}][G_{m,n^*}][H]$.

	Let $S = \langle \lambda \times \kappa, \{R_\eta \mid \eta < \nu\} \rangle$ be the realization of $\dot{S}$ in $V_1[G_{n^*, n^*+1}][G_{m,n^*}]$. In $M[G_{n^*, n^*+1}][G_{m,n^*}][H]$, $j(S) = \langle j(\lambda) \times \kappa, \{j(R_\eta) \mid \eta < \nu\} \rangle$ is a $j(\lambda)$-system. Let $\delta = \sup(j``\lambda) < j(\lambda)$. For all $\gamma < \kappa$ and $\eta < \nu$, let $b_{\gamma, R_\eta} = \{(\alpha, \beta) \in \lambda \times \kappa \mid (j(\alpha), \beta) <_{j(R_\eta)} (\delta, \gamma)\}$. By elementarity and the fact that $j(S)$ is a system, it is easily seen that $\bar{b} := \{b_{\gamma, R_\eta} \mid \gamma < \kappa, \eta < \nu\}$ is a full set of branches through $S$ and $\bar{b} \in V_1[G_{n^*, n^*+1}][G_{m,n^*}][H]$. In $V_1[G_{n^*, n^*+1}]$, $\bb{S}_{m,n^*}$ has the $\kappa_{n^*}$-c.c. and $\bb{R}$ is $\kappa_{n^*}^{+2}$-closed, so, by Lemma \ref{preservationtheorem}, $S$ has a cofinal branch in $V_1[G_{n^*, n^*+1}][G_{m,n^*}]$.
\end{proof}

\begin{corollary}
	Assume the same hypotheses as in Lemma \ref{collapselemma}. In $V^{\bb{S}_m}$, suppose $S = \langle \lambda \times \mu, \mathcal{R} \rangle$ is a strong $\lambda$-system and $|\mathcal{R}| < \kappa_m$. Then $S$ has a cofinal branch.
\end{corollary}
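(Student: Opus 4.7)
The approach is to transcribe the ultrafilter argument from the earlier proposition on singular limits of strongly compact cardinals into $V^{\bb{S}_m}$, using Lemma \ref{collapselemma} to replace the second use of strong compactness. The crucial preparatory observation is that $\kappa_m$ remains supercompact in $V^{\bb{S}_m}$, since $\bb{S}_m$ is $\kappa_m^{+2}$-directed-closed (being the full-support product of the $\kappa_n^{+2}$-directed-closed collapses for $n\geq m$) and $\kappa_m$ is Laver-indestructibly supercompact in $V$. In particular, $\kappa_m$ is strongly compact in $V^{\bb{S}_m}$.

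Working in $V^{\bb{S}_m}$, I fix a $\kappa_m$-complete ultrafilter $U$ on $S$ extending the co-bounded filter (the latter is $\lambda$-complete, hence in particular $\kappa_m$-complete). For each $\alpha<\lambda$ and each $u\in S_{>\alpha}$, the strongness of $S$ together with $\mu=\sup_n\kappa_n$ lets me pick $(n_u^\alpha,R_u^\alpha)\in\omega\times\mathcal{R}$ such that, for some $\beta<\kappa_{n_u^\alpha}$, $(\alpha,\beta)<_{R_u^\alpha} u$. Since $|\omega\times\mathcal{R}|<\kappa_m$, two pigeonhole steps---first using $\kappa_m$-completeness of $U$ on $S_{>\alpha}\in U$, then using regularity of $\lambda$---produce an unbounded $J\subseteq\lambda$, a fixed $(n^*,R^*)\in\omega\times\mathcal{R}$, and $U$-large sets $X_\alpha\subseteq S_{>\alpha}$ for $\alpha\in J$, such that every $u\in X_\alpha$ admits an $R^*$-predecessor in $\{\alpha\}\times\kappa_{n^*}$.

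For $\alpha_0<\alpha_1$ both in $J$, the set $X_{\alpha_0}\cap X_{\alpha_1}$ is $U$-large, hence nonempty; pick $u$ in it. There are then $\beta_0,\beta_1<\kappa_{n^*}$ with $(\alpha_0,\beta_0),(\alpha_1,\beta_1)<_{R^*} u$, and the tree-like property of $R^*$ forces $(\alpha_0,\beta_0)<_{R^*}(\alpha_1,\beta_1)$. Hence $S':=\langle J\times\kappa_{n^*},\{R^*\}\rangle$ is a $\lambda$-system of width $<\mu$, i.e., a narrow $\lambda$-system. Lemma \ref{collapselemma} supplies a cofinal branch of $S'$, which is in particular a cofinal branch of $S$ (through $R^*$). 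The one substantive point to verify is the preservation of $\kappa_m$'s supercompactness through $\bb{S}_m$; beyond that, the argument tracks the earlier proposition's proof closely, simply trading the second strongly compact cardinal for an invocation of the lemma.
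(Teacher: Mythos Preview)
Your proof is correct. Both your argument and the paper's reduce to the same two-step strategy: use the largeness of $\kappa_m$ in $V^{\bb{S}_m}$ to extract from $S$ a narrow subsystem $S' = \langle J \times \kappa_{n^*}, \{R^*\} \rangle$ with a single relation and levels of width $\kappa_{n^*} < \mu$, then invoke Lemma~\ref{collapselemma} on $S'$.

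The difference is in how the narrow subsystem is obtained. You follow the earlier strongly-compact proposition almost verbatim, working with a $\kappa_m$-complete ultrafilter $U$ on $S$ extending the co-bounded filter and doing the pigeonhole in $U$. The paper instead uses the $\lambda$-supercompact embedding $j:V[G]\to M$ directly: it fixes the point $(\delta,0)$ in $j(S)$ where $\delta = \sup(j``\lambda)$, reads off for each $\alpha<\lambda$ a predecessor $(j(\alpha),\beta_\alpha)<_{j(R_\alpha)}(\delta,0)$ with $\beta_\alpha < j(\kappa_{n_\alpha})$, thins to a fixed $(R^*,n^*)$, and then reflects via elementarity to conclude that $\langle I\times\kappa_{n^*},\{R^*\}\rangle$ is a system in $V[G]$. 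Your ultrafilter argument is slightly more elementary in that it only needs strong compactness of $\kappa_m$, whereas the paper's phrasing uses the supercompact embedding; but since indestructible supercompactness is already on the table, this is a stylistic rather than a substantive gain. Conversely, the embedding approach makes the passage to a single ``generic'' node $(\delta,0)$ a bit cleaner than tracking $U$-large sets $X_\alpha$.
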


\begin{proof}
	Let $G$ be $\bb{S}_m$-generic over $V$. In $V[G]$, $\kappa_m$ remains supercompact. Let $j:V[G] \rightarrow M$ witness that $\kappa_m$ is $\lambda$-supercompact. In $M$, $j(S) = \langle j(\lambda) \times j(\mu), \{j(R) \mid R \in \mathcal{R}\} \rangle$ is a strong $j(\lambda)$-system. Let $\delta = \sup(j``\lambda)$. For $\alpha < \lambda$, find $\beta_\alpha < j(\mu)$ and $R_\alpha \in \mathcal{R}$ such that $(j(\alpha), \beta_\alpha) <_{j(R_\alpha)} (\delta, 0)$. Let $n_\alpha < \omega$ be such that $\beta_\alpha < j(\kappa_{n_\alpha})$. Since $\lambda$ is regular, we can find $R^* \in \mathcal{R}$, $n^* < \omega$, and an unbounded $I \subseteq \lambda$ such that, for all $\alpha \in I$, $R_\alpha = R^*$ and $n_\alpha = n^*$. Now, if $\alpha_0 < \alpha_1$ are both in $I$, then $(j(\alpha_0), \beta_{\alpha_0}), (j(\alpha_1), \beta_{\alpha_1}) <_{j(R^*)} (\delta, 0)$, so $(j(\alpha_0), \beta_{\alpha_0}) <_{j(R^*)} (j(\alpha_1), \beta_{\alpha_1})$. Thus, $M \models ``$there are $\beta_0, \beta_1 < j(\kappa_{n^*})$ such that $(j(\alpha_0), \beta_0) <_{j(R^*)} (j(\alpha_1), \beta_1),"$ so, by elementarity, $V[G] \models ``$there are $\beta_0, \beta_1 < \kappa_{n^*}$ such that $(\alpha_0, \beta_0) <_{R^*} (\alpha_1, \beta_1)."$ Therefore, in $V[G]$, $S' = \langle I \times \kappa_{n^*}, \{R^*\} \rangle$ is a narrow system. By Lemma \ref{collapselemma}, $S'$ has a cofinal branch, $b$, which is then also a cofinal branch of $S$.
\end{proof}

\section{Weak square and strong systems} \label{weakSquareSect}

Recall the following definition.

\begin{definition}
	Let $\lambda$ and $\mu$ be cardinals, with $\mu$ infinite and $\lambda > 1$. A $\square_{\mu, <\lambda}$-sequence is a sequence $\vec{\mathcal{C}} = \langle \mathcal{C}_\alpha \mid \alpha < \mu^+ \rangle$ such that:
	\begin{enumerate}
		\item{for all limit $\alpha < \mu^+$, if $C \in \mathcal{C}_\alpha$, then $C$ is a club in $\alpha$ and $\mathrm{otp}(C) \leq \mu$;}
		\item{for all limit $\alpha < \mu^+$, $1 \leq |\mathcal{C}_\alpha| < \lambda$;}
		\item{for all limit $\alpha < \beta < \mu^+$ and all $C \in \mathcal{C}_\beta$, if $\alpha \in C'$, then $C \cap \alpha \in \mathcal{C}_\alpha$.}
	\end{enumerate}
	$\square_{\mu, < \lambda}$ holds if there is a $\square_{\mu, < \lambda}$-sequence.
\end{definition}

\begin{remark}
	$\square_{\mu, < \lambda^+}$ is usually denoted $\square_{\mu, \lambda}$. It is immediate that, if $\lambda_0 < \lambda_1$, then $\square_{\lambda_0}$ implies $\square_{\lambda_1}$. $\square_{\mu,1}$ is Jensen's classical principle $\square_\mu$. $\square_{\mu, \mu}$ is also called \emph{weak square} and denoted $\square^*_\mu$. $\square^*_\mu$ is equivalent to the existence of a special $\mu^+$-Aronszajn tree. $\square_{\mu, \mu^+}$ is also called \emph{silly square} and holds in all models of ZFC. 
\end{remark}

We will be interested in the following variation on the classical square principles.

\begin{definition}
	Let $\kappa$, $\lambda$, and $\mu$ be cardinals, with $\kappa \leq \mu$, $\kappa$ regular, and $\lambda > 1$. A $\square^{\geq \kappa}_{\mu, <\lambda}$-sequence is a sequence $\vec{\mathcal{C}} = \langle \mathcal{C}_\alpha \mid \alpha \in S \rangle$ such that:
	\begin{enumerate}
		\item{$S^{\mu^+}_{\geq \kappa} \subseteq S \subseteq \lim(\mu^+)$;}
		\item{for all $\alpha \in S$ and all $C \in \mathcal{C}_\alpha$, $C$ is a club in $\alpha$ and $\mathrm{otp}(C) \leq \mu$;}
		\item{for all $\alpha \in S$, $1 \leq |\mathcal{C}_\alpha| < \lambda$;}
		\item{for all $\beta \in S$, for all $C \in \mathcal{C}_\beta$, and for all $\alpha \in C'$, we have $\alpha \in S$ and $C \cap \alpha \in \mathcal{C}_\alpha$.}
	\end{enumerate}
	$\square^{\geq \kappa}_{\mu, < \lambda}$ holds if there is a $\square^{\geq \kappa}_{\mu, < \lambda}$-sequence. As usual, we shall write $\square^{\geq \kappa}_{\mu, \lambda}$ instead of $\square^{\geq \kappa}_{\mu, < \lambda^+}$.
\end{definition}

Baumgartner was the first to study such square sequences. We describe now a forcing poset designed to add one. Given $\kappa$, $\lambda$, and $\mu$ as in the above definition, define a poset $\bb{B}(\kappa, \lambda, \mu)$ as follows. Conditions are of the form $p = \langle \mathcal{C}^p_\alpha \mid \alpha \in s^p \rangle$ such that:
\begin{itemize}
	\item{$s^p$ is a bounded subset of $\mu^+$ with a maximal element, which we denote $\gamma^p$;}
	\item{$\gamma^p \cap \mathrm{cof}(\geq \kappa) \subseteq s^p$;}
	\item{for all $\alpha \in s^p$ and all $C \in \mathcal{C}^p_\alpha$, $C$ is a club in $\alpha$ and $\mathrm{otp}(C) \leq \mu$;}
	\item{for all $\alpha \in s^p$, $1 \leq |\mathcal{C}^p_\alpha| < \lambda$;}
	\item{for all $\beta \in s^p$, for all $C \in \mathcal{C}^p_\beta$, and for all $\alpha \in C'$, we have $\alpha \in s^p$ and $C \cap \alpha \in \mathcal{C}_\alpha.$}
\end{itemize}

If $p,q \in \bb{B}(\kappa, \lambda, \mu)$, then $q \leq p$ iff $s^q$ end-extends $s^p$ and, for all $\alpha \in s^p$, $\mathcal{C}^q_\alpha = \mathcal{C}^p_\alpha$. 

The following is easily verified. See, for example, \cite{ac} for the details in the case $\lambda = 2$. The proof is essentially the same for other values of $\lambda$.

\begin{proposition}
	Let $\kappa$, $\lambda$, and $\mu$ be cardinals as above.
	\begin{enumerate}
		\item{$\bb{B}(\kappa, \lambda, \mu)$ is $\kappa$-directed closed.}
		\item{$\bb{B}(\kappa, \lambda, \mu)$ is $\mu+1$-strategically closed.}
		\item{$\Vdash_{\bb{B}(\kappa, \lambda, \mu)}``\square^{\geq \kappa}_{\mu, < \lambda}$ holds$."$}
	\end{enumerate}
\end{proposition}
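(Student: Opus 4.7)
I will verify the three clauses in order. For (1), given a directed $D \subseteq \bb{B}(\kappa, \lambda, \mu)$ with $|D| < \kappa$, the natural candidate lower bound is $q$ with $s^q = \bigcup_{p \in D} s^p$ and $\mathcal{C}^q_\alpha = \mathcal{C}^p_\alpha$ for any $p \in D$ with $\alpha \in s^p$ (well-defined by directedness). The subtle point is that $s^q$ needs a maximum element: if $\gamma^* := \sup_{p \in D} \gamma^p$ is not attained then $\cf(\gamma^*) \leq |D| < \kappa \leq \mu$, and I will adjoin $\gamma^*$ to $s^q$ with $\mathcal{C}^q_{\gamma^*} = \{C\}$ for a suitable club $C$ in $\gamma^*$ of order type $\cf(\gamma^*)$. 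By directedness of $D$ I can choose $C$ along a cofinal subchain of $D$ so that every $\alpha \in C'$ already lies in $s^{p^*}$ for some $p^* \in D$ with $C \cap \alpha \in \mathcal{C}^{p^*}_\alpha$, which yields the coherence condition (4) at $\gamma^*$.

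For (2), I will exhibit a winning strategy $\sigma$ for Player II in the game $G_{\mu+1}(\bb{B}(\kappa,\lambda,\mu))$. At each limit stage $\delta \leq \mu$ of a play $\langle p_\alpha \mid \alpha < \delta \rangle$, $\sigma$ instructs II to play the pointwise union of the $p_\alpha$'s and, if $\gamma^* := \sup_{\alpha < \delta}\gamma^{p_\alpha}$ is not attained, to adjoin $\gamma^*$ with a single club $C_\delta$ of order type at most $\delta \leq \mu$. The invariant II maintains, by always playing single-club extensions at its limit stages, is that for every earlier limit stage $\beta$, $\mathcal{C}^{p_\beta}_{\gamma^{p_\beta}} = \{C_\beta\}$ with $C_\delta \cap \gamma^{p_\beta} = C_\beta$; this enforces coherence at the new top point. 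At successor stages II simply plays any extension of I's last move compatible with the invariant, which is always possible.

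For (3), let $G$ be generic and set $\vec{\mathcal{C}} := \bigcup_{p \in G} p$ and $S := \dom(\vec{\mathcal{C}})$. Conditions (2)--(4) of a $\square^{\geq \kappa}_{\mu, <\lambda}$-sequence transfer directly from the defining clauses of $\bb{B}$ and the end-extending order. For condition (1), I show the density of $D_\alpha := \{p \mid \gamma^p > \alpha\}$ for every $\alpha < \mu^+$: given $p$, I extend $p$ by a transfinite recursion of length at most $\mu$, adjoining one ordinal at each successor stage and invoking the strategy from (2) at each limit stage. Since $\mu+1$-strategic closure preserves $\mu^+$ and all cofinalities $\leq \mu$, the set $S^{\mu^+}_{\geq \kappa}$ agrees in $V$ and $V[G]$, so genericity forces every such $\alpha$ into $S$. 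The main technical obstacle is the coherence bookkeeping in (2): II must commit to specific single clubs at every limit stage in a way consistent with all possible future play, and this is precisely where the order-type bound $\mathrm{otp}(C) \leq \mu$ and the game length $\mu+1$ cooperate to keep the strategy from getting stuck.
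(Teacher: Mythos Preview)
The paper itself does not prove this proposition; it cites another reference for the case $\lambda = 2$ and asserts the general case is the same. So your attempt is more detailed than the paper's, and parts (2) and (3) of your sketch are essentially the standard arguments and are fine.

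There is, however, a genuine gap in your argument for (1). You propose to adjoin $\gamma^* = \sup_{p \in D}\gamma^p$ to $s^q$ together with a single club $C$ and claim that ``by directedness of $D$'' you can choose $C$ so that every limit point $\alpha$ of $C$ already lies in some $s^{p^*}$ with $C \cap \alpha \in \mathcal{C}^{p^*}_\alpha$. This is asking for a \emph{thread} through the partial square sequence $\bigcup_{p \in D}\langle \mathcal{C}^p_\alpha \mid \alpha \in s^p\rangle$, and no such thread need exist. For a concrete obstruction, take $\kappa = \omega_2$, $\lambda = 2$, and a decreasing $\omega_1$-chain $\langle p_\xi \mid \xi < \omega_1\rangle$ in which $\bigcup_\xi s^{p_\xi}$ contains every countable limit ordinal below $\gamma^*$ and each $\mathcal{C}^{p_\xi}_\alpha$ is a singleton $\{C_\alpha\}$ with $\mathrm{otp}(C_\alpha) = \omega$ and $C_\alpha$ having no limit points. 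If $C$ were a club in $\gamma^*$ with two limit points $\alpha < \beta$, then $C \cap \alpha = C_\alpha$ has order type $\omega$, while $C \cap \alpha$ is also a proper initial segment of $C \cap \beta = C_\beta$, hence finite --- a contradiction. So threading is impossible here, yet the chain is perfectly legal.

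The correct (and much simpler) fix is to \emph{skip past} $\gamma^*$ rather than adjoin it. Since $|D| < \kappa$, one has $\cf(\gamma^*) < \kappa$, so $\gamma^*$ is not required to lie in $s^q$ by the clause $\gamma^q \cap \mathrm{cof}(\geq \kappa) \subseteq s^q$. Take $\gamma^q = \gamma^* + \omega$, set $s^q = \bigcup_{p \in D} s^p \cup \{\gamma^* + \omega\}$, and let $\mathcal{C}^q_{\gamma^* + \omega} = \{\{\gamma^* + n \mid 1 \leq n < \omega\}\}$. This club has no limit points, so the coherence condition at the new top is vacuous; the remaining clauses are inherited, and $q$ is a lower bound for $D$. (Note that directedness in this poset forces $D$ to be a chain, since conditions with the same top ordinal are equal, so $\kappa$-directed closure reduces to $\kappa$-closure.) This trick is not available for (2), since there the sequence may have length $\geq \kappa$ and $\gamma^*$ may well have cofinality $\geq \kappa$; that is precisely why one needs Player~II's bookkeeping strategy in that case, as you correctly describe.
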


\begin{proposition}
	Suppose $\kappa < \mu$ are infinite cardinals. Then the following are equivalent.
	\begin{enumerate}
		\item{$\square^{\geq \kappa^+}_{\mu, \mu}$ holds.}
		\item{There is a poset $\bb{P}$ such that $|\bb{P}| \leq \kappa$ and $\Vdash_{\bb{P}}``\square^*_\mu$ holds."}
	\end{enumerate}
\end{proposition}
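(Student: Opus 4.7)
The plan is to prove each direction separately, using throughout that any forcing $\bb{P}$ with $|\bb{P}| \leq \kappa$ has the $\kappa^+$-c.c., so cofinalities $\geq \kappa^+$ are preserved between $V$ and $V^{\bb{P}}$, while any $\alpha$ with $\cf^V(\alpha) \leq \kappa$ still has $\cf^{V^{\bb{P}}}(\alpha) \leq \kappa$.

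For (2) $\Rightarrow$ (1), I fix $\bb{P}$ of size $\leq \kappa$ and a $\bb{P}$-name $\dot{\vec{\mathcal{D}}}$ for a $\square^*_\mu$-sequence, enumerating each $\dot{\mathcal{D}}_\alpha$ in $V$ as $\{\dot{C}^\alpha_\xi : \xi < \mu\}$. For each $\alpha \in S^{\mu^+}_{\geq \kappa^+}$, since $\cf(\alpha) > \kappa \geq |\bb{P}|$, a Baumgartner-style guessing argument (considering the name $\dot{f}(\eta) = \min(\dot{C}^\alpha_\xi \setminus (\eta+1))$ and taking its $V$-closure points) yields, for each $p \in \bb{P}$ and each $\xi < \mu$, a $V$-club $D^\alpha_{p,\xi} \subseteq \alpha$ with $p \Vdash D^\alpha_{p,\xi} \subseteq \dot{C}^\alpha_\xi$. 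Setting $\mathcal{C}^*_\alpha = \{D^\alpha_{p,\xi} \mid p \in \bb{P}, \xi < \mu\}$ gives a family of size $\leq \mu$. Coherence between levels follows from the coherence of $\dot{\vec{\mathcal{D}}}$ in $V^{\bb{P}}$, after enriching each $\mathcal{C}^*_\alpha$ to close under restrictions $D^\beta_{p,\xi} \cap \alpha$ coming from higher $\beta$'s; the enrichment remains within size $\mu$ since for each $\alpha$ the allowable $\xi'$ indices at higher levels are controlled by the $\kappa^+$-c.c.

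For (1) $\Rightarrow$ (2), I take $\bb{P} = \mathrm{Coll}(\omega, \kappa)$, which has size $\kappa$. In $V^{\bb{P}}$, every $\alpha$ with $\cf^V(\alpha) \leq \kappa$ has $\cf^{V^{\bb{P}}}(\alpha) = \omega$, while $\cf^V(\alpha) \geq \kappa^+$ is preserved. I extend $\vec{\mathcal{C}}$ to a $\square^*_\mu$-sequence $\vec{\mathcal{D}}$: set $\mathcal{D}_\alpha = \mathcal{C}_\alpha$ for $\alpha \in S = S^{\mu^+}_{\geq \kappa^+}$, and for $\alpha$ with new cofinality $\omega$, take $\mathcal{D}_\alpha$ to consist of all $C \cap \alpha$ that coherence with the $S$-levels demands (i.e., for every $\beta \in S$ and $C \in \mathcal{C}_\beta$ with $\alpha \in C'$), together with a canonical cofinal $\omega$-sequence $c_\alpha$ derived from the generic. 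Coherence of $\vec{\mathcal{D}}$ is verified by case analysis on $\mathcal{D}_\beta$: if $\beta \in S$ the constraint is met by construction, while if $\beta \notin S$ then every $D \in \mathcal{D}_\beta$ is either of the form $C \cap \beta$ for $C \in \mathcal{C}_{\beta^*}$ (so $D \cap \alpha = C \cap \alpha$ also lies in $\mathcal{D}_\alpha$) or the sequence $c_\beta$ (which has order type $\omega$ and hence no limit points).

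The hard part is the size bound $|\mathcal{D}_\alpha| \leq \mu$ for $\alpha$ of cofinality $\omega$ in $V^{\bb{P}}$: there are $\mu^+$-many pairs $(\beta, C)$ with $\beta \in S$, $C \in \mathcal{C}_\beta$, $\alpha \in C'$, each contributing a restriction $C \cap \alpha$, so a priori the naive union can have size up to $\mu^+$. To establish that only $\leq \mu$ distinct values arise, I exploit the coherence of $\vec{\mathcal{C}}$: given any $(\beta, C)$, letting $\beta^*_{(\beta,C)} = \min(C' \cap S \setminus \alpha)$ (when it lies strictly below $\beta$), coherence gives $C \cap \beta^*_{(\beta,C)} \in \mathcal{C}_{\beta^*_{(\beta,C)}}$ with the same restriction to $\alpha$, so each distinct $C \cap \alpha$ canonically arises from an element of $\mathcal{C}_{\beta^*}$ for a ``minimal'' $\beta^* \in S$ above $\alpha$. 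Verifying that the resulting count is genuinely $\leq \mu$ is the technically delicate step, and it is the key combinatorial obstacle whose resolution requires the full strength of the coherence structure of $\square^{\geq \kappa^+}_{\mu, \mu}$-sequences.
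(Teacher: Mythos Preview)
Your $(1) \Rightarrow (2)$ direction rests on a misreading of the definition that manufactures a phantom obstacle. Condition (4) in the definition of a $\square^{\geq \kappa^+}_{\mu, \mu}$-sequence says that for all $\beta \in S$, all $C \in \mathcal{C}_\beta$, and all $\alpha \in C'$, we have $\alpha \in S$ (and $C \cap \alpha \in \mathcal{C}_\alpha$). In particular, the domain $S$ is \emph{closed under taking limit points of clubs in the sequence}; the definition only requires $S^{\mu^+}_{\geq \kappa^+} \subseteq S$, not equality. Consequently, if $\alpha \notin S$, there is \emph{no} pair $(\beta, C)$ with $\beta \in S$, $C \in \mathcal{C}_\beta$, and $\alpha \in C'$. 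The ``hard part'' you identify---bounding the number of restrictions $C \cap \alpha$ landing at such $\alpha$ by $\mu$---is therefore vacuous: there are zero such restrictions. The paper's construction simply sets $\mathcal{D}_\alpha = \{D\}$ for a single arbitrary cofinal $\omega$-sequence $D$ when $\alpha \in \lim(\mu^+) \setminus S$, and coherence is immediate since $D' = \emptyset$ and no $C$ from a higher level ever accumulates at $\alpha$.

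For $(2) \Rightarrow (1)$, your idea is in the right spirit but is more convoluted than needed, and the ``enrichment'' step is vague (and again implicitly assumes $S = S^{\mu^+}_{\geq \kappa^+}$, which will not give condition (4)). The paper's approach is cleaner: for a name $\dot{C}$ and $p \in \bb{P}$, let $\dot{C}_p = \{\eta \mid p \Vdash \eta \in \dot{C}\}$; put $\alpha$ into $S$ exactly when some $p$ forces some $\dot{C} \in \dot{\mathcal{C}}_\alpha$ with $\dot{C}_p$ club in $\alpha$, and let $\mathcal{D}_\alpha$ be the set of all such $\dot{C}_p$. Then $S^{\mu^+}_{\geq \kappa^+} \subseteq S$ follows from $|\bb{P}| \leq \kappa$, and coherence---including the requirement that limit points lie in $S$---is automatic: if $\alpha \in (\dot{C}_p)'$ then $p \Vdash \dot{C} \cap \alpha \in \dot{\mathcal{C}}_\alpha$ and $(\dot{C} \cap \alpha)_p = \dot{C}_p \cap \alpha$ is club in $\alpha$, so $\alpha \in S$ and $\dot{C}_p \cap \alpha \in \mathcal{D}_\alpha$ without any enrichment.
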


\begin{proof}
	(1) $\Rightarrow$ (2): Suppose $\vec{\mathcal{C}} = \langle \mathcal{C}_\alpha \mid \alpha \in S \rangle$ is a $\square^{\geq \kappa^+}_{\mu, \mu}$-sequence. Let $\bb{P} = \mathrm{Coll}(\omega, \kappa)$. Then $|\bb{P}| = \kappa$ and, in $V^{\bb{P}}$, $\kappa^+ = \omega_1$. Define a $\square^*_\mu$-sequence $\mathcal{D} = \langle \mathcal{D}_\alpha \mid \alpha < \mu^+ \rangle$ in $V^{\bb{P}}$ as follows. If $\alpha \in S$, then let $\mathcal{D}_\alpha = \mathcal{C}_\alpha$. If $\alpha \in \lim(\mu^+) \setminus S$, then $(\cf(\alpha))^{V^{\bb{P}}} = \omega$. Let $D$ be an arbitrary $\omega$-sequence cofinal in $\alpha$, and let $\mathcal{D}_\alpha = \{D\}$. It is easily verified that this defines a $\square^*_\mu$-sequence.

	(2) $\Rightarrow$ (1): Suppose $\bb{P}$ is a forcing poset such that $|\bb{P}| \leq \kappa$ and $\Vdash_{\bb{P}} ``\square^*_\mu$ holds." Let $\dot{\vec{\mathcal{C}}} = \langle \dot{\mathcal{C}}_\alpha \mid \alpha < \mu^+ \rangle$ be a $\bb{P}$-name for a $\square^*_\mu$-sequence. For each $\bb{P}$-name $\dot{X}$ for a subset of $\mu^+$ and each $p \in \bb{P}$, let $\dot{X}_p = \{\alpha < \mu^+ \mid p \Vdash ``\alpha \in \dot{X}\}$. For each $\beta < \mu^+$, let $A_\beta = \{p \in \bb{P} \mid$ for some $\bb{P}$-name $\dot{C}$, $p \Vdash ``\dot{C} \in \dot{\mathcal{C}}_\beta"$ and $\dot{C}_p$ is club in $\beta \}$. Let $S = \{\beta < \mu^+ \mid A_\beta \not= \emptyset \}$. Easily, as $|\bb{P}| = \kappa$, $S^{\mu^+}_{\geq \kappa^+} \subseteq S$. Define a $\square^{\geq \kappa^+}_{\mu, \mu}$ sequence $\vec{\mathcal{D}} = \langle \mathcal{D}_\alpha \mid \alpha \in S \rangle$ by letting $\mathcal{D}_\alpha = \{\dot{C}_p \mid p \in A_\alpha, p \Vdash ``\dot{C} \in \dot{\mathcal{C}}_\alpha"$, and $\dot{C}_p$ is club in $\alpha\}$. It is easily verified that this is a $\square^{\geq \kappa^+}_{\mu, \mu}$-sequence.
\end{proof}

Since $\square^*_\mu$ is equivalent to the existence of a special $\mu^+$-Aronszajn tree, if $\square^{\geq \kappa^+}_{\mu, \mu}$ holds, then there is a strong $\mu^+$-system with $\kappa$ relations and no cofinal branch. We also get the following characterization for robustness of having no special trees.

\begin{corollary}
	Let $\mu$ be an infinite cardinal. The following are equivalent.
	\begin{enumerate}
		\item{There are no special $\mu^+$-Aronszajn trees, and this property holds robustly.}
		\item{$\square^{\geq \kappa^+}_{\mu, \mu}$ fails for all $\kappa < \mu$.}
	\end{enumerate}
\end{corollary}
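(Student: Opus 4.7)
The plan is to deduce the corollary directly from the immediately preceding proposition together with the classical equivalence between $\square^*_\mu$ and the existence of a special $\mu^+$-Aronszajn tree. Both implications will be proved by contrapositive and each amounts to a single application of the proposition.

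For (1) $\Rightarrow$ (2), suppose $\square^{\geq \kappa^+}_{\mu, \mu}$ holds for some infinite $\kappa < \mu$. By the preceding proposition there is a poset $\bb{P}$ with $|\bb{P}| \leq \kappa$ such that $\Vdash_{\bb{P}} ``\square^*_\mu$ holds,'' and hence $\Vdash_{\bb{P}}$ ``there is a special $\mu^+$-Aronszajn tree.'' Since $|\bb{P}|^+ \leq \kappa^+ \leq \mu < \mu^+$, this $\bb{P}$ witnesses that the non-existence of special $\mu^+$-Aronszajn trees is not robust at $\mu^+$, so (1) fails.

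For (2) $\Rightarrow$ (1), assume (1) fails, and let $\bb{Q}$ be a forcing poset with $|\bb{Q}|^+ < \mu^+$ (possibly trivial, if a special $\mu^+$-Aronszajn tree already exists in $V$) such that $V^{\bb{Q}}$ contains a special $\mu^+$-Aronszajn tree. Then $|\bb{Q}| < \mu$, so $\mu$ and $\mu^+$ remain cardinals in $V^{\bb{Q}}$ and $\square^*_\mu$ holds there. Setting $\kappa = \max(|\bb{Q}|, \aleph_0)$, we obtain an infinite $\kappa < \mu$ with $|\bb{Q}| \leq \kappa$, so $\bb{Q}$ witnesses clause (2) of the preceding proposition for this $\kappa$; applying the proposition in $V$ yields a $\square^{\geq \kappa^+}_{\mu, \mu}$-sequence, contradicting (2).

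No genuine obstacle remains, since the corollary is essentially a rephrasing of the proposition. The only minor technical points to handle are the need to pad $|\bb{Q}|$ up to $\aleph_0$ to meet the infiniteness requirement on $\kappa$ in the proposition, which is harmless because any $\square^{\geq (\kappa')^+}_{\mu, \mu}$-sequence is a fortiori a $\square^{\geq \kappa^+}_{\mu, \mu}$-sequence whenever $\kappa \geq \kappa'$, and the tacit assumption that $\mu$ is uncountable, without which condition (2) is vacuous.
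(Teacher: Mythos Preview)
Your proof is correct and follows exactly the approach intended by the paper, which states the corollary without proof as an immediate consequence of the preceding proposition combined with the classical equivalence of $\square^*_\mu$ with the existence of a special $\mu^+$-Aronszajn tree. Your attention to the edge cases (padding $|\bb{Q}|$ up to $\aleph_0$ so that $\kappa$ is infinite, and the vacuity of (2) when $\mu=\omega$) is appropriate and more careful than the paper itself.
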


\section{The strong system property at $\aleph_{\omega^2 + 1}$} \label{downsection}

In this section, we show how to obtain the strong system property at $\aleph_{\omega^2 + 1}$. We first recall the diagonal Prikry forcing introduced in \cite{free}.

Let $\langle \kappa_n \mid n < \omega \rangle$ be an increasing sequence of supercompact cardinals. Let $\mu = \sup(\{\kappa_n \mid n < \omega\})$, and let $\lambda = \mu^+$. Assume that, for all $n < \omega$, the supercompactness of $\kappa_n$ is indestructible under $\kappa_n$-directed closed forcing and $2^{\kappa_n} = \kappa_n^+$. For $m < \omega$, let $\bb{S}_m$ be the full-support product $\prod_{m \leq n < \omega} \mathrm{Coll}(\kappa_n^{+2}, < \kappa_{n+1})$. In $V^{\bb{S}_m}$, $\kappa_m$ remains supercompact, so we can fix in $V^{\bb{S}_m}$ a normal ultrafilter $F_m$ on $\mathcal{P}_{\kappa_m}(\lambda)$. Let $U_m$ be the projection of $F_m$ on $\kappa_m$, i.e., if $X \subseteq \kappa_m$, then $X \in U_m$ iff $\{y \in \mathcal{P}_{\kappa_m}(\lambda) \mid y \cap \kappa_m \in X\} \in F_m$. $U_m$ is then a normal ultrafilter on $\kappa_m$ and, since $\bb{S}_m$ is $\kappa^{+2}_m$-closed, $U_m \in V$. By the homogeneity of $\bb{S}_m$, we have that $U_m$ is forced by the empty condition in $\bb{S}_m$ to be the projection of a normal ultrafilter on $\mathcal{P}_{\kappa_m}(\lambda)$ in $V^{\bb{S}_m}$.

For $m < \omega$, let $M_m$ denote the transitive collapse of $\mathrm{Ult}(V, U_m)$, and let $j_m:V \rightarrow M_m$ be the associated embedding. Let $\bb{T}_m$ denote $\mathrm{Coll}(\kappa_m^{+\omega + 2}, < j_m(\kappa_m))$ as defined in $M_m$. $M \models ``$there are $j_m(\kappa_m)$ maximal antichains of $\bb{T}_m"$, $|j_m(\kappa_m)| = \kappa_m^+$, and $\bb{T}_m$ is $\kappa_m^+$-closed, so we can build in $V$ a filter $G_m$ that is $\bb{T}_m$-generic over $M_m$.

We now define the diagonal Prikry forcing, which we denote $\bb{P}$, from \cite{free}. For convenience, we let $\kappa_{-1} = \omega$. Conditions of $\bb{P}$ are of the form $p = \langle \alpha^p_0, \ldots, \alpha^p_{n-1}, \langle A^p_k \mid n \leq k < \omega \rangle, g^p_0, \ldots , g^p_n, f^p_0, \ldots, f^p_{n-1}, \langle F^p_k \mid n \leq k < \omega \rangle, \langle g^p_k \mid n < k < \omega \rangle \rangle$ such that:
\begin{itemize}
	\item{for all $i < n$, $\alpha^p_i$ is inaccessible and $\kappa_{i-1} < \alpha^p_i < \kappa_i$;}
	\item{for all $n \leq k < \omega$, $A^p_k \in U_k$ and, for all $\alpha \in A^p_k$, $\alpha$ is inaccessible;}
	\item{for all $i < n$, $g^p_i \in \mathrm{Coll}(\kappa_{i-1}^{+2}, < \alpha^p_i)$ and $f^p_i \in \mathrm{Coll}((\alpha_i^p)^{+\omega + 2}, < \kappa_i)$;}
	\item{for all $n \leq k < \omega$, $g^p_k \in \mathrm{Coll}(\kappa_{k-1}^{+2}, < \kappa_{k})$ and, for all $\alpha \in A^p_k$, $g^p_k \in \mathrm{Coll}(\kappa_{k+1}^{+2}, < \alpha)$;}
	\item{for all $n \leq k < \omega$, $F^p_k$ is a function with domain $A^p_k$ such that, for all $\alpha \in A^p_k$, $F^p_k(\alpha) \in \mathrm{Coll}(\alpha^{+\omega+2}, < \kappa_k)$ and $j_k(F^p_k)(\kappa_k) \in G_k$.}
\end{itemize}
$n$ is the length of $p$, denoted $\ell(p)$. If $q,p \in \bb{P}$, then $q \leq p$ iff:
\begin{itemize}
	\item{$\ell(q) \geq \ell(p)$;}
	\item{for all $i < \ell(p)$, $\alpha^q_i = \alpha^p_i$ and $f^q_i \leq f^p_i$;}
	\item{for all $i < \omega$, $g^q_i \leq g^p_i$;}
	\item{for all $\ell(q) \leq k < \omega$, $A^q_k \subseteq A^p_k$ and, for all $\alpha \in A^q_k$, $F^q_k(\alpha) \leq F^p_k(\alpha)$;}
	\item{for all $\ell(p) \leq k < \ell(q)$, $\alpha^q_k \in A^p_k$ and $f^q_k \leq F^p_k(\alpha^q_k)$.}
\end{itemize}

Following \cite{free}, given $p \in \bb{P}$ as above, we call $\langle \alpha^p_k \mid k < \ell(p) \rangle$ its $\alpha$-part, $\langle A^p_k \mid \ell(p) \leq k < \omega \rangle$ its $A$-part, $\langle f^p_k \mid k < \ell(p) \rangle$ its $f$-part, $\langle g^p_k \mid k \leq \ell(p) \rangle$ its $g$-part, $\langle F^p_k \mid \ell(p) \leq k < \omega \rangle$ its $F$-part, and $\langle g^p_k \mid \ell(p) < k < \omega \rangle$ its $S$-part. The $\alpha$-part, $g$-part, and $f$-part together comprise the lower part of $p$, denoted $a(p)$. If $k \leq \ell(p)$, let $p \restriction k$ denote $\langle \langle \alpha^p_i \mid i < k \rangle, \langle g^p_i \mid i \leq k \rangle, \langle f^p_i \mid i < k \rangle \rangle$. If $k > \ell(p)$, let $p \restriction k = a(p) ^\frown \langle A^p_i, F^p_i, g^p_{i+1} \mid \ell(p) \leq i < k \rangle$. Note that $p \restriction \ell(p) = a(p)$. We say that $q$ is a length-preserving extension of $p$ if $q \leq p$ and $\ell(q) = \ell(p)$. If $k \leq \ell(p)$, we say $q$ is a $k$-length-preserving extension of $p$ if $q$ is a length-preserving extension of $p$ and $q \restriction k = p \restriction k$. Finally, we say $q$ is a trivial extension of $p$ if it is an $\ell(p)$-length preserving extension of $p$.

The following facts hold about $\bb{P}$. Proofs can be found in \cite{free}.

\begin{itemize}
	\item{(\emph{Prikry property}) If $p \in \bb{P}$, $k \leq \ell(p)$, and $D$ is a dense open subset of $\bb{P}$, then there is a $k$-length preserving extension $q$ of $p$ such that, if $q^* \leq q$ and $q^* \in D$, then, if $q^{**} \leq q$, $\ell(q^{**}) = \ell(q^*)$, and $q^{**} \restriction k = q^* \restriction k$, then $q^{**} \in D$.}
	\item{$\bb{P}$ preserves all cardinals $\geq \mu$.}
	\item{The only cardinals below $\mu$ that are collapsed by forcing with $\bb{P}$ are those explicitly in the scope of the interleaved Levy collapses. In particular, if, in $V^{\bb{P}}$, $\langle \alpha_n \mid n < \omega \rangle$ is the generic Prikry sequence, then the cardinals below $\mu$ in $V^{\bb{P}}$ are precisely those in the intervals $\{[\kappa_{n-1}, \kappa_{n-1}^{+2}], [\alpha_n, \alpha_n^{+\omega+2}] \mid n < \omega \}$. It follows that, in $V^{\bb{P}}$, $\mu = \aleph_{\omega^2}$ and $\lambda = \aleph_{\omega^2 + 1}$.}
\end{itemize}

We now introduce an equivalence relation on $\bb{S}_0 = \prod_{n<\omega} \mathrm{Coll}(\kappa_n^{+2}, < \kappa_{n+1})$. Given $s = \langle s_n \mid n < \omega \rangle$ and $t = \langle t_n \mid n < \omega \rangle$ in $\bb{S}_0$, let $s \sim t$ if there is $m < \omega$ such that, for all $m \leq n < \omega$, $s_n = t_n$. Given $s \in \bb{S}_0$, let $[s]$ denote the equivalence class of $s$. If $m > 0$ and $s \in \bb{S}_m$, we abuse notation and let $[s]$ denote $[s^*]$, where $s^* \in \bb{S}_0$ is such that $s^*(n) = \emptyset$ if $n < m$ and $s^*(n) = s(n)$ if $n \geq m$. Let $\bb{S}^*$ be the poset whose conditions are equivalence classes $[s]$, where $s \in \bb{S}_0$. If $[s]$, $[t] \in \bb{S}^*$, we let $[t] \leq [s]$ iff there is $m < \omega$ such that, for all $m \leq n < \omega$, $t_n \leq s_n$. It is clear that this is well-defined.

If $p \in \bb{P}$, let the $S$-part of $p$ be denoted by $S(p)$. It is easily verified that, if $m < \omega$, the map $\pi_m:\bb{S}_m \rightarrow \bb{S}^*$ defined by $\pi_m(s) = [s]$ is a projection. Also, the map $\pi: \bb{P} \rightarrow \bb{S}^*$ defined by $\pi(p) = [S(p)]$ is a projection.

Let $G^*$ be $\bb{S}^*$-generic over $V$, and let $\bb{P}^* = \{p \in \bb{P} \mid [S(p)] \in G^*\}$. The following lemma was proved by Fontanella and Magidor in \cite{fontanella}. The proof there is for $m = 0$, but the general case is the same.

\begin{lemma} \label{cclemma}
	In $V[G^*]$, $\bb{P}^* \times \bb{S}_m / G^* \times \bb{S}_m/G^*$ has the $\lambda$-c.c.
\end{lemma}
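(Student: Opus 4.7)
Plan: The strategy is to show that the two-step iteration $\bb{S}^* \ast (\bb{P}^* \times (\bb{S}_m/\dot{G}^*)^2)$ has the $\lambda$-c.c.\ in $V$, from which the lemma follows by a standard quotient argument: any $\lambda$-sized antichain in $V[G^*]$ lifts, via names, to a $\lambda$-sized antichain in the iteration in $V$. A condition in the iteration is naturally represented by a quadruple $([u], p, s^0, s^1) \in \bb{S}^* \times \bb{P} \times \bb{S}_m \times \bb{S}_m$ where $[u]$ is a common refinement in $\bb{S}^*$ of the three projections $\pi(p)$, $\pi_m(s^0)$, and $\pi_m(s^1)$, which exists because $\bb{S}^*$ is the target of projections from all three of $\bb{P}$, $\bb{S}_m$, $\bb{S}_m$.

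Given $\lambda$-many such quadruples $([u_\alpha], p_\alpha, s^0_\alpha, s^1_\alpha)$, I first pigeonhole on $\ell(p_\alpha) \in \omega$ to fix a common length $n$. Next, the lower parts $a(p_\alpha)$ all lie in $V_\mu$, which has cardinality $\mu < \lambda$ (by GCH at each $\kappa_n$ and the strong-limitness of $\mu$), so a further pigeonhole yields a common lower part $a$. For any two of the resulting conditions, the $A$-parts of $p_\alpha$ combine by intersection (remaining in each $U_k$ by $\mu_0$-completeness) and the $F$-parts by coordinate-wise meet, which preserves the constraint $j_k(F^p_k)(\kappa_k) \in G_k$ since $G_k$ is a filter; so these parts present no obstacle to compatibility.

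The heart of the argument is then to arrange coordinate-wise compatibility of the three $\bb{S}_0$-type sequences: the $S$-part of $p_\alpha$ and the two conditions $s^0_\alpha, s^1_\alpha$. This is where $[u_\alpha]$ is essential: working below a common refinement of the $[u_\alpha]$'s, the three $\bb{S}^*$-projections are forced to coincide on a cofinite tail, so any two representatives can differ only on finitely many coordinates, each a Levy collapse condition of size $<\mu$. A further pigeonhole on those finitely many exceptional coordinates (of which there are fewer than $\mu$-many possibilities by GCH) produces $\lambda$-many quadruples that agree pointwise on the bounded portion of the support; these therefore admit a common lower bound in the iteration. The main obstacle is taming the full-support product structure of $\bb{S}_m$: the point of carrying $[u]$ as part of the condition is precisely to localize all potential incompatibilities to a bounded initial segment of the support, where a GCH-based pigeonhole concludes.
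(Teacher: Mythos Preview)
Your strategy has a fundamental flaw: the two-step iteration $\bb{S}^* \ast (\bb{P}^* \times (\bb{S}_m/\dot{G}^*)^2)$ is \emph{not} $\lambda$-c.c.\ in $V$, because $\bb{S}^*$ itself already fails to be $\lambda$-c.c. To see this, pick for each $n$ an antichain $A_n \subseteq \mathrm{Coll}(\kappa_n^{+2}, <\kappa_{n+1})$ of size $\kappa_n$. By K\"onig's theorem, $|\prod_n A_n| > \mu$, so $|\prod_n A_n| \geq \lambda$; and each $\sim$-equivalence class in $\prod_n A_n$ has size at most $\sum_N \prod_{n<N}\kappa_n \leq \mu$. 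Choosing one representative from each of $\lambda$-many classes yields $\lambda$-many sequences which pairwise disagree (hence are incompatible) on infinitely many coordinates, i.e., an antichain of size $\lambda$ in $\bb{S}^*$. So there is no hope of showing the iteration is $\lambda$-c.c.

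The place your argument breaks is exactly where you write ``these therefore admit a common lower bound in the iteration.'' All of your pigeonholing fixes the lower part of $p_\alpha$ and the bounded portions of $S(p_\alpha), s^0_\alpha, s^1_\alpha$, but it does nothing to make the \emph{tails} compatible: the conditions $[u_\alpha]$ and $[u_\beta]$ live in $\bb{S}^*$ and there is no reason for them to be compatible there. Carrying $[u_\alpha]$ as part of the condition does not ``localize all potential incompatibilities to a bounded initial segment''; it merely records that the three projections for index $\alpha$ have a common refinement, and says nothing about how they interact with the projections for index $\beta$.

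The correct argument (as in Fontanella--Magidor) works directly in $V[G^*]$ rather than lifting to $V$. The crucial point is that in $V[G^*]$ one has $[S(p_\alpha)], [s^0_\alpha], [s^1_\alpha] \in G^*$ for every $\alpha$, and since $G^*$ is a filter these are all pairwise compatible in $\bb{S}^*$. This is precisely the missing compatibility on the tails that your approach cannot supply. Once that is in hand, the rest of your outline (pigeonhole on length and lower parts, combine $A$- and $F$-parts, thin to fix finitely many exceptional coordinates) is indeed the shape of the argument.
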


\begin{theorem} \label{strongsystemthm}
	In $V^{\bb{P}}$, the strong system property holds at $\aleph_{\omega^2 + 1}$.
\end{theorem}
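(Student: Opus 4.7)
My plan is a standard approximation-plus-supercompact-embedding argument. Let $S = \langle \lambda \times \kappa, \mathcal{R}\rangle$ be a strong $\lambda$-system in $V^{\bb{P}}$ with $|\mathcal{R}|^+ < \lambda$; since $\lambda = \aleph_{\omega^2+1}$, pick $m < \omega$ with $\kappa, |\mathcal{R}| < \kappa_m$. The first step is an approximation reduction. By Lemma \ref{cclemma}, $\bb{P}^* \times \bb{S}_m/G^* \times \bb{S}_m/G^*$ has the $\lambda$-c.c.\ in $V[G^*]$, so $\bb{S}_m/G^* \times \bb{S}_m/G^*$ has the $\lambda$-c.c.\ in $V^{\bb{P}} = V[G^*][H]$. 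By Lemma \ref{approxlemma}, $\bb{S}_m/G^*$ then has the $\lambda$-approximation property over $V^{\bb{P}}$, and Lemma \ref{approxbranchprop} reduces the problem to producing a cofinal branch of $S$ in $V^{\bb{P}}[K]$, where $K$ is $\bb{S}_m/G^*$-generic over $V^{\bb{P}}$.

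Next I identify $V^{\bb{P}}[K]$ with $V[L][H]$, where $L$ is obtained from $G^*$ and $K$ and is $\bb{S}_m$-generic over $V$, and where $H$ is $\bb{P}^*$-generic over $V[L] = V^{\bb{S}_m}$; this uses mutual genericity of $H$ and $K$ over $V[G^*]$. In $V[L]$, $\kappa_m$ remains supercompact by indestructibility, so I fix $j : V[L] \to M$ witnessing $\lambda$-supercompactness of $\kappa_m$, with $M^{\lambda} \subseteq M$ in $V[L]$. The key step is to lift $j$ to $\bar j : V[L][H] \to M[H^*]$ for some $j(\bb{P}^*)$-generic $H^*$ over $M$ with $j``H \subseteq H^*$. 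Granting this lift, $\bar j(S)$ is a strong $j(\lambda)$-system of the form $\langle j(\lambda) \times \kappa, \{\bar j(R) \mid R \in \mathcal{R}\}\rangle$ in $M[H^*]$ (using $\kappa, |\mathcal{R}| < \mathrm{crit}(j)$). Letting $\delta := \sup \bar j``\lambda < j(\lambda)$ and fixing any $\gamma^* < \kappa$, strongness of $\bar j(S)$ supplies for each $\alpha < \lambda$ a pair $(\beta_\alpha, R_\alpha) \in \kappa \times \mathcal{R}$ with $(\bar j(\alpha), \beta_\alpha) <_{\bar j(R_\alpha)} (\delta, \gamma^*)$. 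Since $\kappa \cdot |\mathcal{R}| < \kappa_m < \lambda$, a pigeonhole argument yields an unbounded $I \subseteq \lambda$, $\beta^* < \kappa$ and $R^* \in \mathcal{R}$ with $(\beta_\alpha, R_\alpha) = (\beta^*, R^*)$ for every $\alpha \in I$. Tree-likeness of $\bar j(R^*)$ and elementarity then force $(\alpha_0, \beta^*) <_{R^*} (\alpha_1, \beta^*)$ in $V[L][H]$ whenever $\alpha_0 < \alpha_1$ are in $I$, so $b := \{(\alpha, \beta^*) \mid \alpha \in I\}$ is a cofinal branch of $S$, completing the argument after invoking Lemma \ref{approxbranchprop}.

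The main obstacle is the lift of $j$ through $\bb{P}^*$. What one needs is a factorization $j(\bb{P}^*) \cong \bb{P}^* * \dot{\bb{Q}}$ in $M$, together with enough closure and antichain-counting to build, in a mild extension of $V[L][H]$, an $M[H]$-generic for $\dot{\bb{Q}}$ that together with $H$ yields the desired $H^*$; this is where the specific Prikry-plus-collapse structure of $\bb{P}$ from \cite{free} and the projection analysis of \cite{fontanella} are essential. If $H^*$ only exists in a further generic extension $V[L][H][X]$, the branch $b$ initially lives there as well, but one handles this by observing that the combined forcing $\bb{S}_m/G^* * \dot X$ still has the $\lambda$-approximation property over $V^{\bb{P}}$ (again via Lemma \ref{cclemma}-style c.c.\ bounds for the full product), or, alternatively, by a direct application of Lemma \ref{preservationtheorem} in which the upper part of $j(\bb{P}^*)$ plays the role of the closed side and $\bb{P}^*$ the role of the c.c.\ side. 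Either way, the upshot is that $b$ may be pulled back to $V^{\bb{P}}$, establishing the strong system property at $\lambda = \aleph_{\omega^2+1}$.
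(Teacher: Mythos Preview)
Your reduction via the $\lambda$-approximation property and the identification $V^{\bb{P}}[K] = V[L][H]$ are correct and match the paper's framework, but the step you yourself flag as ``the main obstacle''---lifting $j:V[L]\to M$ through $\bb{P}^*$---is a genuine gap, not a technicality. There is no factorization $j(\bb{P}^*)\cong\bb{P}^**\dot{\bb{Q}}$ with a well-behaved quotient: for any $q\in H$ with $\ell(q)>m$, the Prikry coordinates $\alpha^q_i$ for $i>m$ lie above $\kappa_m$ and are moved by $j$, while the natural ``diagonal'' condition $r\in j(\bb{P}')$ with $\alpha^r_m=\kappa_m$ (which is what makes $\lambda$ survive in $M[H^*]$) is incompatible with $j(q)$ since $j(\alpha^q_m)=\alpha^q_m<\kappa_m$. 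So $j``H$ cannot sit below any such $r$, and the standard master-condition or transfer arguments do not apply. The paper does \emph{not} lift $j$. Instead, working in $V[G]$, it takes an arbitrary $j(\bb{P}')$-generic $H$ over $M$ containing such an $r$---with no relation to any $\bb{P}'$-generic---and uses the interpretation of $j(\dot S)$ in $M[H]$ together with elementarity to extract a \emph{narrow} $\lambda$-system $S_0$ in $V[G]$, indexed by lower parts of some fixed length (Lemma~\ref{lengthlemma}). Lemma~\ref{collapselemma} gives a cofinal branch through $S_0$, and then an $\omega$-length recursion builds further narrow systems $S_\ell$ (one for each coordinate above the stem), each time applying Lemma~\ref{collapselemma} again, to produce conditions $q_\beta\in\bb{P}'$ whose pairwise meets force the desired relation. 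Only at the end does one choose a $\bb{P}'$-generic and invoke the approximation argument you described.

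There is also a secondary issue: you pick $m$ with $\kappa<\kappa_m$, but the strong system property must handle systems of width $\mu$, and indeed the paper treats $\mathrm{width}(\dot S)=\mu$ as the main case. In that situation $\bar j(S)$ (were the lift to exist) would have levels of size $j(\mu)$, so your direct pigeonhole on $\beta_\alpha$ fails; one must first trap $\beta_\alpha$ below some $j(\kappa_n)$, which is precisely what Lemma~\ref{lengthlemma} does---but via the unlifted embedding and the narrow-system machinery rather than a direct argument.
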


\begin{proof}
	We modify the proof of Theorem 1.2 in \cite{fontanella}. Recall that, in $V^{\bb{P}}$, $\mu = \aleph_{\omega^2}$ and $\lambda = \aleph_{\omega^2 + 1}$. Suppose the theorem fails. Then there is $p \in \bb{P}$ and a $\bb{P}$-name $\dot{S}$ forced by $p$ to be a strong $\lambda$-system with fewer than $\mu$ relations and no cofinal branch. We may assume $p \Vdash ``\mathrm{width}(\dot{S}) = \mu,"$ as other cases are easier. By strengthening $p$ if necessary, we may also assume the following.
	\begin{itemize}
		\item{There is $\nu < \mu$ such that $p$ forces $\dot{S}$ to be of the form $\langle \lambda \times \mu, \{\dot{R}_\eta \mid \eta < \nu\} \rangle$.}
		\item{$\nu < \kappa_{\ell(p)}$.}
	\end{itemize}
	 Let $m = \ell(p)$, and let $G$ be $\bb{S}_m$-generic over $V$ with $S(p) \in G$. Let $G^*$ be the $\bb{S}^*$-generic filter induced by $G$. Let $\bb{P}' = \{q \leq p \mid S(q) \in G\}$. We are abusing notation here in the sense that, if $q \in \bb{P}$ and $\ell(q) > m$, then $S(q)$ is not in $\bb{S}_m$. However, $G$ naturally projects to a generic $\bb{S}_{\ell(q)}$-generic filter $G_{\ell(q)}$, so $S(q) \in G$ should be interpreted as $S(q) \in G_{\ell(q)}$. The following is proven in \cite{free}.

	\begin{lemma}
		Forcing with $\bb{P}'$ over $V[G]$ adds a $\bb{P}$-generic filter over $V$.
	\end{lemma}

	Note also that, if $q_0, q_1 \in \bb{P}'$ and $a(q_0) = a(q_1)$, then $q_0$ and $q_1$ are compatible. In particular, $\bb{P}'$ has the $\lambda$-c.c. We now work in $V[G]$ and use the name $\dot{S}$, reinterpreted as a $\bb{P}'$-name, to extract a narrow system.

	\begin{lemma} \label{lengthlemma}
		There are $n,k < \omega$, $\eta < \nu$, and a cofinal set $I \subseteq \lambda$ such that, for all $\beta_0 < \beta_1$, both in $I$, there are $\gamma_0, \gamma_1 < \kappa_n$ and a condition $q \in \bb{P}'$ with $\ell(q) = k$ such that $q \Vdash ``(\beta_0, \gamma_0) <_{\dot{R}_\eta} (\beta_1, \gamma_1)"$.
	\end{lemma}

	\begin{proof}
		Recall that, in $V[G]$, $F_m$ is a normal measure on $\mathcal{P}_{\kappa_m}(\lambda)$ that projects to $U_m$. Let $M \cong \mathrm{Ult}(V, F_m)$ be the transitive collapse of the ultrapower, and let $j:V[G] \rightarrow M$ be the associated embedding. Find $r \leq j(p)$ in $j(\bb{P}')$ such that $\alpha^r_m = \kappa_m$. This is possible because, since $A^p_m \in U_m$, we have $\kappa_m \in j(A^p_m)$. Let $H$ be $j(\bb{P}')$-generic with $r \in H$. Note that, in $M[H]$, all cardinals in the interval $[\kappa_m, \kappa_m^{+\omega+2}]^{V[G]}$ are preserved. In particular, since $\lambda = (\kappa_m^{+\omega+1})^{V[G]}$, $\lambda$ is preserved in $M[H]$. Let $S^*$ be the realization of $j(\dot{S})$ in $M[H]$. Since $j(p) \in H$, $S^* = \langle j(\lambda) \times j(\mu), \{R^*_\eta \mid \eta < \nu\} \rangle$, where $R^*_\eta$ denotes the realization of $j(\dot{R}_\eta)$ for all $\eta < \nu$, is a strong $j(\lambda)$-system.

		Let $\delta = \sup(j``\lambda)$. For each $\beta < \lambda$, find $q_\beta \in H$, $\gamma^*_\beta < j(\mu)$, and $\eta_\beta < \nu$ such that $q_\beta \Vdash ``(j(\beta), \gamma^*_\beta) <_{j(\dot{R}_{\eta_\beta})} (\delta, 0)."$ There is then an unbounded $I^* \subseteq \lambda$, $n,k < \omega$, and $\eta < \nu$ such that, for all $\beta \in I^*$, we have $\ell(q_\beta) = k$, $\gamma^*_\beta < j(\kappa_n)$, and $\eta_\beta = \eta$.

		For $\beta \in I^*$, let $q_\beta = \langle \alpha^\beta_0, \ldots, \alpha^\beta_{k-1}, \langle A^\beta_i \mid k \leq i < \omega \rangle, g^\beta_0, \ldots, g^\beta_k, f^\beta_0, \ldots, f^\beta_{k-1}, \langle F^\beta_i \mid k \leq i < \omega \rangle, \langle g^\beta_i \mid k < i < \omega \rangle \rangle$. Since the $q_\beta$'s are pairwise compatible, there is a sequence $\langle \alpha_i \mid i < k \rangle$ such that, for all $\beta \in I^*$ and all $i < k$, $\alpha^\beta_i = \alpha_i$. Moreover, we know that $k > m$, $\alpha_i = \alpha^p_i$ for all $i < m$, and $\alpha_m = \kappa_m$. There are thus fewer than $\lambda$-many choices for the sequence $\langle g^\beta_0, \ldots, g^\beta_m, f^\beta_0, \ldots, f^\beta_{m-1} \rangle$, so we can find a cofinal $I \subseteq I^*$ and a sequence $\langle g_0, \ldots, g_m, f_0, \ldots, f_{m-1} \rangle$ such that, for all $\beta \in I$, $\langle g^\beta_0, \ldots, g^\beta_m, f^\beta_0, \ldots, f^\beta_{m-1} \rangle = \langle g_0, \ldots, g_m, f_0, \ldots, f_{m-1} \rangle$. Finally, if $m < i < k$ and $\beta \in I$, then $g_i$ comes from a forcing that is $\lambda^+$-directed closed. Similarly, if $m \leq i < k$ and $\beta \in I$, then $f_i$ comes from a forcing that is $\lambda^+$-directed closed. Thus, since $M$ is closed under $\lambda$-sequences, we may assume, by taking lower bounds on the relevant coordinates, that there is a lower part $a^* = \langle \alpha_0, \ldots, \alpha_{k-1}, g_0, \ldots, g_k, f_0, \ldots, f_{k-1} \rangle$ such that, for all $\beta \in I$, $a(q_\beta) = a^*$.

		We claim that $I$, $n$, $k$, and $\eta$ are as desired. Work in $V[G]$. Fix $\beta_0 < \beta_1$, both in $I$. In $M$, we have $q_{\beta_0}, q_{\beta_1} \in j(\bb{P}')$ with $a(q_{\beta_0}) = a(q_{\beta_1}) = a^*$ and $\gamma^*_{\beta_0}, \gamma^*_{\beta_1} < j(\kappa_n)$ such that, for $\epsilon < 2$, $q_\epsilon \Vdash ``(j(\beta_\epsilon), \gamma^*_{\beta_\epsilon}) <_{j(\dot{R}_\eta)} (\delta, 0)."$ Since $a(q_{\beta_0}) = a(q_{\beta_1}) = a^*$, we can find $q^* \leq q_{\beta_0}, q_{\beta_1}$ with $a(q^*) = a^*$. Then, since $j(\dot{R}_\eta)$ is forced to be tree-like, we have $q^* \Vdash ``(j(\beta_0), \gamma^*_{\beta_0}) <_{j(\dot{R}_\eta)} (j(\beta_1), \gamma^*_{\beta_1})."$ By elementarity, there are $q \in \bb{P}'$ with $\ell(q) = k$ and $\gamma_0, \gamma_1 < \kappa_n$ such that $q \Vdash ``(\beta_0, \gamma_0) <_{\dot{R}_\eta} (\beta_1, \gamma_1)."$
	\end{proof}
	Fix $n$, $k$, $\eta$, and $I$ as in Lemma \ref{lengthlemma}. Define a system $S_0 = \langle I \times \kappa_n, \{R_a \mid a$ is a lower part of length $k\} \rangle$ by letting $(\beta_0, \gamma_0) <_{R_a} (\beta_1, \gamma_1)$ if and only if there is $q \in \bb{P}'$ such that $a(q) = a$ and $q \Vdash ``(\beta_0, \gamma_0) <_{\dot{R}_\eta} (\beta_1, \gamma_1)."$ By Lemma \ref{lengthlemma}, $S_0$ is a narrow $\lambda$-system. By Lemma \ref{collapselemma}, $\lambda$ satisfies the narrow system property in $V[G]$, so there is a cofinal branch through $S_0$. Namely, there is a cofinal $J \subseteq I$ and a lower part $a$ of length $n$ such that, for every $\beta \in J$, there is $\gamma_\beta < \kappa_n$ such that, whenever $\beta_0 < \beta_1$ are both in $J$, there is $q \in \bb{P}'$ with $a(q) = a$ such that $q \Vdash ``(\beta_0, \gamma_{\beta_0}) <_{\dot{R}_\eta} (\beta_1, \gamma_{\beta_1})"$. Fix such a $J$ and $a$ and an assignment $\beta \mapsto \gamma_\beta$ for $\beta \in J$.

	For $k \leq i < \omega$, let $H_i$ be the set of $(A,F,g)$ such that $A \in U_i$, $F$ is a function with domain $A$ such that, for all $\alpha \in A$, $F(\alpha) \in \mathrm{Coll}(\alpha^{+\omega+2}, < \kappa_i)$ and $j_i(F)(\kappa_i) \in G_i$, and $g \in \mathrm{Coll}(\kappa^{+2}_i, < \kappa_{i+1})$ is in the generic filter induced by $G$. Suppose that, for $\epsilon < 2$, $(A^\epsilon_i, F^\epsilon_i, g^\epsilon_{i+1}) \in H_i$. Then we define $(A^0_i, F^0_i, g^0_{i+1}) \wedge (A^1_i, F^1_i, g^1_{i+1}) = (A_i,F_i,g_{i+1})$ by letting $A_i = \{\alpha \in A^0_i \cap A^1_i \mid F^0_i(\alpha)$ and $F^1_i(\alpha)$ are compatible$\}$, defining $F_i$ on $A_i$ by $F_i(\alpha) = F^0_i(\alpha) \cup F^1_i(\alpha)$, and letting $g_{i+1} = g^0_{i+1} \cup g^1_{i+1}$. Suppose that, for $\epsilon < 2$, $q_\epsilon \in \bb{P}'$ and $a(q_\epsilon) = a$, where $q_\epsilon = a ^\frown \langle A^\epsilon_i, F^\epsilon_i, g^\epsilon_{i+1} \mid k \leq i < \omega \rangle$. Then the greatest lower bound for $q_0$ and $q_1$ is $q = a ^\frown \langle A_i, F_i, g_{i+1} \mid k \leq i < \omega \rangle$ where, for all $k \leq i < \omega$, $(A_i, F_i, g_{i+1}) = (A^0_i, F^0_i, g^0_{i+1}) \wedge (A^1_i, F^1_i, g^1_{i+1})$. 

	By recursion on $i$, we now define $\langle (A^\beta_i, F^\beta_i, g^\beta_{i+1}) \mid \beta \in J, k \leq i < \omega \rangle$, maintaining the inductive hypothesis that, for all $\beta_0 < \beta_1$, both in $J$, and all $\ell$ with $k \leq \ell < \omega$, there is $q \in \bb{P}'$ such that $q \restriction \ell = a^{\frown} \langle (A^{\beta_0}_i, F^{\beta_0}_i, g^{\beta_0}_{i+1}) \wedge (A^{\beta_1}_i, F^{\beta_1}_i, g^{\beta_1}_{i+1}) \mid k \leq i < \ell \rangle$ and $q \Vdash ``(\beta_0, \gamma_{\beta_0}) <_{\dot{R}_\eta} (\beta_1, \gamma_{\beta_1})."$

	Suppose $k \leq \ell < \omega$ and $(A^\beta_i, F^\beta_i, g^\beta_{i+1})$ has been defined for all $\beta \in J$ and all $k \leq i < \ell$. Define a system $S_\ell = \langle J \times \{0\}, \{R_{A,F,g}\mid (A,F,g) \in H_\ell\}$ by letting $(\beta_0, 0) <_{R_{A,F,g}} (\beta_1, 0)$ iff there is $q \in \bb{P}'$ such that $q \restriction (j+1) = a^\frown \langle(A^{\beta_0}_i, F^{\beta_0}_i, g^{\beta_0}_{i+1}) \wedge (A^{\beta_1}_i, F^{\beta_1}_i, g^{\beta_1}_{i+1}) \mid k \leq i < \ell \rangle ^\frown \langle (A,F,g) \rangle$ and $q \Vdash ``(\beta_0, \gamma_{\beta_0}) <_{\dot{R}_\eta} (\beta_1, \gamma_{\beta_1})."$ By the inductive hypothesis, this defines a narrow $\lambda$-system, so, again by Lemma \ref{collapselemma}, it has a cofinal branch, namely a cofinal set $J_\ell \subseteq J$ and a fixed $(A,F,g) \in H_j$ such that, for all $\beta_0 < \beta_1$, both in $J_\ell$, $(\beta_0, 0) <_{R_{A,F,g}} (\beta_1, 0)$. We now define $(A^\beta_\ell, F^\beta_\ell, g^\beta_{\ell+1})$ for $\beta \in J$ as follows. If $\beta \in J_\ell$, then $(A^\beta_\ell, F^\beta_\ell, g^\beta_{\ell+1}) = (A,F,g)$. If $\beta \not\in J_\ell$, let $\beta^* = \min(J_\ell \setminus \beta)$. Find $q \in \bb{P}'$ such that $q \restriction \ell = a^{\frown} \langle(A^\beta_i, F^\beta_i, g^\beta_{i+1}) \wedge (A^{\beta^*}_i, F^{\beta^*}_i, g^{\beta^*}_{i+1}) \mid k \leq i < \ell \rangle$ and $q \Vdash ``(\beta, \gamma_\beta) <_{\dot{R}_\eta} (\beta^*, \gamma_{\beta^*})"$, and let $(A^\beta_\ell, F^\beta_\ell, g^\beta_{\ell+1}) = (A^q_\ell, F^q_\ell, g^q_{\ell+1}) \wedge (A,F,g)$. It is tedious but straightforward to verify that this definition maintains the inductive hypothesis.

	For $\beta \in J$, let $q_\beta = a^\frown \langle (A^\beta_i, F^\beta_i, g^\beta_{i+1}) \mid k \leq i < \omega \rangle$ and note that $q_\beta \in \bb{P}'$. For $\beta_0 < \beta_1$, both in $J$, let $q_{\beta_0, \beta_1} := q_{\beta_0} \wedge q_{\beta_1}$ denote the greatest lower bound of $q_{\beta_0}$ and $q_{\beta_1}$.

	\begin{claim}
		Suppose $\beta_0 < \beta_1$, both in $J$. Then $q_{\beta_0, \beta_1} \Vdash ``(\beta_0, \gamma_{\beta_0}) <_{\dot{R}_\eta} (\beta_1, \gamma_{\beta_1})."$
	\end{claim}

	\begin{proof}
		Suppose not, and let $r \in \bb{P}'$, $r \leq q_{\beta_0, \beta_1}$ be such that $r \Vdash ``(\beta_0, \gamma_{\beta_0}) \not<_{\dot{R}_\eta} (\beta_1, \gamma_{\beta_1})."$ Let $i = \ell(r)$. By the inductive hypothesis in the previous construction, we can find $q \in \bb{P'}$ such that $q \restriction i = q_{\beta_0, \beta_1} \restriction i$ and $q \Vdash ``(\beta_0, \gamma_{\beta_0}) <_{\dot{R}_\eta} (\beta_1, \gamma_{\beta_1})"$. But it is easily seen that $r$ and $q$ are compatible in $\bb{P}'$, which is a contradiction.
	\end{proof}

	\begin{claim}
		There is $q \in \bb{P}'$ such that $q \Vdash ``$for unboundedly many $\beta \in J, q_\beta \in \dot{H}"$, where $\dot{H}$ is the canonical name for the generic filter.
	\end{claim}

	\begin{proof}
		Suppose not. Then there is a maximal antichain $A \subseteq \bb{P}'$ such that, for every $r \in A$, there is $\beta_r < \lambda$ such that, for all $\beta \in J \setminus \beta_r$, $r \Vdash ``q_\beta \not\in \dot{H}."$ Since $\bb{P}'$ has the $\lambda$-c.c., $|A| < \lambda$, so $\beta^* := \sup(\{\beta_r \mid r \in A\}) < \lambda$, and therefore, for all $\beta \in J \setminus \beta^*$, $\Vdash ``q_\beta \not\in \dot{H}."$ But clearly, if $\beta \in J \setminus \beta^*$, $q_\beta \Vdash ``q_\beta \in \dot{H},"$ which is a contradiction.
	\end{proof}

	Let $q \in \bb{P}'$ be as given in the previous claim, and let $H$ be $\bb{P}'$-generic with $q \in H$. Let $S$ be the realization of $\dot{S}$. Then, in $V[G][H]$, $\{(\beta, \gamma_\beta) \mid \beta \in J$ and $q_\beta \in H\}$ is a cofinal branch of $S$ through $R_\eta$. Thus, $S$ has a cofinal branch in $V[G][H]$. Note that, as $H$ is $\bb{P}'$-generic over $V[G]$, it is also $\bb{P}^*$-generic over $V[G^*]$, so $V[G][H] = V[G^*][H][G/G^*] = V[H][G/G^*]$, where $G/G^*$ is $\bb{S}_m/G^*$-generic over $V[G^*][H] = V[H]$. By Lemma \ref{cclemma}, $\bb{S}_m/G^* \times \bb{S}_m/G^*$ has the $\lambda$-c.c. in $V[H]$, so, by Lemma \ref{approxlemma}, $\bb{S}_m/G^*$ has the $\lambda$-approximation property in $V[H]$. Therefore, by Lemma \ref{approxbranchprop}, $S$ has a cofinal branch in $V[H]$. But $p \in H$ and $p$ forced that $\dot{S}$ had no cofinal branch. This is a contradiction.
\end{proof}

\section{Separating properties} \label{separatingSect}

In this section, we show that the tree property and the strong system property are in general not equivalent at $\aleph_{\omega^2 + 1}$ and that we have some amount of control over the number of relations needed to construct a strong $\aleph_{\omega^2+1}$-system with no cofinal branch. We first review some facts about forcing and square sequences.

Recall that, if $\kappa, \lambda$, and $\mu$ are cardinals, with $\kappa \leq \mu$, $\kappa$ regular, and $\lambda > 1$, then $\bb{B}(\kappa, \lambda, \mu)$ is the forcing to add a $\square^{\geq \kappa}_{\mu, < \lambda}$-sequence. For our purposes, let $\kappa < \mu$ with $\kappa$ regular and $\mu$ singular of cofinality $\omega$, and let $\bb{B} = \bb{B}(\kappa, \mu, \mu)$.

In $V^{\bb{B}}$, let $\vec{\mathcal{C}} = \langle \mathcal{C}_\alpha \mid \alpha \in S \rangle$ be the $\square^{\geq \kappa}_{\mu, < \mu}$-sequence added by $\bb{B}$. Let $\nu < \mu$ be regular, and let $\bb{T}_\nu$ be the forcing poset whose conditions are closed, bounded subsets $t$ of $\mu^+$ such that:
\begin{itemize}
	\item{$|t| < \nu$;}
	\item{for all $\alpha \in t'$, $\alpha \in S$ and $t \cap \alpha \in \mathcal{C}_\alpha$.}
\end{itemize}
If $s,t \in \bb{T}_\nu$, then $t \leq s$ iff $t$ end-extends $s$. For a cardinal $\epsilon$, let $\bb{T}_\nu^\epsilon$ denote the full-support product of $\epsilon$ copies of $\bb{T}_\nu$. Let $\dot{\bb{T}}_\nu$ be a $\bb{B}$-name for $\bb{T}_\nu$.

\begin{proposition}
	In $V$, for every $\epsilon < \mu$, $\bb{B} * \dot{\bb{T}}_\nu^\epsilon$ has a dense $\nu$-closed subset.
\end{proposition}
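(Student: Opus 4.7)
The plan is to exhibit an explicit dense subset $D \subseteq \bb{B} * \dot{\bb{T}}_\nu^\epsilon$ of ``synchronized master conditions'' and then verify $\nu$-closure on $D$. Specifically, I would let $D$ consist of all $(p, \langle \check{t}_i \mid i < \epsilon \rangle)$ such that each $t_i \in V$ is a closed bounded subset of $\mu^+$ with $|t_i| < \nu$, $\max(t_i) = \gamma^p$, and for every $\alpha \in t_i'$ one has $\alpha \in s^p$ and $t_i \cap \alpha \in \mathcal{C}^p_\alpha$. These requirements are exactly what is needed to force each $t_i$ to be a valid $\dot{\bb{T}}_\nu$-condition in $V^{\bb{B}}$, since $\vec{\mathcal{C}}$ extends $\vec{\mathcal{C}}^p$ on $s^p$.

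For $\nu$-closure on $D$, successor steps are trivial, so suppose we have a descending sequence $\langle (p_\xi, \langle \check{t}_i^\xi \mid i < \epsilon \rangle) \mid \xi < \eta \rangle$ of limit length $\eta < \nu$. I would set $\gamma = \sup_\xi \gamma^{p_\xi}$, $\bar{t}_i = \bigcup_\xi t_i^\xi$, and $t_i = \bar{t}_i \cup \{\gamma\}$, and define a lower bound $p$ by $s^p = \bigcup_\xi s^{p_\xi} \cup \{\gamma\}$, $\mathcal{C}^p_\alpha = \mathcal{C}^{p_\xi}_\alpha$ for $\alpha \in s^{p_\xi}$, and $\mathcal{C}^p_\gamma = \{\bar{t}_i \mid i < \epsilon\}$. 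Regularity of $\nu$ gives $|\bar{t}_i| < \nu$; each $\bar{t}_i$ is club in $\gamma$ because it contains every $\gamma^{p_\xi}$; coherence at $\alpha \in \bar{t}_i'$ with $\alpha < \gamma$ is inherited via $\bar{t}_i \cap \alpha = t_i^\xi \cap \alpha$ for any $\xi$ with $\alpha \leq \gamma^{p_\xi}$; and coherence at $\gamma$ itself is built directly into $\mathcal{C}^p_\gamma$. Crucially, the $\bb{B}$-component $p$ is itself what certifies each $\bar{t}_i$ as a legitimate thread by placing it into $\mathcal{C}^p_\gamma$, which is precisely why the iteration closes even though $\bb{T}_\nu$ need not be $\nu$-closed in $V^{\bb{B}}$ on its own.

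For density, given $(q, \langle \dot{s}_i \mid i < \epsilon \rangle) \in \bb{B} * \dot{\bb{T}}_\nu^\epsilon$, I would first use the $\mu^+$-c.c.\ of $\bb{B}$ (a standard consequence of conditions being bounded below $\mu^+$) to bound $\max(\dot{s}_i)$ for each $i$ by some $\beta^* < \mu^+$. Then, leveraging the $\mu+1$-strategic closure of $\bb{B}$ (and hence its $\mu$-distributivity), I would extend $q$ to $q^*$ deciding each $\dot{s}_i$ completely, say $q^* \Vdash \dot{s}_i = \check{s}_i^*$. Finally, I would pass to $p \leq q^*$ with $\gamma^p$ a successor ordinal strictly above $\max_i \max(s_i^*)$, augmenting $s^p$ by the required ordinals of cofinality $\geq \kappa$ in $(\gamma^{q^*}, \gamma^p)$ together with the necessary new limit points of the $s_i^*$'s (at each such $\alpha$ setting $\mathcal{C}^p_\alpha \ni s_i^* \cap \alpha$), and defining $\mathcal{C}^p_{\gamma^p}$ trivially since $\gamma^p$ is a successor. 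Setting $t_i = s_i^* \cup \{\gamma^p\}$ then lands us in $D$: because $\gamma^p$ is isolated in $t_i$, we have $t_i' = (s_i^*)'$, so coherence at these old limit points follows from the validity of $s_i^*$ already forced by $q^*$.

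The main obstacle is the density step, particularly the extension from $q$ to $q^*$ deciding each $\dot{s}_i$ completely and then further to a $p$ that simultaneously (a) synchronizes $\gamma^p$ with the tops of all the $t_i$'s and (b) fills in $\mathcal{C}^p_\alpha$ coherently at every new limit point of any $s_i^*$ lying above $\gamma^{q^*}$. The $\nu$-closure argument, by contrast, is a routine closing-off that hinges on the regularity of $\nu$ and on the freedom to absorb the limit threads $\bar{t}_i$ directly into $\mathcal{C}^p_\gamma$.
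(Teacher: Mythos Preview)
Your proposal is correct and takes essentially the same approach as the paper: your dense set $D$ is exactly the paper's $\bb{U}$ (the coherence clause you add to the definition of $D$ is implicit in the paper's requirement that $p \Vdash \dot{t}_\eta = \check{t}_\eta$, since $\max(t_\eta) = \gamma^p$ forces all limit points of $t_\eta$ to be handled by $p$ itself), and your closure argument via $\mathcal{C}^p_\gamma = \{\bar{t}_i \mid i < \epsilon\}$ is the standard one the paper defers to \cite{cfm}. One small simplification for your density step: rather than bounding maxima via the chain condition and then separately building $p$ by hand above $\gamma^{q^*}$, you can argue directly that the set of $q^*$ which decide every $\dot{s}_i$ \emph{and} already have $\gamma^{q^*}$ above every $\max(s_i^*)$ (hence with all relevant limit points and threads already recorded in $q^*$) is dense below $q$---this follows immediately from $\leq\!\mu$-distributivity together with genericity, and avoids the somewhat delicate ``augmenting $s^p$'' bookkeeping you flag as the main obstacle.
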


\begin{proof}
	Let $\bb{U}$ be the set of $(p, \langle \dot{t}_\eta \mid \eta < \epsilon \rangle) \in \bb{B} * \dot{\bb{T}}$ such that, for all $\eta < \epsilon$:
	\begin{itemize}
		\item{there is $t_\eta \in V$ such that $p \Vdash ``\dot{t}_\eta = t_\eta"$;}
		\item{$\gamma^p = \max(t_\eta)$.}
	\end{itemize}
	The verification that $\bb{U}$ is dense and $\nu$-closed is straightforward. See, for example, \cite{cfm} for similar arguments.
\end{proof}

\begin{corollary}
	Forcing with $\bb{T}_\nu$ over $V^{\bb{B}}$ adds a club $D$ in $\mu^+$ such that:
	\begin{itemize}
		\item{$\mathrm{otp}(D) = \nu$;}
		\item{for all $\alpha \in D'$, $\alpha \in S$ and $D \cap \alpha \in \mathcal{C}_\alpha$.}
	\end{itemize}
\end{corollary}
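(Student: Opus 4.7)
Let $G$ be $\bb{T}_\nu$-generic over $V^{\bb{B}}$ and set $D = \bigcup G$; the plan is to verify that $D$ is a club in $\mu^+$ of order type $\nu$ coherent with $\vec{\mathcal{C}}$ at its limit points. Unboundedness of $D$ follows from the density of $\{t \in \bb{T}_\nu : \max(t) > \alpha\}$ for each $\alpha < \mu^+$: given any $t$, pick a successor ordinal $\beta$ with $\beta > \max(\max(t), \alpha)$ and extend to $t \cup \{\beta\}$; because $\beta$ is a successor above $\max(t)$, no new limit points are created, so $(t \cup \{\beta\})' = t'$ and the coherence side-conditions carry over. For closure, suppose $\alpha < \mu^+$ with $\alpha = \sup(D \cap \alpha)$; picking $\beta \in D$ with $\beta > \alpha$ and $t \in G$ with $\beta \in t$, the end-extension clause of $\leq$ forces $D \cap (\alpha + 1) = t \cap (\alpha + 1)$, and closure of $t$ then gives $\alpha \in t \subseteq D$.

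For the order type, the same compatibility observation yields the upper bound: for $\alpha \in D$, fix $t \in G$ with $\alpha \in t$, so that $|D \cap \alpha| \leq |t| < \nu$, and regularity of $\nu$ gives $\mathrm{otp}(D) \leq \nu$. For the reverse inequality I would invoke the preceding proposition: $\bb{B} * \dot{\bb{T}}_\nu$ has a dense $\nu$-closed subset in $V$, hence is $\nu$-distributive, so every set of ordinals of size less than $\nu$ in the final model already lies in $V$. If $\mathrm{otp}(D) < \nu$, then $D$ itself would be such a set; but $D \in V$ combined with the unboundedness of $D$ in $\mu^+$ would contradict the $V$-regularity of $\mu^+$, since $|D| < \nu < \mu$.

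Coherence is then essentially immediate: for $\alpha \in D'$, choose $\beta \in D$ with $\beta > \alpha$ and $t \in G$ with $\beta \in t$; the compatibility equality $D \cap (\alpha + 1) = t \cap (\alpha + 1)$ gives $\alpha = \sup(t \cap \alpha) < \max(t)$, so $\alpha$ is a proper limit point of the closed set $t$, whence $\alpha \in t' \cap t$, and the defining clause of $\bb{T}_\nu$ delivers $\alpha \in S$ together with $D \cap \alpha = t \cap \alpha \in \mathcal{C}_\alpha$. The one delicate step is the lower bound on $\mathrm{otp}(D)$: a direct density argument inside $V^{\bb{B}}$ to produce conditions of successively larger order type is awkward because any extension through a limit stage must absorb a new limit point into $S$ equipped with a $\vec{\mathcal{C}}$-value, so routing through the $\nu$-distributivity provided by the preceding proposition is the cleaner path.
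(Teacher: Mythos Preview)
Your argument is correct, and the paper states this corollary without proof, treating it as an immediate consequence of the preceding proposition together with the definition of $\bb{T}_\nu$. Your write-up is exactly the natural fleshing-out: routine density arguments for unboundedness, closure, and coherence, the size bound on conditions for $\mathrm{otp}(D) \leq \nu$, and the $\nu$-distributivity of $\bb{B} * \dot{\bb{T}}_\nu$ (from the dense $\nu$-closed subset) for $\mathrm{otp}(D) \geq \nu$.
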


Let $\langle \kappa_n \mid n < \omega \rangle$ be an increasing sequence of indestructibly supercompact cardinals. Let $\mu = \sup(\{\kappa_n \mid n < \omega\})$, and let $\lambda = \mu^+$. As before, for $m < \omega$, let $\bb{S}_m$ be the full-support product $\prod_{m \leq n < \omega} \mathrm{Coll}(\kappa_n^{+2}, < \kappa_{n+1})$, and let $\bb{S}^*$ be defined as in the previous section. Fix an $m < \omega$, and, in $V^{\bb{S}^*}$, let $\bb{B} = \bb{B}(\kappa_m, \mu, \mu)$. Let $\dot{\bb{B}}$ be an $\bb{S}^*$-name for $\bb{B}$, and note that $\bb{S}_m * \dot{\bb{B}}$ is $\kappa_m$-directed closed.

\begin{lemma} \label{distributivelemma}
	In $V^{\bb{S}_m}$, $\bb{B}$ is $\lambda$-distributive.
\end{lemma}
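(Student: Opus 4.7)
The plan is to exploit the factorization $\bb{S}_m \cong \bb{S}^{*} * \dot{\bb{Q}}$, where $\dot{\bb{Q}}$ is the $\bb{S}^{*}$-name for the quotient forcing induced by the projection $\pi_m$, together with the strategic closure of $\bb{B}$ already available in $V^{\bb{S}^{*}}$. By the proposition on $\bb{B}(\kappa, \lambda, \mu)$ stated earlier in this section, $\bb{B}$ is $\mu+1$-strategically closed in $V^{\bb{S}^{*}}$ and hence $\lambda$-distributive there. My strategy is to show that the same kind of strategic closure holds for $\bb{Q}$ in $V^{\bb{S}^{*}}$, so that the product $\bb{Q} \times \bb{B}$ is $\mu+1$-strategically closed in $V^{\bb{S}^{*}}$ (player II simply runs the two strategies in parallel, coordinate by coordinate) and therefore $\lambda$-distributive there.

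Granted this, let $G_m$ be $\bb{S}_m$-generic over $V$, let $G^{*}$ be the induced $\bb{S}^{*}$-generic, and let $H$ be $\bb{B}$-generic over $V[G_m]$. Then $H$ is also $\bb{B}$-generic over the smaller model $V[G^{*}]$, and, by the standard two-step to product-forcing conversion, the pair $(G_m / G^{*}, H)$ is $\bb{Q} \times \bb{B}$-generic over $V[G^{*}]$, so $V[G_m][H] = V[G^{*}][(G_m / G^{*}) \times H]$. The $\lambda$-distributivity of $\bb{Q} \times \bb{B}$ over $V[G^{*}]$ then implies that every bounded subset of $\lambda$ in $V[G_m][H]$ lies already in $V[G^{*}] \subseteq V[G_m]$, which is exactly the desired $\lambda$-distributivity of $\bb{B}$ in $V^{\bb{S}_m}$.

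The main obstacle is verifying the $\mu+1$-strategic closure of the quotient $\bb{Q} = \bb{S}_m / G^{*}$ in $V^{\bb{S}^{*}}$. Since each factor $\mathrm{Coll}(\kappa_n^{+2}, < \kappa_{n+1})$ of the full-support product $\bb{S}_m$ is $\kappa_n^{+2}$-closed, lower bounds at game stages of cofinality below $\kappa_m^{+2}$ are immediate. The delicate part is handling limit stages of cofinality in the interval $[\kappa_m^{+2}, \mu)$: at such a stage one has to build a lower bound coordinate-wise and use the genericity of $G^{*}$ to arrange that the resulting condition's tail sits inside the equivalence class already determined by $G^{*}$, so that the lower bound actually belongs to $\bb{Q}$ rather than merely to $\bb{S}_m$. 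Producing a winning strategy along these lines is analogous to the standard proof that full-support products of strategically closed posets are strategically closed, and I expect it to go through without serious difficulty, but it is the technical heart of the argument and must be written out carefully.
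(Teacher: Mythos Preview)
Your approach has a genuine gap: the quotient $\bb{Q} = \bb{S}_m / G^{*}$ is \emph{not} $\mu+1$-strategically closed in $V^{\bb{S}^{*}}$; indeed it is not even $(\kappa_m^{+2}+1)$-strategically closed. The obstruction is not on the tail but on the very first coordinate. The $m$-th factor of $\bb{S}_m$ is $\mathrm{Coll}(\kappa_m^{+2}, < \kappa_{m+1})$, which is only $\kappa_m^{+2}$-closed. In the game of length $\kappa_m^{+2}+1$, player~I may strictly extend the $m$-th coordinate at every one of his moves, producing a strictly decreasing $\kappa_m^{+2}$-sequence in $\mathrm{Coll}(\kappa_m^{+2}, < \kappa_{m+1})$; the union of these conditions has domain of size $\kappa_m^{+2}$ and is therefore not a condition, so there is no lower bound at all, in $\bb{S}_m$ or anywhere else. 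Player~II cannot play at stage $\kappa_m^{+2}$, regardless of strategy. Passing to the quotient by $G^{*}$ changes nothing, since the ordering on $\bb{Q}$ is simply the restriction of the $\bb{S}_m$-ordering, and the genericity of $G^{*}$ only constrains the tail, not the $m$-th coordinate. Your intuition that the difficulty lies in arranging the tail to land in $G^{*}$ misidentifies the issue.

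The paper sidesteps this entirely. Rather than seeking $\mu$-distributivity of any auxiliary poset in one shot, it observes that, since $\mu$ is singular, it suffices to prove $\kappa_n$-distributivity of $\bb{B}$ in $V^{\bb{S}_m}$ for each $n < \omega$ separately. For a fixed $n \geq m$ it uses a \emph{different} factorization, namely $\bb{S}_m \cong \bb{S}_{m,n} \times \bb{S}_n$, together with the threading poset $\bb{T}_{\kappa_n}$ from earlier in the section. The crucial input is the proposition that $\bb{B} * \dot{\bb{T}}_{\kappa_n}$ has a dense $\kappa_n$-\emph{closed} (not merely strategically closed) subset; threading converts the strategic closure of $\bb{B}$ into genuine closure at level $\kappa_n$. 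Since $\bb{S}_n$ is $\kappa_n^{+2}$-closed and $\bb{S}_{m,n}$ has the $\kappa_n$-c.c., Easton's Lemma yields that $\bb{B} * \dot{\bb{T}}_{\kappa_n}$, and hence $\bb{B}$, is $\kappa_n$-distributive in $V^{\bb{S}_m}$.
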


\begin{proof}
	Since $\lambda = \mu^+$ and $\mu$ is singular, it suffices to show that $\bb{B}$ is $\nu$-distributive for all regular $\nu < \mu$. In particular, it suffices to show that $\bb{B}$ is $\kappa_n$-distributive for all $n < \omega$. To this end, fix $n < \omega$. Without loss of generality, $m \leq n$. In $V^{\bb{S^*} * \dot{\bb{B}}}$, let $\bb{T} = \bb{T}_{\kappa_n}$, as defined above. Recall that, in $V^{\bb{S^*}}$, $\bb{B} * \dot{\bb{T}}$ has a $\kappa_n$-closed dense subset. In $V$, let $\bb{S}_{m,n} = \prod_{m \leq k < n} \mathrm{Coll}(\kappa_k^{+2}, < \kappa_{k+1})$. Note that $\bb{S}_m \cong \bb{S}_{m, n} \times \bb{S}_n$ and that $\bb{S}_{m,n}$ has the $\kappa_n$-c.c. Since $\bb{S}_n$ is $\kappa_n^{+2}$-closed, we still have in $V^{\bb{S}_n}$ that $\bb{S}_{m,n}$ has the $\kappa_n$-c.c. and $\bb{B} * \dot{\bb{T}}$ has a $\kappa_n$-closed dense subset. Thus, by Easton's Lemma, $\bb{B} * \dot{\bb{T}}$ is $\kappa_n$-distributive in $V^{\bb{S}_n \times \bb{S}_{m,n}} = V^{\bb{S}_m}$. In particular, $\bb{B}$ is $\kappa_n$-distributive in $V^{\bb{S}_m}$. 
\end{proof}

\begin{lemma}
	In $V^{\bb{S}_m * \dot{\bb{B}}}$, $\lambda$ satisfies the narrow system property.
\end{lemma}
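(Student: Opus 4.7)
The proof will adapt the argument of Lemma \ref{collapselemma}, modified to accommodate the extra forcing $\bb{B}$. Let $\dot{T}$ be an $\bb{S}_m * \dot{\bb{B}}$-name for a narrow $\lambda$-system, which we may assume takes the form $\langle \lambda \times \kappa, \{\dot{R}_\eta \mid \eta < \nu\} \rangle$ with $\kappa, \nu < \kappa_{n^*}$ for some fixed $n^* > m$. Since $\dot{\bb{B}}$ is an $\bb{S}^*$-name and the $\bb{S}^*$-generic is determined by any tail of $\bb{S}_0$, we may factor $\bb{S}_m * \dot{\bb{B}} \cong \bb{S}_{m, n^*+1} \times (\bb{S}_{n^*+1} * \dot{\bb{B}})$.

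To obtain a usable supercompactness embedding, I will introduce the auxiliary forcing $\dot{\bb{T}}_{\kappa_{n^*+1}}$ defined earlier in the section. Since $\bb{B} * \dot{\bb{T}}_{\kappa_{n^*+1}}$ admits a $\kappa_{n^*+1}$-directed closed dense subset and $\bb{S}_{n^*+1}$ is $\kappa_{n^*+1}^{+2}$-directed closed, the iteration $\bb{S}_{n^*+1} * \dot{\bb{B}} * \dot{\bb{T}}_{\kappa_{n^*+1}}$ is $\kappa_{n^*+1}$-directed closed on a dense subset. By the indestructibility of the supercompactness of $\kappa_{n^*+1}$, it retains its $\lambda$-supercompactness in the extension $V_1$ obtained by forcing with this iteration. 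Take $j: V_1 \to M$ witnessing this, and lift $j$ through $\bb{S}_{m, n^*+1}$ exactly as in Lemma \ref{collapselemma}: $j(\bb{S}_{n^*, n^*+1}) \cong \bb{S}_{n^*, n^*+1} * \dot{\bb{R}}$ with $\bb{R}$ a $\kappa_{n^*}^{+2}$-closed quotient, producing a lift $j: V_1[G_{n^*, n^*+1}][G_{m, n^*}] \to M[G_{n^*, n^*+1}][G_{m, n^*}][H_1]$. Note that $V_1[G_{n^*, n^*+1}][G_{m, n^*}]$ equals $V^{\bb{S}_m * \bb{B}}[K]$, where $K$ is the $\bb{T}_{\kappa_{n^*+1}}$-generic coming from $V_1$.

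Setting $\delta = \sup j``\lambda$, I will define $b_{\gamma, \eta} := \{(\alpha, \beta) \in \lambda \times \kappa \mid (j(\alpha), \beta) <_{j(R_\eta)} (\delta, \gamma)\}$ for $\gamma < \kappa$ and $\eta < \nu$. By elementarity together with the tree-likeness of each $j(R_\eta)$ and condition (4) of $j(T)$ as a $j(\lambda)$-system (applied to the pair $j(\alpha) < \delta$), the collection $\{b_{\gamma, \eta}\}$ is a full set of branches through $T$ in $V^{\bb{S}_m * \bb{B}}[K][H_1]$. An application of Lemma \ref{preservationtheorem} over the ground $V_1[G_{n^*, n^*+1}] = V^{\bb{S}_{n^*} * \bb{B} * \bb{T}_{\kappa_{n^*+1}}}$, with $\bb{P} = \bb{S}_{m, n^*}$ (which retains the $\kappa_{n^*}$-c.c., since $\bb{B}$ and $\bb{T}_{\kappa_{n^*+1}}$ are sufficiently distributive to preserve antichains of $\bb{S}_{m, n^*}$) and $\bb{Q} = \bb{R}$ (which is $\kappa_{n^*}^{+2}$-closed), then yields a cofinal branch through $T$ in $V^{\bb{S}_m * \bb{B}}[K]$.

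The final step is to transfer this cofinal branch from $V^{\bb{S}_m * \bb{B}}[K]$ back down to $V^{\bb{S}_m * \bb{B}}$. For this, I aim to establish the $\lambda$-approximation property for $\bb{T}_{\kappa_{n^*+1}}$ over $V^{\bb{S}_m * \bb{B}}$; combined with Lemma \ref{approxbranchprop}, this produces the desired cofinal branch in $V^{\bb{S}_m * \bb{B}}$. By Lemma \ref{approxlemma}, it suffices to verify that $\bb{T}_{\kappa_{n^*+1}}^2$ has the $\lambda$-c.c.\ in $V^{\bb{S}_m * \bb{B}}$, which I expect to do via a $\Delta$-system argument exploiting the size bound $\kappa_{n^*+1}$ on conditions and the GCH-like cardinal arithmetic inherited in $V^{\bb{S}_m * \bb{B}}$. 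I expect this chain-condition verification to be the main obstacle in the proof, demanding careful attention to the coherence structure of the $\square$-sequence added by $\bb{B}$.
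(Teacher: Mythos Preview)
Your argument tracks the paper's proof closely through the construction of the full set of branches, but the final step---pulling the cofinal branch back from $V^{\bb{S}_m * \dot{\bb{B}}}[K]$ to $V^{\bb{S}_m * \dot{\bb{B}}}$ via the $\lambda$-approximation property of $\bb{T}_{\kappa_{n^*+1}}$---cannot work. The threading poset $\bb{T}_{\kappa_{n^*+1}}$ adds a club in $\mu^+ = \lambda$ of order type $\kappa_{n^*+1}$ (this is precisely the content of the Corollary stated just before the lemma), so after forcing with it, $\cf(\lambda) = \kappa_{n^*+1} < \lambda$. A poset that singularizes $\lambda$ cannot have the $\lambda$-c.c., and hence neither can $\bb{T}_{\kappa_{n^*+1}}^2$; no $\Delta$-system argument will rescue this. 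The ``main obstacle'' you anticipate is in fact insurmountable.

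The paper circumvents this by never splitting off the $\bb{T}$-step. Instead of applying Lemma~\ref{preservationtheorem} to remove $\bb{R}$ and then trying to remove $\bb{T}$ separately, the paper applies Neeman's Lemma~\ref{neemanlemma} to the single forcing $\bb{T} * \dot{\bb{R}}$ over the model $V[G_{n^*+1}][G_{n^*,n^*+1}][G_{m,n^*}][H] = V^{\bb{S}_m * \dot{\bb{B}}}$. Neeman's lemma requires only that a suitable full-support \emph{power} of the forcing be sufficiently \emph{distributive}, not that the forcing have any chain condition. The paper verifies that $(\bb{T} * \dot{\bb{R}})^{\kappa_{n^*}}$ is $\kappa_{n^*}$-distributive in that model by combining the $\kappa_{n^*+1}$-closed dense subset of $\bb{B} * \dot{\bb{T}}^{\kappa_{n^*}}$ with Easton's Lemma against the small factor $\bb{S}_{m,n^*}$. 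This distributivity is exactly what survives even though $\bb{T}$ itself collapses $\lambda$, and it is the key idea your proposal is missing.
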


\begin{proof}
	First note that, by Lemma \ref{distributivelemma}, $\lambda = \kappa_{m}^{+\omega+1}$ in $V^{\bb{S}_{m} * \dot{\bb{B}}}$. Let $\dot{S}$ be an $\bb{S}_{m} * \dot{\bb{B}}$-name for a narrow $\lambda$-system. Without loss of generality, we may assume there are $\kappa, \nu < \mu$ such that $\dot{S}$ is forced to be of the form $\langle \lambda \times \kappa, \{\dot{R}_\eta \mid \eta < \nu\} \rangle$. Fix $m < n^* < \omega$ such that $\kappa, \nu < \kappa_{n^*}$. In $V^{\bb{S}^* * \dot{\bb{B}}}$, let $\bb{T} = \bb{T}_{\kappa_{n^*+1}}$. Let $G_{n^*+1}$ be $\bb{S}_{n^*+1}$-generic over $V$ and let $H*I$ be $\bb{B} * \dot{\bb{T}}$-generic over $V[G_{n^*+1}]$. Since $\bb{S}_{n^*+1} * \dot{\bb{B}} * \dot{\bb{T}}$ has a $\kappa_{n^*+1}$-closed dense subset, $\kappa_{n^*+1}$ remains supercompact in $V[G_{n^*+1}*H*I]$. Move to $V[G_{n^*+1}*H*I]$, which we denote $V_1$, reinterpreting $\dot{S}$ as an $\bb{S}_{m, n^*+1}$-name.

	Let $j:V_1 \rightarrow M$ witness that $\kappa_{n^*+1}$ is $\lambda$-supercompact. $j(\bb{S}_{m,n^*}) = \bb{S}_{m,n^*}$ and $j(\bb{S}_{n^*, n^*+1}) = \mathrm{Coll}(\kappa_{n^*}^{+2}, < j(\kappa_{n^*+1})) \cong \bb{S}_{n^*, n^*+1} * \dot{\bb{R}}$, where $\dot{\bb{R}}$ is $\kappa_{n^*}^{+2}$-closed in $V_1^{\bb{S}_{n^*, n^*+1}}$. Let $G_{n^*, n^*+1}$ be $\bb{S}_{n^*, n^*+1}$-generic over $V_1$, and let $\bb{R}$ be the interpretation of $\dot{\bb{R}}$ in $V_1[G_{n^*, n^*+1}]$. Then, letting $G_{m,n^*}$ be $\bb{S}_{m,n^*}$ over $V_1[G_{n^*, n^*+1}]$ and $J$ be $\bb{R}$-generic over $V_1[G_{n^*, n^*+1}][G_{m^*,n}]$, we can lift $j$ to $j:V_1[G_{n^*,n^*+1}][G_{m,n^*}] \rightarrow M[G_{n^*,n^*+1}][G_{m,n^*}][J]$.

	Let $S = \langle \lambda \times \kappa, \{R_\eta \mid \eta < \nu \} \rangle$ be the realization of $\dot{S}$ in $V_1[G_{n^*,n^*+1}][G_{m,n^*}]$. In $M[G_{n^*,n^*+1}][G_{m,n^*}][J]$, $j(S) = \langle j(\lambda) \times \kappa, \{j(R_\eta) \mid \eta < \nu\} \rangle$ is a $j(\lambda)$-system. Let $\delta = \sup(j``\lambda)$. For all $\gamma < \kappa$ and $\eta < \nu$, let $b_{\gamma, R_\eta} = \{(\alpha, \beta) \in \lambda \times \kappa \mid (j(\alpha), \beta) <_{j(R_\eta)} (\delta, \gamma)\}$. $\bar{b} := \{b_{\gamma, R_\eta} \mid \gamma < \kappa, \eta < \nu\}$ is a full set of branches through $S$, and $\bar{b} \in V[G_{n^*+1}*H*I][G_{n^*, n^*+1}][G_{m,n^*}][J]$.

	In $V[G_{n^* + 1}]$, $\bb{B} * \dot{\bb{T}}^{\kappa_{n^*}}$ has a dense $\kappa_{n^*+1}$-closed subset. Moreover, $\bb{S}_{n^*, n^*+1}$ is $\kappa_{n^*}$-closed in $V[G_{n^*+1}]$ and, in $V[G_{n^*+1}*H*I][G_{n^*,n^*+1}]$, $\bb{R}$ is $\kappa_{n^*}$-closed. Thus, in $V[G_{n^*+1}][G_{n^*, n^*+1}]$, $\bb{B} * \dot{\bb{T}}^{\kappa_{n^*}} * \dot{\bb{R}}^{\kappa_{n^*}} \cong \bb{B} * (\dot{\bb{T}} * \dot{\bb{R}})^{\kappa_{n^*}}$ has a dense $\kappa_{n^*}$-closed subset. $\bb{S}_{m,n^*}$ has the $\kappa_{n^*}$-c.c. in $V[G_{n^*+1}][G_{n^*, n^*+1}]$, so, by Easton's Lemma, $\bb{B} * (\dot{\bb{T}} * \dot{\bb{R}})^{\kappa_{n^*}}$ is $\kappa_{n^*}$-distributive in $V[G_{n^*+1}][G_{n^*, n^*+1}][G_{m,n^*}]$, so $(\bb{T} * \dot{\bb{R}})^{\kappa_{n^*}}$ is $\kappa_{n^*}$-distributive in $V[G_{n^*+1}][G_{n^*, n^*+1}][G_{m,n^*}][H]$. Thus, we can apply Lemma \ref{neemanlemma} in $V[G_{n^*+1}][G_{n^*, n^*+1}][G_{m,n^*}][H]$ to $\bb{T}*\dot{\bb{R}}$ to conclude that $S$ has a cofinal branch in $V[G_{n^*+1}][G_{n^*, n^*+1}][G_{m,n^*}][H]$, thereby completing the proof.
\end{proof}

For $k < \omega$, let $U_k$ be a normal ultrafilter on $\kappa_k$. For $k \not= m$, the choice is arbitrary. For $k = m$, note that $\kappa_m$ remains supercompact in $V^{\bb{S}_m * \dot{\bb{B}}}$. Choose a condition $(s^*, \dot{t}^*) \in \bb{S}_m * \dot{\bb{B}}$ and a normal ultrafilter $U_m$ on $\kappa_m$ such that $(s^*, \dot{t}^*)$ forces $U_m$ to be the projection of a normal ultrafilter $F_m$ on $\mathcal{P}_{\kappa_m}(\lambda)$ in $V^{\bb{S}_m * \dot{\bb{B}}}$. Using, the $U_k$'s, define filters $G_k$ and a diagonal Prikry poset $\bb{P}$ as in the previous section.

Recall that $\kappa_{-1} = \omega$. In $V^{\bb{P} * \dot{\bb{B}}}$, $\kappa_m = \aleph_{\omega \cdot (m + 1) + 3}$, $\mu = \aleph_{\omega^2}$, and $\lambda = \aleph_{\omega^2 + 1}$. Also, $\square^{\geq \kappa_m}_{\mu, < \mu}$ holds. In particular, there is a strong $\lambda$-system with $\aleph_{\omega \cdot (m + 1) + 2}$ conditions.

\begin{theorem}
	There is a generic extension by $\bb{P} * \dot{\bb{B}}$ in which every strong $\lambda$-system with $\kappa_{m-1}^{+2}$ relations has a cofinal branch.
\end{theorem}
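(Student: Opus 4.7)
I propose to adapt the proof of Theorem \ref{strongsystemthm} to the present context, with $\bb{S}_m * \dot{\bb{B}}$ playing the role formerly played by $\bb{S}_m$ and the preceding lemma of this section (granting the narrow system property in $V^{\bb{S}_m * \dot{\bb{B}}}$) replacing Lemma \ref{collapselemma}. Suppose for contradiction that in the extension by $\bb{P} * \dot{\bb{B}}$ there is a strong $\lambda$-system with $\kappa_{m-1}^{+2}$ relations and no cofinal branch; let $(p,\dot{t}) \leq (s^*, \dot{t}^*)$ and a name $\dot{S}$ of the form $\langle \lambda \times \mu, \{\dot{R}_\eta \mid \eta < \nu\}\rangle$ with $\nu \leq \kappa_{m-1}^{+2}$ witness this. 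By lengthening the Prikry stem we may assume $\ell(p) = m$. Let $G$ be $\bb{S}_m$-generic with $S(p) \in G$, let $H$ be $\bb{B}$-generic over $V[G]$ containing the realization of $\dot{t}$, and set $W := V[G][H]$. Because $\bb{S}_m * \dot{\bb{B}}$ is $\kappa_m$-directed closed, $\kappa_m$ remains supercompact in $W$, and $\lambda$ has the narrow system property there.

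Define $(\bb{P}*\dot{\bb{B}})'$ in $W$ to consist of all $(q,\dot{u}) \leq (p,\dot{t})$ whose Prikry $S$-part is compatible with $G$ and whose $\bb{B}$-component is forced into $H$. As in the proof of Theorem \ref{strongsystemthm}, forcing with $(\bb{P}*\dot{\bb{B}})'$ over $W$ adds a $\bb{P}*\dot{\bb{B}}$-generic over $V$ extending $(p,\dot{t})$, and $(\bb{P}*\dot{\bb{B}})'$ has the $\lambda$-c.c.\ because any two conditions with the same Prikry lower part and compatible $\bb{B}$-data are compatible. Now prove the analog of Lemma \ref{lengthlemma}: using a $\lambda$-supercompactness embedding $j : W \to M$ for $\kappa_m$, extend $j(p,\dot{t})$ to some $r \in j((\bb{P}*\dot{\bb{B}})')$ with $\alpha^r_m = \kappa_m$; after forcing below $r$ and realizing $j(\dot{S})$ as a $j(\lambda)$-system, collect for each $\beta < \lambda$ a condition $(q_\beta, \dot{u}_\beta)$ in the generic and data $(\gamma^*_\beta, \eta_\beta)$ with $(j(\beta), \gamma^*_\beta) <_{j(\dot{R}_{\eta_\beta})} (\sup j``\lambda, 0)$. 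Pigeonhole on $\eta$, the Prikry index $n$, the length $k$, and the initial $m$-many coordinates of the lower part, and then use the $\lambda$-closure of $M$ together with the $\kappa_m$-directed closure of $\bb{B}$ and of the upper Levy factors to aggregate the remaining upper $f$-, $g$-parts and $\bb{B}$-components into a single lower part $a^*$ on an unbounded $I \subseteq \lambda$. By elementarity, for all $\beta_0 < \beta_1$ in $I$ there is $(q,\dot{u}) \in (\bb{P}*\dot{\bb{B}})'$ of length $k$ with $a(q) = a^*$ forcing $(\beta_0, \gamma_0) <_{\dot{R}_\eta} (\beta_1, \gamma_1)$ for some $\gamma_0, \gamma_1 < \kappa_n$.

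With this in hand, the recursive construction of Theorem \ref{strongsystemthm} carries over: at each stage $\ell$ form a narrow $\lambda$-system parameterizing possible extensions at coordinate $\ell$, apply the narrow system property in $W$ to extract a cofinal sub-branch, and use it to define $(A^\beta_\ell, F^\beta_\ell, g^\beta_{\ell+1})$ together with matching $\bb{B}$-data on that branch, padding the remaining $\beta$ as before. A density argument then yields a $(\bb{P}*\dot{\bb{B}})'$-generic $H'$ over $W$ containing cofinally many of the $q_\beta$, so the realization of $\dot{S}$ in $W[H']$ acquires a cofinal branch. To finish, prove the appropriate analog of Lemma \ref{cclemma} for the quotients of $\bb{S}_m * \dot{\bb{B}}$ over the induced $\bb{S}^*$-generic, so that Lemma \ref{approxlemma} delivers the $\lambda$-approximation property, and apply Lemma \ref{approxbranchprop} to place the branch in the $\bb{P}*\dot{\bb{B}}$-extension below $(p,\dot{t})$, contradicting the assumption on $\dot{S}$.

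The principal obstacle will be the final step: $\bb{B}$ is $\lambda$-distributive rather than $\lambda$-c.c., so the analog of Lemma \ref{cclemma} is not quite immediate, and one should combine the closure properties of $\bb{B}$ in the relevant quotients with an Easton-lemma argument in the spirit of Lemma \ref{distributivelemma} to secure the $\lambda$-c.c.\ of the required two-step product; here the hypothesis $\nu \leq \kappa_{m-1}^{+2}$, which lies below every collapse factor in $\bb{S}_m$, is what allows the closure and chain-condition bookkeeping to line up. A secondary subtlety is the simultaneous stabilization of Prikry lower parts and $\bb{B}$-components in the ultrapower during the analog of Lemma \ref{lengthlemma}, which leverages the $\kappa_m$-directed closure of $\bb{B}$ together with the $\lambda$-closure of $M$.
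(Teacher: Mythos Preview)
Your outline carries $\bb{B}$ along as part of the conditions throughout the Prikry analysis, and this creates real difficulties in two places. First, in your analog of Lemma~\ref{lengthlemma} you propose to ``aggregate the $\bb{B}$-components into a single lower part $a^*$'' using the $\kappa_m$-directed closure of $\bb{B}$. But there are $\lambda$-many $\bb{B}$-components to aggregate, and $\bb{B}$ is only $\kappa_m$-directed closed and $(\mu+1)$-strategically closed, not $\lambda$-directed closed; the argument for the $f$- and $g$-parts works precisely because those live in $\lambda^+$-directed closed collapses, and $\bb{B}$ does not share that property. Second, your proposed analog of Lemma~\ref{cclemma} for ``quotients of $\bb{S}_m * \dot{\bb{B}}$ over the induced $\bb{S}^*$-generic'' cannot yield the $\lambda$-c.c.: $\bb{B}$ has $\lambda$-sized antichains, so no product involving a quotient of $\bb{S}_m * \dot{\bb{B}}$ will be $\lambda$-c.c., and no Easton-lemma manipulation changes that.

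The paper avoids both problems by separating $\bb{B}$ from the Prikry forcing entirely. One fixes the $\bb{B}$-generic $B$ at the outset, works in $V[G*B]$ with the \emph{same} poset $\bb{P}'$ as in Theorem~\ref{strongsystemthm}, and reinterprets $\dot{S}$ as a $\bb{P}'$-name there. The analog of Lemma~\ref{lengthlemma} and the recursive construction then go through verbatim, since no $\bb{B}$-data appears in the conditions. For the final approximation step, the needed chain-condition lemma is that $\bb{P}^* \times \bb{S}_m/G^* \times \bb{S}_m/G^*$ is $\lambda$-c.c.\ in $V[G^* * B]$ --- i.e., the \emph{same} statement as Lemma~\ref{cclemma}, but evaluated after the additional $\bb{B}$-forcing --- and this holds because the proof of Lemma~\ref{cclemma} (Lemma~4.5 of \cite{fontanella}) goes through unchanged when $\bb{S}^*$ is replaced by $\bb{S}^* * \dot{\bb{A}}$ for any $(\mu+1)$-strategically closed $\dot{\bb{A}}$. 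No Easton-type argument is needed. Incidentally, the bound $\nu \leq \kappa_{m-1}^{+2}$ enters not through chain-condition bookkeeping but simply to ensure $\nu < \kappa_{\ell(p)} = \kappa_m$, matching the hypothesis used in the supercompactness argument of Lemma~\ref{lengthlemma}.
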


\begin{proof}
	Suppose not. Then there is a $\bb{P} * \dot{\bb{B}}$-name $\dot{S} = \langle \lambda \times \mu, \{\dot{R}_\eta \mid \eta < \kappa_{m-1}^{+2}\} \rangle$ such that $\Vdash_{\bb{P} * \dot{\bb{B}}} ``\dot{S}$ is a strong $\lambda$-system with no cofinal branch."

	Let $G * B$ be $\bb{S}_m * \dot{\bb{B}}$-generic over $V$ with $(s^*, \dot{t}^*) \in G * B$. Let $p \in \bb{P}$ be such that $\ell(p) = m$ and $S(p) = s^*$. As in the previous section, let $G^*$ be the $\bb{S}^*$-generic filter induced by $G$, let $\bb{P}' = \{q \leq p \mid S(q) \in G\}$, and let $\bb{P}^* = \{q \leq p \mid [S(q)] \in G^*\}$.

	\begin{lemma} \label{cclemma2}
		In $V[G^* * B]$, $\bb{P}^* \times \bb{S}_m/G^* \times \bb{S}_m/G^*$ has the $\lambda$-c.c.
	\end{lemma}

	\begin{proof}
		An inspection of the proof of Lemma \ref{cclemma} (which is found as Lemma 4.5 in \cite{fontanella}) reveals that the same proof works when $\bb{S}^*$ is replaced by $\bb{S^*} * \dot{\bb{A}}$ for any $\bb{S^*}$-name $\dot{\bb{A}}$ for a $\mu + 1$-strategically closed forcing. Because $\dot{\bb{B}}$ is forced to be $\mu+1$-strategically closed, the proof goes through.
	\end{proof}

	Now work in $V[G * B]$ and repeat the proof of Theorem \ref{strongsystemthm}, beginning at Lemma \ref{lengthlemma} and using Lemma \ref{cclemma2} in place of Lemma \ref{cclemma}.
\end{proof}

It is clear that, in any extension by $\bb{P} * \dot{\bb{B}}$, $\square^{\geq \kappa_m}_{\mu, < \mu}$ holds. In particular, $\lambda$ fails to satisfy the robust tree property. Since our choice of $m < \omega$ was arbitrary, we have shown the following.

\begin{corollary}
	Suppose there are infinitely many supercompact cardinals, and let $\alpha < \omega^2$. Then there is a forcing extension in which every strong $\aleph_{\omega^2 + 1}$-system with $\aleph_\alpha$ conditions has a cofinal branch but in which $\aleph_{\omega^2 + 1}$ fails to have the robust tree property.
\end{corollary}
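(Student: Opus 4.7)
The proof is a direct application of the previous theorem combined with the square-to-special-tree machinery already developed. Fix $\alpha < \omega^2$ and write $\alpha = \omega \cdot k + \ell$ with $k, \ell < \omega$. In $V^{\bb{P} * \dot{\bb{B}}}$ we have $\kappa_m = \aleph_{\omega \cdot (m+1) + 3}$, so for any $m \geq k + 1$ we have $\kappa_{m-1}^{+2} = \aleph_{\omega \cdot m + 5} \geq \aleph_\alpha$. Fix such an $m$ and apply the previous theorem with this choice. The resulting extension then has the property that every strong $\lambda$-system with $\kappa_{m-1}^{+2}$ relations, and therefore (since a system with fewer relations is a special case) every strong $\lambda$-system with $\aleph_\alpha$ relations, has a cofinal branch.

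It remains to check that $\lambda$ fails to satisfy the robust tree property in the same extension. As noted in the paragraph preceding the corollary, $\bb{B} = \bb{B}(\kappa_m, \mu, \mu)$ forces $\square^{\geq \kappa_m}_{\mu, <\mu}$, and hence $\square^{\geq \kappa_m}_{\mu, \mu}$, to hold in $V^{\bb{P} * \dot{\bb{B}}}$. Let $\kappa$ denote the cardinal predecessor of $\kappa_m$ in the extension, so that $\kappa^+ = \kappa_m$. By the earlier proposition characterizing $\square^{\geq \kappa^+}_{\mu, \mu}$, there is a poset $\bb{Q}$ with $|\bb{Q}| \leq \kappa$ that forces $\square^*_\mu$ to hold. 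In the resulting further extension there is a special $\mu^+$-Aronszajn tree, and in particular the tree property at $\lambda$ fails. Since $|\bb{Q}|^+ \leq \kappa_m < \lambda$, this shows that $\lambda$ does not satisfy the robust tree property in $V^{\bb{P} * \dot{\bb{B}}}$.

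There is no serious obstacle: the corollary is a repackaging of the previous theorem and the fact that $\square^{\geq \kappa_m}_{\mu, \mu}$ produces special $\mu^+$-Aronszajn trees after a small collapse. The only arithmetic to verify is that the indices $\omega \cdot m + 5$ are cofinal in $\omega^2$ as $m$ ranges through $\omega$, which makes $\kappa_{m-1}^{+2}$ large enough to accommodate any prescribed $\aleph_\alpha$ with $\alpha < \omega^2$.
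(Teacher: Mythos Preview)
Your proof is correct and follows exactly the approach the paper intends: the paper's justification is simply the sentence preceding the corollary, noting that $\square^{\geq \kappa_m}_{\mu, <\mu}$ holds in the extension (hence the robust tree property fails) and that $m$ was arbitrary. You have merely made explicit the cardinal arithmetic needed to dominate a given $\aleph_\alpha$ and the invocation of the $\square^{\geq \kappa^+}_{\mu,\mu}$ characterization from Section~\ref{weakSquareSect} to witness the failure of the robust tree property.
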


We end with three open questions.

\begin{question}
	Can $\aleph_{\omega + 1}$ consistently satisfy the strong system property?
\end{question}

\begin{question} \label{question2}
	Suppose $\lambda$ is a regular uncountable cardinal and $\lambda$ satisfies the robust tree property. Must $\lambda$ satisfy the strong system property?
\end{question}

\begin{question} \label{question3}
	Suppose $\lambda$ is a regular uncountable cardinal and $\lambda$ satisfies the tree property. Must it be the case that every strong $\lambda$-system with only countably many relations has a cofinal branch?
\end{question}

Note that a `Yes' answer to Question \ref{question3} would also entail a `Yes' answer to Question \ref{question2}.

\bibliography{strong_systems}

\begin{thebibliography}{10}

\bibitem{ac}
Arthur~W Apter and James Cummings.
\newblock A global version of a theorem of {B}en-{D}avid and {M}agidor.
\newblock {\em Annals of Pure and Applied Logic}, 102(3):199--222, 2000.

\bibitem{cfm}
James Cummings, Matthew Foreman, and Menachem Magidor.
\newblock Squares, scales and stationary reflection.
\newblock {\em Journal of Mathematical Logic}, 1(01):35--98, 2001.

\bibitem{reflection}
James Cummings and Chris Lambie-Hanson.
\newblock Bounded stationary reflection.
\newblock {\em Proceedings of the American Mathematical Society}, to appear.

\bibitem{fontanella}
Laura Fontanella and Menachem Magidor.
\newblock Reflection of stationary sets and the tree property at the successor
  of a singular cardinal.
\newblock Preprint.

\bibitem{jech}
T.~Jech.
\newblock Set theory, the third millennium, revised and expanded ed.
\newblock {\em Springer Monographs in Mathematics, Springer-Verlag, Berlin},
  2003.

\bibitem{reflection2}
Chris Lambie-Hanson.
\newblock Bounded stationary reflection {II}.
\newblock Preprint.

\bibitem{narrowsystems}
Chris Lambie-Hanson.
\newblock Squares and narrow systems.
\newblock Preprint.

\bibitem{levysolovay}
Azriel Levy and Robert~M Solovay.
\newblock Measurable cardinals and the continuum hypothesis.
\newblock {\em Israel Journal of Mathematics}, 5(4):234--248, 1967.

\bibitem{free}
Menachem Magidor and Saharon Shelah.
\newblock When does almost free imply free?(for groups, transversals, etc.).
\newblock {\em Journal of the American Mathematical Society}, pages 769--830,
  1994.

\bibitem{magidorshelah}
Menachem Magidor and Saharon Shelah.
\newblock The tree property at successors of singular cardinals.
\newblock {\em Archive for Mathematical Logic}, 35(5-6):385--404, 1996.

\bibitem{neeman}
Itay Neeman.
\newblock The tree property up to $\aleph_{\omega+1}$.
\newblock {\em The Journal of Symbolic Logic}, 79(2):429--459, 2014.

\bibitem{sinapova}
Dima Sinapova.
\newblock The tree property and the failure of the singular cardinal hypothesis
  at $\aleph_{\omega}^2$.
\newblock {\em Journal of Symbolic Logic}, 77(3):934--946, 2012.

\bibitem{unger}
Spencer Unger.
\newblock Aronszajn trees and the successors of a singular cardinal.
\newblock {\em Archive for Mathematical Logic}, 52(5-6):483--496, 2013.

\end{thebibliography}
\bibliographystyle{plain}

\end{document}